\documentclass[12pt]{article}
\usepackage[utf8]{inputenc}
\usepackage{amssymb,amsmath,amsfonts,eucal,mathrsfs,amsthm} 
\usepackage[colorinlistoftodos]{todonotes}
\usepackage[allcolors=blue]{hyperref}
\usepackage{empheq}
\usepackage{bm}
\newtheorem{theorem}{Theorem}
\newtheorem{proposition}[theorem]{Proposition}
\newtheorem{lemma}[theorem]{Lemma}

\newtheorem{corollary}[theorem]{Corollary}

\newtheorem{remark}[theorem]{Remark}

\newtheorem{example}[theorem]{Example}
\newtheorem{examples}[theorem]{Examples}
\newtheorem{definition}[theorem]{Definition}
\theoremstyle{definition}
\newcommand{\R}{\mathbb{R}}
\newcommand{\Q}{\mathbb{Q}}
\newcommand{\Sf}{\mathbb{S}}

\newcommand{\Hy}{\mathbb{H}}
\newcommand{\spa}{\mbox{span}}

\newcommand{\rank}{\mbox{rank }}

\newcommand{\grad}{\mbox{grad\,}}

\newcommand{\nab}{\tilde\nabla}

\newcommand{\End}{\mbox{End}}
\newcommand{\Hom}{\mbox{Hom}}

\newcommand{\Les}{\mathbb{L}}
\def\<{{\langle}}
\def\>{{\rangle}}

\def\a{\alpha}

\def\be{\begin{equation} }
\def\ee{\end{equation} }

\def\proof{\noindent{\it Proof:  }}
\def\qed{\ifhmode\unskip\nobreak\fi\ifmmode\ifinner
\else\hskip5 pt \fi\fi\hbox{\hskip5 pt \vrule width4 pt
height6 pt  depth1.5 pt \hskip 1pt }}
\makeatletter
\newcommand{\subjclass}[2][]{\let\@oldtitle\@title
\gdef\@title{\@oldtitle\footnotetext{#1 
\emph{Mathematics Subject Classification:} #2}}}
\newcommand{\keywords}[1]{\let\@@oldtitle\@title
\gdef\@title{\@@oldtitle\footnotetext
{\emph{Key words and phrases.} #1.}}}
\makeatother

\title{}
\author{}
\date{}

\begin{document}
\title{Deformable hypersurfaces of $\mathbb{H}^k\times \mathbb{S}^{n-k+1}$}
\maketitle
\begin{center}
\author{M. I. Jimenez        and
        R. Tojeiro$^*$}  
        \footnote{Corresponding author}
      \footnote{This research was initiated while the first author was supported by CAPES-PNPD Grant 88887.469213/2019-00 and was finished under the support of Fapesp Grant 2022/05321-9. The second author was partially supported by Fapesp grant 2022/16097-2 and CNPq grant 307016/2021-8.\\
      Data availability statement: Not applicable.}
\end{center}
\date{}

\begin{abstract}

We investigate the deformable hypersurfaces of the conformally flat Riemannian products $\mathbb{H}^k\times \mathbb{S}^{n-k+1}$, $2\leq k\leq n-1$,  of hyperbolic space and a sphere with constant sectional curvatures $-1$ and $1$, respectively.
   
\end{abstract}

\noindent \emph{2020 Mathematics Subject Classification:} 53 B25, 53C40.\vspace{2ex}

\noindent \emph{Key words and phrases:} {\small {\em Deformable hypersurfaces, isometric deformation problem, conformally flat Riemannian products. }}

\date{}
\maketitle

\section{Introduction}

  A basic problem in Submanifold theory is to investigate the uniqueness of an isometric immersion $f\colon M^n\to \tilde{M}^{n+p}$ of a Riemannian manifold $M^n$ of dimension $n$ into another  Riemannian manifold $\tilde{M}^{n+p}$ of dimension $n+p$. Clearly, since $\tau\circ f$ is also an isometric immersion for any isometry  $\tau\colon \tilde{M}^{n+p}\to \tilde{M}^{n+p}$, uniqueness should be understood up to isometries of $\tilde{M}^{n+p}$. The usual terminology for uniqueness in this sense is \emph{isometric rigidity}, or simply, \emph{rigidity}. If an isometric immersion $f\colon M^n\to \tilde{M}^{n+p}$ is not rigid on any open subset of $M^n$, it is said to be \emph{isometrically deformable}, or simply, \emph{deformable}, and any isometric immersion $g\colon M^n\to \tilde{M}^{n+p}$ that is not given as $g=\tau\circ f$ for some isometry  $\tau\colon \tilde{M}^{n+p}\to \tilde{M}^{n+p}$ on any open subset of $M^n$ is said to be an \emph{isometric deformation} of $f$.
  
  Until now, the preceding \emph{isometric deformation problem} for isometric immersions $f\colon M^n\to \tilde{M}^{n+p}$, $n\geq 3$, has been studied only when the ambient manifold $\tilde{M}^{n+p}$ is a space form, starting with the pioneering works of Sbrana \cite{sb} and Cartan \cite{ca} for the case in which $p=1$ and the ambient manifold is Euclidean space $\mathbb{R}^{n+1}$; the case in which the ambient manifold is the sphere and the hyperbolic space was addressed in \cite{DFT2} (see \cite{DT} for an account of several known results on the isometric deformation problem for submanifolds of space forms; see also \cite{Gr} and \cite{Je} for an approach to the problem based on Cartan's method of moving frames). This is mostly because a ``Fundamental theorem of submanifolds" was only available until recently for submanifolds of space forms. After such a theorem was established, in particular, for submanifolds with arbitrary dimension and codimension of a product of space forms in \cite{LTV} and \cite{Ko}, extending previous results in \cite{Da} for hypersurfaces of $\mathbb{S}^n\times \mathbb{R}$ and $\mathbb{H}^n\times \mathbb{R}$, it became a natural problem to study the isometric deformation problem in these spaces. 
  
  In this article, we initiate the study of deformable hypersurfaces of products of space forms. More specifically,
  we investigate the deformable hypersurfaces of the conformally flat Riemannian products $\mathbb{H}^k\times \mathbb{S}^{n-k+1}$, $2\leq k\leq n-1$,  of hyperbolic space and a sphere with constant sectional curvatures $-1$ and $1$, respectively.
  
  In terms of the model of $\mathbb{H}^k\times \mathbb{S}^{n-k+1}$ as a submanifold of $\mathbb{L}^{k+1}\times \mathbb{R}^{n-k+2}=\mathbb{L}^{n+3}$, where $\mathbb{L}^{m}$ stands for Lorentz space of dimension $m$, a class of examples of deformable hypersurfaces arises by considering pairs of hypersurfaces   $f, g\colon M^n\to \mathbb{H}^k\times \mathbb{S}^{n-k+1}$ that are \emph{weakly congruent}. This means that  $g=\Phi \circ f$ for some linear isometry   $\Phi\colon \mathbb{L}^{n+3}\to \mathbb{L}^{n+3}$ that does not preserve $\mathbb{L}^{k+1}$ and $\mathbb{R}^{n-k+2}$, and therefore does not restrict itself to an isometry of $\mathbb{H}^k\times \mathbb{S}^{n-k+1}$. Then $f$ is a deformable hypersurface, with $g$ an isometric deformation of $f$ (and conversely), which we call a \emph{weakly congruent deformation} of $f$.

     We say that a hypersurface $f\colon M^n\to \mathbb{H}^k\times \mathbb{S}^{n-k+1}$ is \emph{truly deformable} if it admits an isometric deformation $g\colon M^n\to \mathbb{H}^k\times \mathbb{S}^{n-k+1}$ that is not weakly congruent to $f$ in any open subset of $M^n$.  Any such isometric deformation is said to be a \emph{true deformation} of $f$.    The simplest examples of truly deformable hypersurfaces of $\mathbb{H}^k\times \mathbb{S}^{n-k+1}$ are products $f=f_1\times i_2\colon N^{k-1}\times\Sf^{n-k+1}\to \Hy^{k}\times\Sf^{n-k+1}$ or $f=i_1\times f_2\colon \Hy^{k}\times N^{n-k}\to \Hy^{k}\times\Sf^{n-k+1}$, where $f_1\colon N^{k-1}\to \Hy^k$ and $f_2\colon N^{n-k}\to \Sf^{n-k+1}$ are deformable hypersurfaces of $\Hy^{k}$ and $\Sf^{n-k+1}$, respectively, and  $i_1$ and $i_2$ are the respective identity maps. 
  For $f=f_1\times i_2\colon N^{k-1}\times\Sf^{n-k+1}\to \Hy^{k}\times\Sf^{n-k+1}$, for instance, if $g_1\colon N^{k-1}\to \Hy^k$ is an isometric deformation of $f_1$, then $g=g_1\times i_2$ is a true deformation of $f$. Similarly for $f=i_1\times f_2\colon \Hy^{k}\times N^{n-k}\to \Hy^{k}\times\Sf^{n-k+1}$.

  There exists an explicit conformal diffeomorphism $\Psi$ between $\Hy^{k}\times\Sf^{n-k+1}$ and the complement $\R^{n+1}\setminus\R^{k-1}$ of a subspace
  $\R^{k-1}\subset \R^{n+1}$. Hence, a pair of isometric hypersurfaces in $\Hy^{k}\times\Sf^{n-k+1}$ gives rise to a pair of conformal hypersurfaces in $(\R^{n+1}\setminus\R^{k-1}, g_0)$, where $g_0$ is the Euclidean metric. The isometries of $(\R^{n+1}\setminus\R^{k-1}, (\Psi^{-1})^*g)$, where $g$ is the metric of $\Hy^{k}\times\Sf^{n-k+1}$, are the Moebius transformations of $\R^{n+1}$ that leave $\R^{k-1}$ invariant (see, e.g., Theorem $1$ of \cite{JT}). 
  Therefore, two hypersurfaces of the conformal model $(\R^{n+1}\setminus\R^{k-1}, (\Psi^{-1})^*g)$ of $\Hy^{k}\times\Sf^{n-k+1}$  are congruent if and only if they differ by a Moebius transformation of $\R^{n+1}$ that leaves $\R^{k-1}$ invariant. They are weakly congruent deformations of one another if and only if they differ by an arbitrary Moebius transformation of $\R^{n+1}$.
  On the other hand, the  truly deformable hypersurfaces of $\Hy^{k}\times\Sf^{n-k+1}$, when regarded as hypersurfaces of $(\R^{n+1}\setminus\R^{k-1}, (\Psi^{-1})^*g)$,  are among the conformally deformable hypersurfaces of $\R^{n+1}$ classified by Cartan in \cite{ca2} (see also \cite{DT1} and Chapter $17$ of~\cite{DT}).  These are the hypersurfaces $f\colon M^n\to\R^{n+1}$ that admit a nontrivial \emph{conformal deformation} $\tilde f\colon M^n\to\R^{n+1}$, that is,  an immersion such that $f$ and $\tilde f$ induce conformal metrics on $M^n$ and do not differ by a Moebius transformation of $\R^{n+1}$ on any open subset of $M^n$.

According to Cartan's classification, besides conformally flat hypersurfaces, which have a principal curvature with multiplicity greater than or equal to $n-1$ and are highly conformally deformable, the remaining ones fall into one of the following classes:
\begin{itemize}
\item[(i)] \emph{conformally surface-like hypersurfaces}, that is, those that differ by a Moebius transformation of $\mathbb{R}^{n+1}$ from cylinders and rotational hypersurfaces over surfaces in $\R^3$, or from cylinders over three-dimensional 
hypersurfaces of $\R^4$ that are cones over surfaces in $\Sf^3$; 
\item[(ii)]  \emph{conformally ruled hypersurfaces}, that is, hypersurfaces $f\colon M^n\to\R^{n+1}$ for which $M^n$ carries an integrable  distribution of rank $(n-1)$
whose leaves are mapped by $f$ into umbilical submanifolds of $\R^{n+1}$;
\item[(iii)] hypersurfaces that admit a  non-trivial \emph{conformal variation}  $F\colon (-\epsilon,\epsilon)\times M^n \to \mathbb{R}^{n+1}$, that is,  a smooth map 
 such that, for any $t \in (-\epsilon,\epsilon)$, the map  $f_t = F(t; \cdot)$,  with $f_0 = f$, is a non-trivial \emph{conformal deformation} of $f$;
\item[(iv)]  hypersurfaces that admit a single non-trivial conformal deformation.
\end{itemize}

   Our main result shows that, except for the simple examples of truly isometrically deformable hypersurfaces of $\Hy^{k}\times\Sf^{n-k+1}$ mentioned above, all truly deformable hypersurfaces of dimension $n\geq 6$ of $\mathbb{H}^k\times \mathbb{S}^{n-k+1}$, $2\leq k\leq n-1$, when regarded as hypersurfaces of the conformal model $(\R^{n+1}\setminus\R^{k-1}, (\Psi^{-1})^*g)$  of $\Hy^{k}\times\Sf^{n-k+1}$, are either some special conformally flat hypersurfaces or particular examples in the first two families in Cartan's classification. 
   
      Among them are the ruled and conformally ruled hypersurfaces. We say that a hypersurface $f\colon M^n\to \mathbb{H}^k\times \mathbb{S}^{n-k+1}$ is \emph{ruled} (respectively, \emph{conformally ruled}) if $M^n$ admits a totally geodesic (respectively, umbilical) distribution of rank $(n-1)$ such that the restriction of $f$ to each of its leaves, called \emph{rulings}, is totally geodesic (respectively, umbilical). 
      
   All remaining truly deformable hypersurfaces of $\mathbb{H}^k\times \mathbb{S}^{n-k+1}$ are certain rotational or doubly rotational hypersurfaces defined below.
 
  First, let $\Psi$ be an isometry of a warped product $(\mathbb{H}^{k}\times \mathbb{S}^{n-k-s+1})\times_{\sigma} \mathbb{S}^{s}$
   onto an open subset of $\Hy^k\times \Sf^{n-k+1}$, called a \emph{warped product representation of $\Hy^k\times \Sf^{n-k+1}$} (see Section \ref{Sec:Rotational}). Let
 $f_0\colon M_0^{n-s}\to \mathbb{H}^{k}\times \mathbb{S}^{n-k-s+1}$ be a hypersurface and let $i_2\colon \mathbb{S}^{s}\to \mathbb{S}^{s}$ be the identity map.  
 Then the map 
 $$
 f\colon M_0^{n-s}\times_{\rho} \mathbb{S}^{s}\to \Hy^k\times \Sf^{n-k+1},\,\,\,\,\,f=\Psi\circ (f_0\times  i_2),
 $$
 where  $\rho=\sigma\circ f_0$,  is an isometric immersion that we call a \emph{rotational hypersurface of type $I$ with $f_0$ as profile}.
 
 Similarly, let $\Psi$ be an isometry of $(\mathbb{H}^{k-r}\times \mathbb{S}^{n-k+1})\times_{\sigma} \mathbb{Q}_\epsilon^{r}$
   onto an open subset of $\Hy^k\times \Sf^{n-k+1}$. Here, $\mathbb{Q}_\epsilon^{r}$ denotes $\mathbb{S}^r$, $\mathbb{R}^r$ or $\mathbb{H}^r$, according to whether $\epsilon =1$, $0$ or $-1$, respectively.
   Let $f_0\colon M_0^{n-r}\to \mathbb{H}^{k-r}\times \mathbb{S}^{n-k+1}$ be a hypersurface and let $i_1\colon \mathbb{Q}_\epsilon^{r}\to \mathbb{Q}_\epsilon^{r}$ be the identity map.
   Then the map 
   $$
 f\colon M_0^{n-r}\times_\rho\mathbb{H}^{r+s}\to \Hy^k\times \Sf^{n-k+1},\,\,\,\,\,f=\Psi\circ (f_0 \times i_1),
 $$
 where  $\rho=\sigma\circ f_0$, is also an isometric immersion,  called a \emph{rotational hypersurface of type $(II)$ with $f_0$ as profile}. 
 
Finally, let $\Psi$ be an isometry of $(\mathbb{H}^{k-r}\times \mathbb{S}^{n-k-s+1})\times_{\sigma_1} \mathbb{Q}_\epsilon^{r}\times_{\sigma_2} \mathbb{S}^{s}$
   onto an open subset of $\Hy^k\times \Sf^{n-k+1}$. Let
 $f_0\colon M_0^{n-r-s}\to \mathbb{H}^{k-r}\times \mathbb{S}^{n-k-s+ 1}$ be a hypersurface and let $i_1\colon \mathbb{Q}_\epsilon^{r}\to \mathbb{Q}_\epsilon^{r}$ and $i_2\colon \mathbb{S}^{s}\to \mathbb{S}^{s}$ be the identity maps.  Then the map 
 $$f\colon M_0^{n-r-s}\times_{\rho_1}\mathbb{Q}_\epsilon^{r}\times_{\rho_2}\Sf^s\to \Hy^k\times \Sf^{n-k+1},\,\,\,\,\,f=\Psi\circ (f_0 \times i_1\times i_2),$$
     where  $\rho_i=\sigma\circ f_0$ for $i=1,2$, is an isometric immersion that we call a \emph{doubly rotational hypersurface with $f_0$ as profile. }
     \vspace{1ex}

   We can now give a precise statement of the main result of this article.

   \begin{theorem}\label{main}
Let $f\colon M^n\to \Hy^k\times \Sf^{n-k+1}$, $n\geq 6$, $2\leq k\leq n-1$, be a truly deformable hypersurface. Then there exists an open and dense subset of $M^n$ where one of the following holds locally 
\begin{itemize}
\item[(i)] $f=f_1\times i_2\colon N^{k-1}\times\Sf^{n-k+1}\to \Hy^{k}\times\Sf^{n-k+1}$, where $f_1\colon N^{k-1}\to \Hy^k$ is a deformable hypersurface and  $i_2$ is the  identity map;
\item[(ii)] $f=i_1\times f_2\colon \Hy^{k}\times N^{n-k}\to \Hy^{k}\times\Sf^{n-k+1}$, where $f_2\colon N^{n-k}\to \Sf^{n-k+1}$ is a deformable hypersurface and  $i_1$ is the identity map;
\item[(iii)] $f$ is ruled;
\item[(iv)]  $f$ is conformally ruled;
\item[(v)]  $k=2$ (respectively, $k=n-1$) and $f$ is a rotational hypersurface  of type $(I)$ (respectively, type $(II)$),  with a surface  $f_0\colon M_0^2\to \mathbb{H}^{2}\times \mathbb{S}^{1}$  (respectively, $f_0\colon M_0^2\to \mathbb{H}^{1}\times \mathbb{S}^{2}$) as profile;
\item[(vi )] $2\leq k\leq 3$ (respectively, $n-2\leq k\leq n-1$) and $f$ is a rotational hypersurface  of type $(I)$ (respectively, type $(II)$) with a three-dimensional hypersurface $f_0\colon M_0^3\to \mathbb{H}^\ell\times \mathbb{S}^{4-\ell}$, $\ell\in \{2,3\}$ (respectively, $\ell\in \{1,2\}$), as profile;
\item[(vii)] $f$ is  a doubly rotational hypersurface whose profile is a surface $f_0\colon M_0^2\to \mathbb{H}^\ell\times \mathbb{S}^{3-\ell}$, $\ell\in \{1,2\}$.
\item[(viii)] $f$ is a doubly rotational hypersurface whose profile is a three-dimensional hypersurface $f_0\colon M_0^3\to \mathbb{H}^\ell\times \mathbb{S}^{4-\ell}$, $\ell\in \{1,2,3\}$.
\end{itemize}
\end{theorem}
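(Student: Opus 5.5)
We would start from the Gauss and Codazzi equations of hypersurfaces of $\Hy^k\times\Sf^{n-k+1}$, given by the fundamental theorem of submanifolds of products of space forms \cite{LTV}. Write $\partial_t$ for the unit tangent field of the $\Hy^k$-factor direction decomposition: for a hypersurface $f$ with unit normal $N$, let $R=\pi_1 - \pi_2$ restricted to $f$ split as $R f_* X = f_* TX + \<X,\xi\>N$ type data, i.e.\ a symmetric tensor $T$ on $M^n$, a vector field $\xi$ and a function $\nu$. The Gauss equation then reads $R(X,Y)= \frac12\big(\text{curvature terms in } T\big) + AX\wedge AY$. If $g$ is an isometric deformation with data $(\tilde A,\tilde T,\tilde\xi,\tilde\nu)$, subtracting the two Gauss equations gives
\[
AX\wedge AY-\tilde AX\wedge\tilde AY=\text{(terms quadratic/linear in } T-\tilde T).
\]
The first step is to show that $T=\tilde T$ on an open dense subset unless $g$ is weakly congruent to $f$; this is where the hypothesis of true deformability enters, since a linear isometry of $\mathbb{L}^{n+3}$ mixing the factors is exactly what changes $T$ while preserving the induced metric. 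I would phrase this via the conformal model: regarding $f,g$ as conformal hypersurfaces of $\R^{n+1}$ via $\Psi$, they form a pair of conformally congruent-by-metric immersions, and Cartan's classification \cite{ca2} (in the form of Chapter 17 of \cite{DT}) applies, yielding that $f$ is either conformally flat, conformally surface-like, conformally ruled, or admits a conformal variation / single conformal deformation.

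The second step is to eliminate or refine each Cartan class using the extra structure that the conformal factor of $\Psi$ and the subspace $\R^{k-1}$ impose. Since the Moebius transformations relating $f,g$ need not preserve $\R^{k-1}$, but the deformation must be isometric for $(\Psi^{-1})^*g$, the conformal factors of $f$ and $g$ with respect to the Euclidean metric must coincide up to the pullback of the distance-type function to $\R^{k-1}$. For $n\ge 6$ the Cartan classes (iii) and (iv) have Moebius-invariant rank-type conditions (type number at most two in conformal sense) which I expect to force, together with matching of conformal factors, that the hypersurface is conformally ruled or has a principal distribution of rank $\ge n-2$; this gives (iii) or (iv). The conformally flat case (principal curvature of multiplicity $n-1$) should be handled via the umbilical leaves being mapped to umbilical submanifolds, again giving conformally ruled, or, when the umbilical leaves are slices, the product cases (i),(ii).

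The third step treats conformally surface-like hypersurfaces: cylinders, rotational hypersurfaces over surfaces in $\R^3$, and cylinders over cones in $\R^4$. Here one must determine when the Moebius-invariant structure (the umbilical spheres of the rotation/cylinder) is compatible with $\R^{k-1}$: the axes of rotation must be placed relative to $\R^{k-1}$, and translating back via $\Psi$ these become exactly the warped product representations of Section~\ref{Sec:Rotational}, yielding the rotational hypersurfaces of types (I),(II) and doubly rotational ones with two- or three-dimensional profiles, i.e.\ (v)--(viii). The dimension restrictions on $k$ come from requiring the profile space $\Hy^{k}\times\Sf^{n-k-s+1}$ (or analogue) to have dimension $3$ or $4$. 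The main obstacle I expect is this last bookkeeping step together with the first: proving that truly deformable forces a genuine conformal deformation in Cartan's sense (not a Moebius one) on an open dense set, and then classifying all relative positions of the surface-like structure with respect to $\R^{k-1}$; I would attack it by analyzing the Moebius-invariant umbilical foliation and showing its leaves' spheres must be orthogonal to or contain $\R^{k-1}$.
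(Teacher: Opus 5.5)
\textbf{There is a genuine gap.} Passing to the conformal model is legitimate, but it is not where the difficulty lies. Weak congruence is Moebius congruence in $\R^{n+1}$, so a true deformation $g$ makes $\Psi\circ g$ a nontrivial conformal deformation of $\Psi\circ f$. Since umbilicity of distributions and of submanifolds is conformally invariant, Cartan's conformally ruled class gives item (iv) at once.

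The gap is your second step, which is where all the content of the theorem sits. Conformal deformability discards the product structure, and you give no mechanism by which the isometric condition in $\Hy^k\times\Sf^{n-k+1}$ (your ``matching of conformal factors'') cuts down Cartan's classes (iii) and (iv). You only state an expectation, and its conclusion is a non sequitur: for $n\geq 5$ every conformally deformable hypersurface already has a principal curvature of multiplicity $\geq n-2$, and that yields neither (iii) nor (iv).

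The expectation is also aimed at the wrong target. Let $f_1\colon N^{k-1}\to\Hy^k$, $k\geq 4$, be a deformable hypersurface of rank two that is not ruled (e.g.\ of Sbrana--Cartan type). Then $f=f_1\times i_2$ is truly deformable and has $0$ as a principal curvature of multiplicity exactly $n-2$, so it is not conformally flat. An interlacing argument for $A_f=A_{f_1}\oplus 0$ shows that a conformal ruling would force $f_1$ to be ruled, so $f$ is not conformally ruled either. Hence the split items (i), (ii) cannot be extracted from the conformally flat class as you propose, and nothing in your scheme produces them.

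Your Step 1 claim that $R_f=R_g$ (your $T=\tilde T$) unless $g$ is weakly congruent to $f$ is unsupported. Whenever the common principal curvature $\lambda$ on the common eigenbundle $\Delta$ is nonzero, one has $R_f-R_g=-\lambda(A_f-A_g)\neq 0$. Your claim therefore amounts to excluding every $\lambda\neq 0$ configuration, namely the non-ruled conformally ruled case and all of (v)--(viii), which is far more than the theorem asserts.

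\textbf{The missing idea} is to stay in the product and use the full data $(A,R,S)$ of both immersions.
\begin{itemize}
\item The Gauss equation \eqref{Gauss} for $f$ and $g$ says exactly that $\beta=(\langle A_f\cdot,\cdot\rangle,\langle\psi_+\cdot,\cdot\rangle,\langle A_g\cdot,\cdot\rangle,\langle\psi_-\cdot,\cdot\rangle)$, with $\psi_\pm=\frac{1}{\sqrt2}(R_f-R_g\pm I)$, is a flat bilinear form into $\R^{2,2}$.
\item When $\nu_f\leq n-2$, $\beta$ is null precisely when $f$ and $g$ are weakly congruent (Proposition~\ref{prop:null} and Lemma~\ref{12imp3}). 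The cases $\nu_f\geq n-1$ or $S_f=0$ give the split items via Propositions~\ref{splits} and~\ref{split2}.
\item Since $\beta$ has trivial kernel, the structure lemma for flat bilinear forms into $\R^{2,2}$ gives a common eigenbundle $\Delta$ of $A_f$, $A_g$ and $R_f-R_g$ of rank $\geq n-2$.
\item For $n\geq 6$, the bounds $\dim\ker R\geq k-1$ and $\dim\ker(I-R)\geq n-k$ force $\Delta$ to meet $\ker R_f$ or $\ker(I-R_f)$. This yields $R_f|_\Delta=R_g|_\Delta$ and $\lambda(A_f-A_g)+(R_f-R_g)=0$ (Proposition~\ref{commoneig2}).
\end{itemize}

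This relation is then fed into \eqref{derR}, \eqref{derS} and \eqref{codazzi} for both immersions. These equations have no counterpart in the conformal picture, and they determine the geometry of the relevant distributions case by case:
\begin{itemize}
\item $\lambda=0$ gives a ruled hypersurface.
\item $\lambda\neq 0$, $\dim\Delta=n-2$, $S_f\neq S_g$ and $\rank(A_f-A_g)=1$ gives a conformally ruled hypersurface.
\item The remaining cases give a product net to which Hiepko's theorem and the N\"olker-type Proposition~\ref{propmulti} apply. The restrictions on $k$ and on the dimension of the profile come from whether $\Delta\cap\ker R_f$ or $\Delta\cap\ker(I-R_f)$ can be trivial.
\end{itemize}
Your Steps 2 and 3 would need a substitute for exactly this analysis; as written, they are only a program.
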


 The paper is organized as follows. In the next section, we recall from \cite{LTV} the basic equations of a hypersurface $f\colon\,M^n\to \Q_{c_1}^{k}\times \Q_{c_2}^{n-k+1}$ of a product of space forms with constant sectional curvatures $c_1$ and $c_2$, respectively. These are given in terms of its shape operator $A$, a symmetric endomorphism $R$, and a one-form $S$ on $M^n$. The data $(A,R,S)$ uniquely determine the hypersurface $f$ up to isometries of $\Q_{c_1}^{k}\times \Q_{c_2}^{n-k+1}$ (see Theorem \ref{exist} and Proposition~\ref{cong}). 
 
 In Section~\ref{Sec:Rotational} we define rotational and doubly rotational hypersurfaces of $\mathbb{H}^k\times \mathbb{S}^{n-k+1}$ and derive a N\"olker-type theorem characterizing them as hypersurfaces that are intrinsically warped products and satisfy some additional conditions (see Proposition \ref{propmulti}). 
 
 The notion of weak congruence for a pair of hypersurfaces of $\mathbb{H}^k\times \mathbb{S}^{n-k+1}$ is introduced in Section~\ref{Sec:weak},  and it is shown how the data associated with weakly congruent hypersurfaces are related (see Proposition \ref{congL}).

In Section \ref{Sec:flatbil}, we show that a pair of isometric immersions $f,g\colon M^n\to \mathbb{H}^k\times \mathbb{S}^{n-k+1}$, $2\leq k\leq n-1$, gives rise to a certain flat bilinear form $\beta$ (see Proposition \ref{prop:flat}). Here is where the assumptions that $k\geq 2$ and $k\leq n-1$ are needed: a different notion of weak congruence appears when $k=1$ or $k=n-1$. In those cases, a different flat bilinear form has to be used (the case $k=1$ is considered in a forthcoming work with S. Pinillos). Weak congruence of $f$ and $g$ is characterized in terms of $\beta$ (see Proposition~\ref{prop:null}).

In Section \ref{Sec:weakdef}, we show how to produce hypersurfaces in $\mathbb{H}^k\times \mathbb{S}^{n-k+1}$ that admit weakly congruent deformations (see Proposition~\ref{dif}). We prove that these include all umbilical nontotally geodesic hypersurfaces (see Proposition~\ref{defumb}), but we also construct an explicit two-parameter family of non-umbilical hypersurfaces that admit weakly congruent deformations (see Example \ref{nonumbilical}). 

Section~\ref{Sec:truly} is devoted to the investigation of the truly deformable hypersurfaces of $\mathbb{H}^k\times \mathbb{S}^{n-k+1}$, $2\leq k\leq n-1$. As a key step, using the results of Section~\ref{Sec:flatbil}, we first obtain a necessary condition for a hypersurface of $\mathbb{H}^k\times \mathbb{S}^{n-k+1}$, $n\geq 5$, $2\leq k\leq n-1$, to admit true deformations (see Proposition \ref{commoneig}). 
This already implies a Beez-Killing-Cartan-type theorem, providing a sufficient condition for a hypersurface of $\mathbb{H}^k\times \mathbb{S}^{n-k+1}$, $n\geq 5$, $2\leq k\leq n-1$, to admit no true deformations (see Corollary \ref{beez}). 
Then we show that a stronger necessary condition must be satisfied when $n\geq 6$.
Namely, such a hypersurface and a true deformation of it must share a common eigendistribution $\Delta$ of rank at least $n-2$ associated with a common principal curvature $\lambda$, restricted to which the respective endomorphisms $R$ coincide (see Proposition~\ref{commoneig2}). This leads to a case-by-case study according to the possible values of the rank of $\Delta$ and depending on whether $\lambda$ is identically zero or vanishes nowhere. The proof of Theorem~\ref{main} then follows by putting together the conclusions drawn in each possible case. 

Finally, in Section~\ref{Sec:final} we prove some partial results and pose some problems regarding the converse of Theorem \ref{main}.

\section{Preliminaries}

The basic equations of a submanifold with arbitrary dimension and codimension of a product of two space forms have been established in  \cite{LTV} (see also \cite{Ko}). In this section, we rewrite those equations in the special case of hypersurfaces, in which we are interested in this article.

 Let  $f\colon\,M^n\to \Q_{c_1}^{k}\times \Q_{c_2}^{n-k+1}$ be  an isometric immersion of a Riemannian manifold.
  Let $N$ be a (local) unit normal vector field along $f$, and let $A=A^f_N\in \Gamma(T^*M\otimes TM)$ be the shape operator of $f$ with respect to $N$. Let $\xi=\xi_f\in \mathfrak{X}(M)$, $t=t_f\in C^{\infty}(M)$ and $R=R_f\in \Gamma(\End(TM))$ be defined by
$$\pi_2N=f_*\xi+tN$$ 
and 
\begin{equation}\label{L}
\pi_2 f_*X=f_*RX+\<X, \xi\>N,
\end{equation}
where $\pi_2$ denotes both the projection onto
 $\Q_{c_2}^{n-k+1}$ and its derivative. From
$\pi_2^2=\pi_2$
it follows that
\be\label{eq:pi1}
R(I-R)X=\<X, \xi\>\xi
\ee
for all $X\in \mathfrak{X}(M)$,
\be\label{eq:pi2}
((1-t)I-R)\xi=0 \,\,\,\mbox{and}\,\,\,\,t(1-t)=\|\xi\|^2.
\end{equation}
It is convenient to introduce the one-form $S=S_f=\xi^\flat\in \Gamma(T^*M)$, that is, $S(X)=\<\xi, X\>$ for all $X\in \mathfrak{X}(M)$.  The definition of
$R$ in \eqref{L} implies that it is a symmetric endomorphism.  
It follows from \eqref{eq:pi1} that $\ker S$ can be pointwise  orthogonally decomposed as 
\be\label{kerS}
\ker S=\ker R\oplus \ker(I-R).
\ee 
 In Lemma 3.2 of \cite{MT} it was shown that these subspaces give rise to smooth subbundles of $TM$ in open subsets where their dimensions are constant.
Since $\dim \ker S(x)\geq n-1$ at any $x\in M^n$, either $S(x)=0$, in which case $0$ and $1$ are the only eigenvalues of $R(x)$, or there exists a unique further eigenvalue of $R(x)$, which by \eqref{eq:pi2} is equal to $r=r_f=1-t\in (0,1)$ and has $\mbox{span}\, \{\xi\}$ as its eigenspace. Notice that
    \begin{equation}\label{dimkerR}
    k-1\leq \dim \ker R\leq k \,\,\,\,\mbox{and}\,\,\,\,n-k\leq \dim \ker (I-R)\leq n-k+ 1.
    \end{equation}
Indeed, since $\ker R\subset \ker S$, it follows from \eqref{L} that  $X\in \ker R$ if and only if $\pi_{2}\circ f_{*}X=0$. 
Thus $f_*\ker R(x)= f_*T_xM\cap T_{\pi_{1}\circ f(x)}\mathbb{Q}_{c_1}^{k}$, where $\pi_1$ is the projection onto $\mathbb{Q}_{c_1}^{k}$, and since $f_*T_xM$ has codimension $1$ in $T_{f(x)}(\mathbb{Q}_{c_1}^{k}\times \mathbb{Q}_{c_2}^{n-k+1})$,  the inequalities $k-1\leq \dim \ker R\leq k$ follow. Similarly, one argues that $n-k\leq \dim \ker (I-R)\leq n-k+ 1$.

 We say that a hypersurface $f\colon M^{n}\rightarrow \mathbb{Q}_{c_1}^{k}\times \mathbb{Q}_{c_2}^{n-k+1}$ \emph{splits}  if either $M^n$ is (isometric to)  an open subset of $N^{k-1}\times \mathbb{Q}_{c_2}^{n-k+1}$ and $f=f_{1}\times i_{2}:N^{k-1}\times \mathbb{Q}_{c_2}^{n-k+1}\to \mathbb{Q}_{c_1}^{k}\times \mathbb{Q}_{c_2}^{n-k+1}$, or $M^n$ is (isometric to) an open subset of $\mathbb{Q}_{c_1}^{k}\times N^{n-k}$ and $f=i_{1}\times f_{2}\colon \mathbb{Q}_{c_1}^{k}\times N^{n-k}\to  \mathbb{Q}_{c_1}^{k}\times \mathbb{Q}_{c_2}^{n-k+1}$, where $f_{1}\colon N^{k-1}\rightarrow \mathbb{Q}_{c_1}^{k}$ and $f_{2}\colon N^{n-k}\rightarrow  \mathbb{Q}_{c_2}^{n-k+1}$ are hypersurfaces and $i_{1}$ and $i_{2}$ are the respective identity maps. 

The following fact is a consequence of Proposition $3.3$ in \cite{MT}.

\begin{proposition}\label{splits}
     For a hypersurface $f\colon M^{n}\rightarrow \mathbb{Q}_{c_1}^{k}\times \mathbb{Q}_{c_2}^{n-k+1}$, $2\leq k\leq n-1$, the following are equivalent:
     \begin{itemize}
         \item[(i)] $f$ splits locally;
         \item[(ii)] $S_{f}=0$;
         \item[(iii)] $\dim\ker R = k$ or $\dim\ker (I-R)=n-k+1$ everywhere.
     \end{itemize}
\end{proposition}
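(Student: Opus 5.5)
The plan is to prove the chain (i)$\Rightarrow$(ii)$\Rightarrow$(iii)$\Rightarrow$(i), using only the algebraic identities \eqref{eq:pi1}--\eqref{dimkerR} and the Codazzi-type equations for $R$ and $S$ from \cite{LTV}.

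\emph{(i)$\Rightarrow$(ii).} Suppose $f=f_1\times i_2$. Then a unit normal $N$ is tangent to $\Q_{c_1}^k$, so $\pi_2N=0$. Hence $\xi=0$, and therefore $S=0$. The case $f=i_1\times f_2$ is the same argument: there $N$ is tangent to $\Q_{c_2}^{n-k+1}$, so $\pi_2N=N$. This gives $\xi=0$ and $t=1$.

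\emph{(ii)$\Rightarrow$(iii).} If $S=0$, then by \eqref{eq:pi2} we get $t(1-t)=0$, so $t\in\{0,1\}$ is locally constant on connected open sets. By \eqref{kerS}, $TM=\ker R\oplus\ker(I-R)$.
\begin{itemize}
\item[$\bullet$] If $t=0$, then $N=\pi_1N$, so $f_*TM\supset T\Q_{c_2}^{n-k+1}$. This forces $\dim\ker(I-R)=n-k+1$.
\item[$\bullet$] If $t=1$, then $N$ is tangent to $\Q_{c_2}^{n-k+1}$, and the same reasoning gives $\dim\ker R=k$.
\end{itemize}

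\emph{(iii)$\Rightarrow$(i).} This is the substantive step, and it is essentially Proposition 3.3 of \cite{MT}, which I would reproduce as follows. Assume $\dim\ker(I-R)=n-k+1$. By \eqref{dimkerR}, $\dim\ker R\ge k-1$, so $\ker R\oplus\ker(I-R)=TM$. Consequently $S=0$, and $R$ is the orthogonal projection onto $E_1=\ker(I-R)$. The Codazzi-type equation for $R$ from \cite{LTV} reads
\[
(\nabla_XR)Y=\<Y,\xi\>AX+\<AX,Y\>\xi .
\]
With $\xi=0$ this reduces to $\nabla R=0$. Hence both $E_1$ and $E_0=\ker R$ are parallel, and by de Rham's theorem $M^n$ is locally a Riemannian product $N_0\times N_1$.

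From \eqref{L}, $f_*E_1$ is tangent to the $\Q_{c_2}$ factor and $f_*E_0$ is tangent to the $\Q_{c_1}$ factor. Therefore $f=f_1\times f_2$ with $f_i$ isometric immersions. Since $\dim E_1=n-k+1$, the map $f_2$ is a local isometry of $N_1$ onto an open subset of $\Q_{c_2}^{n-k+1}$, and $f_1\colon N_0^{k-1}\to\Q_{c_1}^k$ is a hypersurface. This is exactly $f=f_1\times i_2$. The case $\dim\ker R=k$ is symmetric and yields $f=i_1\times f_2$.

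The main obstacle I expect is bookkeeping rather than anything conceptual. Condition (iii) is stated as holding ``everywhere''. So I must check that on each connected open set only one alternative occurs; this follows from the constancy of $t\in\{0,1\}$. I also need the correct form of the Codazzi equation for $R$ to conclude $\nabla R=0$, together with a standard product-decomposition lemma for immersions of products. For that lemma I would invoke the Moore-type splitting result used in \cite{MT}.
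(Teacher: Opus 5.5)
Your proposal is correct and matches the paper's approach. The paper gives no proof of its own and simply deduces the statement from Proposition 3.3 of \cite{MT}; your argument is a self-contained version of that citation. It establishes the algebraic equivalence of (ii) and (iii) via \eqref{eq:pi2}, \eqref{kerS}, \eqref{dimkerR}, and then uses $S=0\Rightarrow\nabla R=0$ by \eqref{derR}, the local de Rham decomposition, and the splitting of $f$.

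One simplification: no Moore-type lemma is needed at the end. Since $f_*E_0\subset T\Q_{c_1}^k$ and $f_*E_1\subset T\Q_{c_2}^{n-k+1}$, the map $\pi_1\circ f$ is constant along the $N_1$ factor and $\pi_2\circ f$ is constant along the $N_0$ factor, exactly as in the proof of Proposition~\ref{propmulti}.
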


Now we list some differential equations relating $R$, $\xi$ and $t$ that follow by computing the tangent and normal components in the equation 
$\nab\pi_2=\pi_2\nab$, where $\nab$ is the Levi-Civita connection of the ambient space,
applied to both tangent and normal vectors, and using the Gauss and Weingarten formulae:
\be\label{derR}
(\nabla_XR)Y= \<Y, \xi\>AX+\<AX,Y\>\xi,
\ee
\be\label{derS2}
\nabla_X\xi=(tI-R)AX
\ee
for all $X, Y\in\mathfrak{X}(M)$, and 
\be\label{dert}
\grad t=-2A\xi.
\ee
In terms of $S$, Eq. \eqref{derS2} takes the form
\be\label{derS}
(\nabla_XS)Y:=X(S(Y))-S(\nabla_XY)=\<(tI-R)AX,Y\>.
\ee

The Gauss and Codazzi equations of $f$ are
\be\label{GaussA}
\mathcal{R}(X,Y)=c_1(X\wedge Y-X\wedge RY-RX\wedge Y) + (c_1+ c_2)RX\wedge RY +AX\wedge AY,
\ee
and 
\be\label{codazziA}
(\nabla_XA)Y-(\nabla_YA)X=
(c_1(I-R)-c_2R)(X\wedge Y)\xi
\ee
for all $X,Y\in\mathfrak{X}(M)$, where $\mathcal{R}$ is the curvature tensor of $M^n$.\vspace{1ex}

Theorem \ref{exist} below is the  version for hypersurfaces of the existence part of Theorem 2.2 in \cite{LTV}.

\begin{theorem} \label{exist} Let $M^n$ be a simply connected Riemannian manifold, let $A$ and  $R$ be symmetric endomorphisms, the latter having $0$, $r\in (0,1)$ and $1$ as eigenvalues,  with $r$ simple, and let $S\in \Gamma(T^*M)$  be a one-form  such that \eqref{derR}, \eqref{derS2}, \eqref{dert}, \eqref{GaussA} and \eqref{codazziA} are satisfied, with $\xi=S^{\#}$ and $t=1-r$.
Then there exists an isometric immersion  $f\colon M^n \to \mathbb{Q}^k_{c_1}\times \mathbb{Q}^{n-k+1}_{c_2}$ and a unit normal vector field $N$ along $f$ such that $A=A_N^f$, $R=R^f$ and $S=S^f$.
\end{theorem}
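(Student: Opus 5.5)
The plan is to prove Theorem~\ref{exist} by the standard vector-bundle method used for the fundamental theorem of submanifolds. We realize $\Q_{c_1}^{k}\times\Q_{c_2}^{n-k+1}$ as a submanifold of a flat space $\mathbb{E}^{n+3}=\mathbb{E}_1^{k+1}\times\mathbb{E}_2^{n-k+2}$. Here $\mathbb{E}_i$ is Euclidean or Lorentzian according to the sign of $c_i$, and each factor is the quadric $\<F_i,F_i\>=1/c_i$; when $c_i=0$ we instead use an affine hyperplane, with the corresponding normal replaced by a constant vector. We then build an abstract flat bundle over $M^n$ carrying all the data.

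\textbf{Step 1: the bundle and its connection.} On $M^n$ consider
$$E=TM\oplus \spa\{N\}\oplus\spa\{\nu_1\}\oplus\spa\{\nu_2\},$$
where $N,\nu_1,\nu_2$ are formal sections with $\<N,N\>=1$ and $\<\nu_i,\nu_i\>=\mathrm{sgn}(c_i)$. The sections $\nu_i$ play the role of $\sqrt{|c_i|}\,F_i$. Define an endomorphism $P$ of $E$, which will be $\pi_2$, by
$$PY=RY+S(Y)N,\quad PN=\xi+tN,\quad P\nu_1=0,\quad P\nu_2=\nu_2.$$
The hypotheses \eqref{eq:pi1} and \eqref{eq:pi2} say exactly that $P$ is a symmetric projection; these follow from the assumed eigenvalue structure of $R$ with $r=1-t$. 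Define the connection $D$ by mimicking the Gauss and Weingarten formulas together with $D_X\nu_i=\sqrt{|c_i|}\,\pi_i X$, where $\pi_1=I-P$ and $\pi_2=P$ on $TM$. Concretely,
$$D_XY=\nabla_XY+\<AX,Y\>N-\sum_i \epsilon_i\sqrt{|c_i|}\,\<\pi_iX,Y\>\nu_i,\qquad D_XN=-AX+\dots,$$
with the remaining terms fixed by requiring $D$ to be compatible with the (possibly indefinite) metric. This compatibility holds by construction.

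\textbf{Step 2: $P$ is parallel and $D$ is flat.}
1. Checking $D P=0$ component by component reduces precisely to \eqref{derR}, \eqref{derS2} and \eqref{dert}.
2. The curvature of $D$ splits into blocks. The $TM\times TM$ block is the Gauss equation \eqref{GaussA}. The $TM\times N$ block is the Codazzi equation \eqref{codazziA}.
3. The blocks involving $\nu_i$ reduce to $D$-parallelism of $P$ (already obtained in item 1) and to the symmetry of $A$ and $R$.

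I expect this step to be the main obstacle. Keeping track of the signs and the factors $\sqrt{|c_i|}$ so that the mixed curvature terms cancel exactly against \eqref{GaussA} is delicate. I would organize the computation using the identity $\pi_1+\pi_2=I$ and the fact that $\pi_i$ is parallel.

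\textbf{Step 3: integration.} Since $M^n$ is simply connected and $D$ is flat and metric, there is a parallel bundle isometry $\Phi\colon E\to M\times\mathbb{E}^{n+3}$. Choose this trivialization so that the constant projection $\Phi P\Phi^{-1}$ is the orthogonal projection onto $\mathbb{E}_2$. The $\mathbb{E}^{n+3}$-valued one-form $X\mapsto\Phi(X)$ is closed, because $\nabla$ is torsion free and the normal components of $D_XY-D_YX$ vanish by the symmetry of $A$ and of $\<\pi_iX,Y\>$. Hence this one-form equals $dF$ for some map $F$, and $F$ is an isometric immersion.

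\textbf{Step 4: the image lies in the product.} Set $G=F-\sum_i \Phi(\nu_i)/(\epsilon_i\sqrt{|c_i|})$, where $\epsilon_i=\mathrm{sgn}(c_i)$. Since $dF(X)=\Phi(X)$ and the definition of $D_X\nu_i$ gives $d(\Phi(\nu_i))=\sqrt{|c_i|}\,\Phi(\pi_iX)$, the differential of $G$ vanishes. So $G$ is constant, and after a translation it is zero. The $\mathbb{E}_i$-component of $F$ is then proportional to $\Phi(\nu_i)$, which gives $\<\pi_iF,\pi_iF\>=1/c_i$. Thus $F$ takes values in $\Q_{c_1}^{k}\times\Q_{c_2}^{n-k+1}$; in the flat case $c_i=0$ one gets instead that $\<\pi_iF,\text{const}\>$ is constant.

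The unit normal of $f$ is then $\Phi(N)$. By construction of $D$ and $P$, its shape operator is $A$, and the tensors defined by \eqref{L} are $R$ and $S$, which proves the theorem.
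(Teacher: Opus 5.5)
Your approach is the standard one. The paper does not prove Theorem~\ref{exist} itself; it specializes Theorem~2.2 of \cite{LTV}, whose proof uses the same idea: adjoin the normals of the two quadrics, encode the data as a flat metric bundle carrying a parallel projection, and integrate into $\mathbb{E}^{n+3}$. You integrate directly instead of quoting the fundamental theorem for submanifolds of pseudo-Euclidean space, which amounts to the same thing. Your Step~2 is also less delicate than you expect. Since $P$ is $D$-parallel, $R^D(X,Y)\nu_i=\sqrt{|c_i|}\,\pi_i\bigl(D_XY-D_YX-[X,Y]\bigr)=0$. By skew-symmetry, every curvature block involving $\nu_1$ or $\nu_2$ then vanishes, and only \eqref{GaussA} and \eqref{codazziA} remain to be checked.

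There are three points to repair.

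\textbf{Sign error in Step~4.} With your $G$ one gets $dG(X)=\Phi(X)-\sum_i\epsilon_i\Phi(\pi_iX)$. When $\epsilon_1=-1$ and $\epsilon_2=1$ this equals $2\Phi(\pi_1X)\neq 0$, so the step fails exactly in the case of interest $c_1<0$. Since $\nu_i$ models $\sqrt{|c_i|}F_i$, the correct choice is $G=F-\sum_i\Phi(\nu_i)/\sqrt{|c_i|}$. Then $dG=\Phi\circ(I-\pi_1-\pi_2)=0$. After translating, $\pi_iF=\Phi(\nu_i)/\sqrt{|c_i|}$ satisfies $\langle\pi_iF,\pi_iF\rangle=\epsilon_i/|c_i|=1/c_i$.

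\textbf{The algebraic identities are hypotheses, not consequences.} The relations \eqref{eq:pi1} and \eqref{eq:pi2} do not follow from the spectrum of $R$. They say that $\xi$ spans the $r$-eigenspace of $R$ and that $\|\xi\|^2=r(1-r)$, which ties the independent datum $S$ to $R$. The differential equations do not supply this either: for instance, once $R\xi=r\xi$ is known, \eqref{derS2} and \eqref{dert} only show that $\|\xi\|^2-t(1-t)$ is constant. These identities must be assumed, as they are in \cite{LTV}. They are exactly what makes $P$ a symmetric projection, which Steps~2--4 all use.

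\textbf{A rank condition is missing in Step~3.} You can move $\Phi P\Phi^{-1}$ onto the projection to $\mathbb{E}_2$ only if $\operatorname{rank}P=\operatorname{tr}P=\dim\ker(I-R)+2$ equals $\dim\mathbb{E}_2=n-k+2$. This means $\dim\ker(I-R)=n-k$ and $\dim\ker R=k-1$; the signatures then match automatically. This multiplicity condition is where $k$ enters, and it has to be assumed. Otherwise the same construction gives an immersion into a product whose factors have different dimensions.
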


Two hypersurfaces $f,g\colon M^n\to\mathbb{Q}^k_{c_1}\times \mathbb{Q}^{n-k+1}_{c_2}$ are \emph{congruent} if there exists an isometry $\Phi$ of $ \mathbb{Q}^k_{c_1}\times \mathbb{Q}^{n-k+1}_{c_2}$ such that $g=\Phi\circ f$. The uniqueness part of 
Theorem 2.2 in \cite{LTV} for hypersurfaces can be stated as follows.

\begin{proposition} \label{cong} 
The oriented hypersurfaces $f,g\colon M^n\to\mathbb{Q}^k_{c_1}\times \mathbb{Q}^{n-k+1}_{c_2}$ are congruent if and only if either
    $A_g=A_f$, $R_g=R_f$ and $S_g=S_f$, or $A_g=-A_f$, $R_g=R_f$ and $S_g=-S_f$.
    \end{proposition}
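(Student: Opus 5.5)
The statement to prove is Proposition~\ref{cong}: two oriented hypersurfaces are congruent if and only if their data $(A,R,S)$ coincide, or coincide up to the sign change $(A,S)\mapsto(-A,-S)$ with $R$ unchanged. I would treat the two directions separately, deriving the nontrivial one from the uniqueness part of Theorem~2.2 in \cite{LTV}.

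\emph{Congruent implies equal data.} Let $g=\Phi\circ f$ with $\Phi$ an isometry of $\Q_{c_1}^k\times\Q_{c_2}^{n-k+1}$. The key point is that when $\Phi$ preserves the product structure, $\Phi_*$ commutes with $\pi_2$. This holds automatically if $c_1\neq c_2$, for instance $-1$ and $1$. If $c_1=c_2$ and $k=n-k+1$, $\Phi$ might swap the factors. In that case I would either restrict "isometry" to product-preserving isometries, as \cite{LTV} does, or exclude this case. Since $\Phi_*$ is a linear isometry on each tangent space, $\Phi_*N$ is a unit normal to $g$, so the unit normal of $g$ is $N_g=\pm\Phi_*N$. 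Applying $\Phi_*$ to \eqref{L} and to $\pi_2N=f_*\xi+tN$ gives the following:
\begin{itemize}
\item $R_g=R_f$ for either sign of $N_g$, because \eqref{L} gives $\pi_2 g_*X=g_*R_fX\pm\<X,\xi_f\>N_g$;
\item $S_g=\pm S_f$, with the same sign as $N_g$;
\item $A_g=\pm A_f$, because $\tilde\nabla\Phi_*N=\Phi_*\tilde\nabla N$.
\end{itemize}
So the sign is determined by whether $N_g=\Phi_*N$ or $N_g=-\Phi_*N$. Replacing $N$ by $-N$ changes $(A,R,S)$ into $(-A,R,-S)$, since $t$ and $R$ are unchanged. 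This explains the second alternative in the statement.

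\emph{Equal data implies congruent.} First I would reduce the case $(A_g,R_g,S_g)=(-A_f,R_f,-S_f)$ to the case of equal data by reversing the normal of $g$. I would then prove congruence for equal data. Fix $x_0\in M$ and look for a product-preserving isometry $\Phi$ with $\Phi(f(x_0))=g(x_0)$ and $\Phi_*f_*=g_*$, $\Phi_*N_f=N_g$ at $x_0$. Because the data agree, the linear map $L$ defined by $f_*v\mapsto g_*v$, $N_f\mapsto N_g$ is an isometry of the ambient tangent spaces, and it intertwines $\pi_2$ by \eqref{L}. So $L$ maps $T\Q_{c_1}^k$ to $T\Q_{c_1}^k$ and $T\Q_{c_2}^{n-k+1}$ to $T\Q_{c_2}^{n-k+1}$. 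Since each factor is a space form, $L$ is realized by an isometry $\Phi=\Phi_1\times\Phi_2$. Set $h=\Phi\circ f$. Then $h$ and $g$ have the same data and the same first-order jet at $x_0$. It remains to show $h=g$.

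\emph{Main obstacle.} The hardest step is proving $h=g$. My approach would be to embed the ambient space in $\Les^{k+1}\times\R^{n-k+2}$ (or the appropriate flat model) and view the frame $(h_*,N_h,\pi_2 h_*,\pi_2 N_h,h)$ as the solution of a linear first-order ODE along curves. The coefficients of this ODE are determined by $A$, $R$, $S$ through the Gauss and Weingarten formulas together with \eqref{derR}--\eqref{dert}. Since $h$ and $g$ satisfy the same system with the same initial value at $x_0$, uniqueness of solutions along curves from $x_0$ in the connected $M^n$ gives $h=g$. The delicate bookkeeping is checking that the flat ambient connection applied to $\pi_2$ closes up the system, and this is exactly what equations \eqref{derR}--\eqref{dert} ensure.
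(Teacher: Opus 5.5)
Your proof is correct in substance, but it takes a different route from the paper. The paper gives no argument of its own; it simply quotes the uniqueness half of Theorem~2.2 in \cite{LTV}.

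That uniqueness is obtained in the flat model, essentially as in the converse of Proposition~\ref{congL} with $\rho=0$. With equal data, the map $i_*N_f\mapsto i_*N_g$, $f_1\mapsto g_1$, $f_2\mapsto g_2$ is a parallel isometry between the normal bundles of $\tilde f,\tilde g$ in $\Les^{n+3}$. It carries \eqref{sfft} for $f$ to \eqref{sfft} for $g$, so it extends to a constant linear isometry $\Psi$ with $\Psi\circ\tilde f=\tilde g$. Equation \eqref{L} then shows a posteriori that $\Psi$ commutes with $\pi_2$, so $\Psi$ restricts to an isometry of the product.

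You reverse the order. You first match $1$-jets at $x_0$ by a product isometry $\Phi_1\times\Phi_2$, using frame-transitivity of the isometry groups of the factors, so product-preservation is built in. What remains is the bare statement that equal data and equal $1$-jets at a point force equality, which is linear ODE uniqueness. Your version is more elementary and self-contained. The flat-space route skips the normalization and is the mechanism behind the general Bonnet theorem of \cite{LTV} in arbitrary codimension. Your explicit forward direction, and your caveat about factor-swapping isometries when $c_1=c_2$, are points the paper leaves unstated.

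Two bookkeeping corrections are needed in your last step.

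First, for $L$ to intertwine $\pi_2$ on $N_f$ as well, you need $t_f=t_g$. This follows from $R_f=R_g$: taking the trace of $\pi_2$ in the frame $\{f_*e_i,N\}$ gives $t=(n-k+1)-\mathrm{tr}\,R$.

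Second, your unknowns $(h_*,N_h,\pi_2h_*,\pi_2N_h,h)$ do not form a closed system. By \eqref{sfft}, the flat derivative of $\tilde h_*X$ contains the position component $h_1=\pi_1\tilde h$, which is not a combination of those unknowns. Meanwhile $\pi_2h_*$ and $\pi_2N_h$ are redundant by \eqref{L}. Write $\tilde h=h_1+h_2$ and fix a frame $E_i$ along a curve $\gamma$. For $\Hy^k\times\Sf^{n-k+1}$ (and analogously in general) the closed system is
\begin{align*}
\frac{d}{ds}\,\tilde h_*E_i&=\tilde h_*\nabla_{\gamma'}E_i+\<A\gamma',E_i\>N_h+\<\gamma',E_i\>(\tilde h-h_2)-\<R\gamma',E_i\>\tilde h,\\
\frac{d}{ds}\,N_h&=-\tilde h_*A\gamma'-S(\gamma')\tilde h,\qquad \frac{d}{ds}\,h_2=\tilde h_*R\gamma'+S(\gamma')N_h,\qquad \frac{d}{ds}\,\tilde h=\tilde h_*\gamma'.
\end{align*}
Its coefficients depend only on the metric of $M$ and on $A,R,S$. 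Your initial conditions at $x_0$, together with $h_2(x_0)=g_2(x_0)$, then give $h=g$. In particular, \eqref{derR}--\eqref{dert} play no role in uniqueness. They are the integrability conditions needed for existence (Theorem~\ref{exist}), not what closes the system.
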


Our interest in this paper is on isometric immersions $f\colon M^n\to \Hy^{k}\times\Sf^{n-k+1}$, with $2\leq k\leq n-1$, in which case the Gauss and Codazzi equations become, respectively,  
\be\label{Gauss}
R(X,Y)=X\wedge RY+RX\wedge Y-X\wedge Y+AX\wedge AY
\ee
and 
\be\label{codazzi}
(\nabla_XA)Y-(\nabla_YA)X+(X\wedge Y)\xi=0
\ee
for all $X,Y\in\mathfrak{X}(M)$.

Now consider the standard embedding $i\colon \Hy^k\times\Sf^{n-k+1}\to \Les^{k+1}\times\R^{n-k+2}=\Les^{n+3}$ and denote $\tilde{f}=i\circ f$.
Decompose the position vector $\tilde{f}$ as $\tilde{f}=f_1+f_2$, where $f_1=\pi_1\circ f$ and $f_2=\pi_2\circ f$ are the components in $\Les^{k+1}$ and $\R^{n-k+2}$, respectively.
Notice that $\<\tilde{f},\tilde{f}\>=0$, for $\<f_1,f_1\>=-1$ and $\<f_2,f_2\>=1$.
The second fundamental form $\a_{\tilde{f}}$ of $\tilde{f}$ is given by
\begin{align}
    \a_{\tilde{f}}(X,Y)&=\<AX,Y\>i_*N+(\<X,Y\>-\<RX,Y\>)f_1-\<RX,Y\>f_2\nonumber\\
    &=\<AX,Y\>i_*N+\<X,Y\>f_1-\<RX,Y\>\tilde{f}.\label{sfft}
\end{align}

\section{Rotational and doubly rotational hypersurfaces}\label{Sec:Rotational}

In this section, we define rotational and doubly rotational hypersurfaces in $\mathbb{H}^k\times \mathbb{S}^{n-k}$, $2\leq k\leq n-1$, and derive a N\"olker-type theorem characterizing them as isometric immersions of warped products satisfying some conditions. 

Let $h_1\colon M_1^r\to\Hy^k\subset\Les^{k+1}$ and $h_2\colon M_2^s\to\Sf^{l}\subset\R^{l+1}$ be umbilical immersions (extrinsic circles if $r=1$ or $s=1$). It is well known that $h_1(M_1)$ and $h_2(M_2)$ are open subsets of  the intersections $\Hy^k\cap L_1$ and $\Sf^{l}\cap L_2$ of $\Hy^k$ and $\Sf^{l}$ with affine subspaces $L_1$ and $L_2$ of $\Les^{k+1}$ and $\R^{l+1}$, respectively. According to whether  the linear subspace parallel to $L_1$ is  space-like, time-like, or degenerate, we refer to these cases as \emph{elliptic}, \emph{hyperbolic} and \emph{parabolic}, respectively.\vspace{1ex}

\textbf{Elliptic case:} In this case, $h_1(M_1)$ is an open subset of a sphere of constant sectional curvature $\kappa_1>0$ in $\Hy^k$ (of a circle of radius $\kappa_1^{-1/2}$ if $r=1$). Let $\tilde{h}_1$ denote the composition of $h_1$ with the inclusion of $\Hy^k$ in $\Les^{k+1}$, and let $L_1^0$ be the linear subspace parallel to $L_1$. 
Then, the position vector $\tilde{h}_1$ can be orthogonally decomposed as $\tilde{h}_1=c_1+h_1^0$, where $h_1^0\in L_1^0$, $\<h_1^0,h_1^0\>=1/\kappa_1$ and $c_1\in L_1\subset\Les^{k+1}$ is the center of this sphere (circle). 
Notice  that $c_1$ is a time-like vector orthogonal to $L_1^0$, with $\<c_1,c_1\>=-(1+1/\kappa_1$). 

The immersion $\tilde{h}_1$ has flat normal bundle. We take a parallel orthonormal frame $\{\eta_0,\eta_1,\dots,...,\eta_{k-r}\}$ of $N_{\tilde{h}_1}M_1$, where $\eta_0=\sqrt{\kappa_1}h_1^0$ and $\eta_1=(1/\sqrt{-\<c_1,c_1\>})c_1$. 
Notice that the vector fields $\eta_i$, $1\leq i\leq k-r$, are constant. 
Arguing similarly for $h_2$, we take a parallel orthonormal frame $\{\delta_0,\delta_1,\dots,\delta_{l-s}\}$ of $N_{\tilde{h}_2}M_2$, where $\delta_0=\sqrt{\kappa_2}\,h_2^0$ and $\delta_1=c_2/\sqrt{\<c_2,c_2\>}$.

Let $h\colon M_3:=M_1\times M_2\to \Hy^k\times\Sf^l$ be the extrinsic product of $h_1$ and $h_2$, given by $h(x,y)=(h_1(x),h_2(y))$, and let $\tilde{h}$ denote its composition with the inclusion of $\Hy^k\times\Sf^l$ in $\Les^{k+l+2}=\Les^{k+1}\oplus\R^{l+1}$. 
Then $\tilde{h}$ has flat normal bundle, and we have a parallel orthonormal frame $\{\eta_0,\dots,\eta_{k-r},\delta_0,\dots,\delta_{l-s}\}$ of $N_{\tilde{h}}M_3$, with $\eta_i$ and $\delta_j$ constant for $i\neq 0\neq j$.
Fix orthonormal bases $\{e_i\}_{0\leq i\leq k-r}$ and $\{d_j\}_{0\leq j\leq l-s}$ of $\Les^{k-r+1}$ and $\R^{l-s+1}$ respectively, where $e_1$ is time-like. Then we have a parallel vector bundle isometry 
$$
\phi\colon M_3\times(\Les^{k-r+1}\oplus\R^{l-s+1})\to N_{\tilde{h}}M_3
$$
such that $\phi((x_1,x_2),e_i)=\phi_{(x_1,x_2)}(e_i)=\eta_i$ and $\phi_{(x_1,x_2)}(d_j)=\delta_j$, for all $i,j$.
\vspace{2ex}

\textbf{Hyperbolic case:} In this case, $h_1(M_1)$ is an open subset of a hyperbolic space of constant sectional curvature $-1<\kappa_1<0$ in $\Hy^k\subset \mathbb{L}^{k+1}$ (of an extrinsic circle of radius $(-\kappa_1)^{-1/2}$ if $r=1$). 
Let $L_1^0$ be the linear subspace of $\mathbb{L}^{k+1}$ parallel to $L_1$, and let $c_1$ be a space-like vector, orthogonal to $L_1^0$ such that $L_1=c_1+L_1^0$. 
Then, the position vector $\tilde{h}_1$ can be orthogonally decomposed as $\tilde{h}_1=c_1+h_1^0$, with $h_1^0\in L_1^0$ and $\<h_1^0,h_1^0\>=1/\kappa_1$.
Arguing as before, we take a parallel orthonormal frame $\{\eta_0,\eta_1,\dots,...,\eta_{k-r}\}$ of $N_{\tilde{h}_1}M_1$, where $\eta_0=\sqrt{-\kappa_1}h_1^0$ and $\eta_1=c_1/\sqrt{\<c_1,c_1\>}$. Notice that $\eta_i$, $1\leq i\leq k-r$ are constant and that $\eta_0$ is time-like.

Let $h$ be the extrinsic product of $h_1$ and $h_2$, and let $\tilde{h}$ denote its composition with the inclusion of $\mathbb{H}^k\times \mathbb{S}^{l}$ into $\Les^{k+l+2}=\Les^{k+1}\oplus\R^{l+1}$.
Fix orthonormal bases $\{e_i\}_{0\leq i\leq k-r}$ and $\{d_j\}_{0\leq j\leq l-s}$ of $\Les^{k-r+1}$ and $\R^{l-s+1}$ respectively, where $e_0$ is time-like. 
Then we have a parallel vector bundle isometry 
$$
\phi\colon M_3\times(\Les^{k-r+1}\oplus\R^{l-s+1})\to N_{\tilde{h}}M_3
$$
such that $\phi((x_1,x_2),e_i)=\phi_{(x_1,x_2)}(e_i)=\eta_i$ and $\phi_{(x_1,x_2)}(d_j)=\delta_j$, for all $i,j$.
\vspace{2ex}

\textbf{Parabolic case:} The linear subspace $L_1^0$ parallel to $L_1$ is degenerate, hence it contains a constant null direction. Let $0\neq v_0\in L_1^0$ be a null vector such that $\<\tilde{h}_1(x),v_0\>=1$ for all $x\in M_1$, and notice that $\tilde{h}_1^0=\tilde{h}_1+\frac{1}{2}v_0$ satisfies
$$
\<\tilde{h}_1^0,\tilde{h}_1^0\>=0 \;\;\mbox{and}\;\; \<\tilde{h}_1^0,v_0\>=1.
$$
Moreover, $\tilde{h}_1^0$ lies in the normal bundle of $\tilde{h}_1$. Take a parallel frame of $N_{\tilde{h}_1}M_1$ given by $\{\eta_0,\eta_1,\eta_2,\dots,\eta_{k-r}\}$, where $\eta_0=\tilde{h}_1^0$, $\eta_1=v_0$ and $\eta_i$, $2\leq i\leq k-r$, are constant.
Let $h$ and $\tilde{h}$ be given as before.
Take a pseudo-orthonormal basis $\{e_0,e_1,\dots,e_{k-r}\}$ of $\Les^{k-r+1}$, that is, the vectors $e_i$ satisfy
$\<e_i,e_i\>=0$ for $i=0,1$, $\<e_0,e_1\>=1$,  $\<e_i,e_j\>=\delta_{ij}$ for $i,j\notin\{0,1\}$ and $\<e_i, e_k\>=0$ if $k\in \{0,1\}$ and $i\not\in \{0, 1\}$.
Then we have a parallel vector bundle isometry
$$
\phi\colon M_3\times(\Les^{k-r+1}\oplus\R^{l-s+1})\to N_{\tilde{h}}M_3
$$
such that $\phi((x_1,x_2),e_i)=\phi_{(x_1,x_2)}(e_i)=\eta_i$ and $\phi_{(x_1,x_2)}(d_j)=\delta_j$, for all $i,j$.
\vspace{2ex}

In any case, the isometry $\phi$ can be explicitly expressed as
\be\label{phi}
\phi_{(x,y)}(\sum_i a_ie_i+\sum_j b_jd_j)=\sum_i a_i\eta_i+\sum_jb_j\delta_j.
\ee
Moreover, notice that $\sum_i a_i\eta_i\in\Les^{k+1}$ and $\sum_jb_j\delta_j\in\R^{l+1}$.

Regarding $\phi$ as a map from $ (\Les^{k-r+1}\oplus\R^{l-s+1})\times M_1\times M_2$ into $\Les^{k+1}\oplus\R^{s+1}$, its restriction to $ (\Hy^{k-r}\times\Sf^{l-s})\times M_1\times M_2$ induces a warped product metric on the subset $V$ of regular points, hence  determines a warped product representation $\Psi\colon V\subset  (\Hy^{k-r}\times\Sf^{l-s})\times_{\sigma_1} M_1\times_{\sigma_2} M_2\to \Psi(V)\subset \Hy^k\times\Sf^l$.
Our goal now is to use $\phi$ to build hypersurfaces that are ``adapted" to such warped product representation.

Let $f_0\colon M_0\to\Hy^{k-r}\times\Sf^{l-s}$ be a hypersurface, and let $\tilde{f_0}$ denote its composition with the inclusion of $\Hy^{k-r}\times\Sf^{l-s}$ into $\Les^{k-r+1}\oplus\R^{l-s+1}$. 
Define the map
$\tilde{f}\colon M_0\times M_1\times M_2\to\Les^{k+l+2}$ by
\be\label{tildef}
\tilde{f}(x_0,x_1,x_2)=\phi_{(x_1,x_2)}(\tilde{f}_0(x_0)).
\ee
Since $f_0$ is an immersion into $\Hy^{k-r}\times\Sf^{l-s}$, then $\tilde{f}(x_0,x_1,x_2)\in\Hy^k\times\Sf^l$ by \eqref{tildef}. 
Thus $\tilde f$  determines a map $f\colon M_0\times M_1\times M_2\to\Hy^k\times\Sf^l$.
Fix orthonormal bases $\{d_j\}_{0\leq j\leq l-s}$ of $\R^{l-s+1}$ and $\{e_i\}_{0\leq i\leq k-r}$ of $\Les^{k-r+1}$, with $e_1$ time-like in the elliptic case and $e_0$ time-like in the hyperbolic case. In the parabolic case, we fix a pseudo-orthonormal basis $\{e_i\}_{0\leq i\leq k-r}$.
With respect to these bases, we write 
$$
\tilde{f}_0(x_0)=\sum_ia_i(x_0)e_i+\sum_jb_j(x_0)d_j.
$$

Given $X_i\in\mathfrak{X}(M_i)$, $0\leq 1\leq 2$, to keep the notation simple,
we also denote by $X_i$ the vector field of $M_0\times M_1\times M_2$ that is projected to $X_i$ in the $i^{th}$ factor and to $0$ in the other ones.
Let $\hat{\nabla}$ denote the Levi-Civita connection of $\Les^{k+l+2}$, and let $\phi_{(x_1,x_2)}$ be denoted simply by $\phi$ whenever it is not necessary to explicitly specify the pair $(x_1,x_2)$.
By \eqref{phi}, we have
$$
\tilde{f}_*X_0=\phi(\tilde{f}_{0*}X_0)
$$
and
\begin{align*}
\tilde{f}_*X_2&=\hat{\nabla}_{X_2}\phi(\sum_ia_i(x_0)e_i+\sum_jb_j(x_0)d_j)\\
&=\hat{\nabla}_{X_2}(\sum_ia_i(x_0)\eta_i+\sum_jb_j(x_0)\delta_j)\\
&=\hat{\nabla}_{X_2}(\sum_jb_j(x_0)\delta_j)).
\end{align*}

From the definition of the frames $\{\delta_j\}$, we see that all $\delta_j$'s but $\delta_0$ are constant. Thus
$$
\tilde{f}_*X_2=b_0\hat{\nabla}_{X_2}\delta_0.
$$
The vector field $\delta_0$ is given by $\delta_0(x_2)=\sqrt{\kappa_2}(h_2(x_2)-c_2)$, where $c_2$ is constant, hence
$$
\tilde{f}_*X_2=\sqrt{\kappa_2}\<\tilde{f}_0,d_0\>h_{2*}X_2.
$$
Using the same arguments, we obtain
$$
\tilde{f}_*X_1=\begin{cases}
\begin{array}{ll}
  \sqrt{\kappa_1}\<\tilde{f}_0,e_0\>h_{1*}X_1,   & \mbox{elliptic case} \\
    -\sqrt{-\kappa_1}\<\tilde{f}_0,e_0\>h_{1*}X_1, & \mbox{hyperbolic case}\\
    \<\tilde{f}_0,e_1\>h_{1*}X_1,  &   \mbox{parabolic case}.
\end{array}
\end{cases}
$$
In particular, in the open subset where $f$ is an immersion, it determines an isometric immersion of a warped product 
$$
f\colon M_0\times_{\rho_1}M_1\times_{\rho_2}M_2\to\Hy^k\times\Sf^{l}.
$$
The corresponding warping functions
$\rho_1,\rho_2\in C^\infty(M_0)$ are given by 
\be\label{rho1}
\rho_1=\begin{cases}
\begin{array}{ll}
  \sqrt{\kappa_1}\<\tilde{f}_0,e_0\>,   & \mbox{elliptic case} \\
    \sqrt{-\kappa_1}\<\tilde{f}_0,e_0\>, & \mbox{hyperbolic case}\\
    \<\tilde{f}_0,e_1\>,  &   \mbox{parabolic case}
\end{array}
\end{cases}
\ee
and 
\be\label{rho2}
\rho_2=\sqrt{\kappa_2}\<\tilde{f}_0,d_0\>.
\ee

From now on, we assume that $f$ is an immersion.
Let $N_{f_0}$ be a (local) unit vector field normal to $f_0$ in $\Hy^{k-r}\times\Sf^{l-s}$. 
Then $N_f=\phi(N_{f_0})$ is normal to the immersion $f$ into $\Hy^k\times\Sf^{l}$.
If $N_{f_0}$ is written, with respect to our fixed frames $\{e_i\}_{1\leq i\leq k-r}$ and $\{d_j\}_{1\leq j\leq l-s}$, as
$$
N_{f_0}=\sum_i\mu_ie_i+\sum_j\nu_jd_j,
$$
then $N_f$ has the expression
\be\label{Nf coord}
N_f=\sum_i\mu_i\eta_i+\sum_j\nu_j\delta_j.
\ee
Notice that the functions $\mu_i$ and $\nu_j$ depend only on $x_0$ and that they are, up to sign, inner products of $N_{f_0}$ with some element of the fixed basis.
It follows from the definition of $N_f$ that
$$
\hat{\nabla}_{X_0}N_f=\phi(\bar{\nabla}_{X_0}N_{f_0}),
$$
where $\bar{\nabla}$ is the Levi-Civita connection in $\Les^{k-r+l-s+2}$.
Let $A_f$ denote the shape operator of $f$ with respect to $N_f$ and $A_{f_0}$ that of $f_0$ with respect to $N_{f_0}$.
It follows from the preceding equation that 
\be\label{A0}
\tilde{f}_*A_fX_0=\phi(\tilde{f}_{0*}A_{f_0}X_0).
\ee
Taking into account that, except for $\eta_0$, all $\eta_i$'s are constant, the derivative of $N_f$ in the direction of $X_1$ is given by
$$
\hat{\nabla}_{X_1}N_f=\begin{cases}
    \begin{array}{ll}
         \sqrt{\kappa_1}\<N_{f_0},e_0\>h_{1*}X_1, & \mbox{elliptic case}\\
         -\sqrt{-\kappa_1}\<N_{f_0},e_0\>h_{1*}X_1,&   \mbox{hyperbolic case}\\
         \<N_{f_0},e_1\>h_{1*}X_1, & \mbox{parabolic case}.
    \end{array}
\end{cases}
$$
 Therefore $X_1$ is a principal direction of $f$ with principal curvature
\be\label{princurv1}
 \lambda_1=\begin{cases}
     \begin{array}{ll}
         -\frac{\<N_{f_0},e_0\>}{\<\tilde{f}_0,e_0\>}, & \mbox{elliptic and hyperbolic cases}  \\
         -\frac{\<N_{f_0},e_1\>}{\<\tilde{f}_0,e_1\>}, & \mbox{parabollic case}.
     \end{array}
 \end{cases}
 \ee
Using the same arguments, we see that $X_2$ is a principal direction of $f$ with principal curvature
\be\label{princurv2}
\lambda_2=-\frac{\<N_{f_0},d_0\>}{\<\tilde{f}_0,d_0\>}.
\ee
Let $\{R_f, S_f, \xi_f,t_f\}$  and $\{R_{f_0}, S_{f_0}, \xi_{f_0}, t_{f_0}\}$ be associated with $f$ and $f_0$, respectively. 
Let $\pi_2$ denote both the projections onto $\Sf^{l}$ and $\Sf^{l-s}$. It follows from \eqref{phi} that
$$
\pi_2\phi(f_{0*}X_0)=\phi(\pi_2f_{0*}X_0).
$$
Thus 
$$
R_fX_0=R_{f_0}X_0
$$
for any $X_0\in\mathfrak{X}(M_0)$.
Since $N_f=\phi(N_{f_0})$, then
$$
f_*\xi_f+t_fN_f=\pi_2N_f=\pi_2\phi(N_{f_0})=\phi(f_{0*}\xi_{f_0}+t_{f_0}N_{f_0}).
$$
Hence
$\xi_f=\phi(\xi_{f_0})$ and $t_f=t_{f_0}$. In particular, $S_f(X_0)=S_{f_0}(X_0)$ for any $X_0\in\mathfrak{X}(M_0)$.
\vspace{2ex}

If we start with only one umbilical immersion, either into $\Hy^k$ or $\Sf^l$, we have a similar construction. 
In this case, we obtain an isometric immersion $f\colon M_0\times_\rho N\to \Hy^k\times\Sf^l$, where 
the warping function $\rho$ and the shape operator $A_f$ are given by the same expressions above, according to whether the umbilical immersion lies in $\Hy^k$ or $\Sf^l$.

\begin{remark}
\noindent{\em 
The hypersurfaces given by the preceding construction (in case $h$ is an extrinsic product $h=h_1\times h_2$) are  called \textit{doubly rotational} because they can be generated by the action of a certain subgroup of the group of isometries of $\mathbb{H}^k\times \mathbb{S}^l$ on the ``profile" $\phi_{(x_1^0, x_2^0)}(\tilde f_0(M_0))$ for fixed $x_1^0\in M_1$ and $x_2^0\in M_2$, the  orbit 
(the ``parallel") through $\phi_{(x_1^0, x_2^0)}(\tilde f_0(x_0^0))$, $x_0^0\in M_0$,  being the extrinsic product $(x_1, x_2)\mapsto \phi_{(x_1, x_2)}(\tilde f_0(x_0^0))$.}
\end{remark}

Our next goal is to describe sufficient conditions on an isometric immersion of a warped product $f\colon M_0\times_{\rho_1}M_1\times_{\rho_2}M_2\to\Hy^{k}\times\Sf^{l}$ which ensure that the immersion is given by \eqref{tildef}.  
The inclusions of each factor $M_i$ in $M=M_0\times_{\rho_1}M_1\times_{\rho_2}M_2$ determine the tangent distributions $\Delta_i\subset TM$, $i=0,1,2$. 
These integrable distributions give rise to the product net $TM=\Delta_0\oplus \Delta_1\oplus \Delta_2$. 
While $\Delta_1$ and $\Delta_2$ are spherical, the distribution $\Delta_0$ is totally geodesic. 
Moreover, $\Delta_0\oplus \Delta_1$ and $\Delta_0\oplus \Delta_2$ are also totally geodesic.

\begin{proposition}\label{propmulti}
    Let $f\colon M^n\to\Hy^k\times\Sf^l$ be an isometric immersion of a warped product $M= M_0\times_{\rho_1}M_1\times_{\rho_2}M_2$, where
    $\dim M_1=r\geq 0$, $\dim M_2=s\geq0$, $(r,s)\neq (0,0)$ and $\dim M_0=k+l-r-s-1$. Let $N_f$ be a unit vector field normal to $f$ with corresponding shape operator $A_f$. Assume that $\Delta_1$ and $\Delta_2$ are eigenspaces of $A_f$ such that $\Delta_1\subset \ker R_f$ and that $\Delta_2\subset \ker (I-R_f)$.
    Then, there exist umbilical immersions $h_1\colon M_1\to\Hy^k$ and $h_2\colon M_2\to\Sf^{l}$ if $rs\neq0$, or just an umbilical immersion into $\Hy^k$ or $\Sf^l$ if $rs=0$,  and an isometric immersion $f_0\colon M_0\to \Hy^{k-r}\times\Sf^{l-s}$, such that $f$ (regarded as an isometric immersion into  $\Les^{k+l+2}$) is given by \eqref{tildef}. 
\end{proposition}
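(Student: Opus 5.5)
We work with $\tilde f=i\circ f\colon M\to\Les^{k+l+2}$ and treat it as an isometric immersion of a warped product with flat ambient space. The aim is to apply N\"olker's theorem, i.e., its version for pseudo-Euclidean ambient spaces. That theorem says: if the second fundamental form of $\tilde f$ is adapted to the product net, meaning $\alpha_{\tilde f}(\Delta_i,\Delta_j)=0$ for $i\neq j$, then $\tilde f$ is a warped product of immersions. In that case, on the factors $M_1$ and $M_2$ it is given by umbilical (spherical) immersions, and along a slice $M_0\times\{(x_1,x_2)\}$ it is given by an immersion into the normal space of the product of those umbilical immersions. This is exactly the form \eqref{tildef}.

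The first step is to check adaptedness using \eqref{sfft}:
\[
\a_{\tilde f}(X,Y)=\<AX,Y\>i_*N+\<X,Y\>f_1-\<RX,Y\>\tilde f .
\]
The net $\Delta_0\oplus\Delta_1\oplus\Delta_2$ is orthogonal, so $\<X,Y\>=0$ whenever $X\in\Delta_i$, $Y\in\Delta_j$ with $i\neq j$. Since $\Delta_1,\Delta_2$ are eigenspaces of $A$, each of them is $A$-invariant, and by symmetry so is $\Delta_0=(\Delta_1\oplus\Delta_2)^\perp$. Hence $\<AX,Y\>=0$ for such $X,Y$. Similarly, $\Delta_1\subset\ker R$ and $\Delta_2\subset\ker(I-R)$ are $R$-invariant, so $\Delta_0$ is $R$-invariant and the $R$-term vanishes. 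Therefore $\tilde f$ is adapted.

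Next we identify the umbilical factors. For $X_1,Y_1\in\Delta_1$ we have $\a_{\tilde f}(X_1,Y_1)=\<X_1,Y_1\>(\lambda_1 i_*N+f_1)$, where $\lambda_1$ is the principal curvature on $\Delta_1$ and $RX_1=0$. This shows that the leaves of $\Delta_1$ are mapped into $\Les^{k+1}$-directions: $\pi_2f_*X_1=0$ by \eqref{L}, since $\Delta_1\subset\ker R\subset\ker S$. Consequently each leaf of $\Delta_1$ is mapped into a slice $\Hy^k\times\{pt\}$. The same argument with $\Delta_2\subset\ker(I-R)$ shows that each leaf of $\Delta_2$ is mapped into $\{pt\}\times\Sf^l$.

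N\"olker's theorem gives $\tilde f$ restricted to the $\Delta_1$-leaves as spherical in $\Les^{k+l+2}$, that is, umbilical with parallel mean curvature in the normal bundle of the leaf. Since these images lie in $\Hy^k$, the map $h_1$ obtained by fixing $x_0,x_2$ is an umbilical immersion into $\Hy^k$, and likewise $h_2$ is an umbilical immersion into $\Sf^l$. The affine subspaces they span determine the elliptic, hyperbolic or parabolic case, and with it the frames $\eta_i,\delta_j$ and the parallel isometry $\phi$ of \eqref{phi}.

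The main obstacle is the last step. N\"olker's decomposition only expresses $\tilde f(x_0,x_1,x_2)$ as $\phi_{(x_1,x_2)}(\tilde f_0(x_0))$ for some map $\tilde f_0$ into $\Les^{k-r+1}\oplus\R^{l-s+1}$, up to the choice of base point and a translation. We must show three things: the translation vanishes, $\tilde f_0$ lies in $\Hy^{k-r}\times\Sf^{l-s}$, and the $\Les^{k+1}$-component of $\tilde f$ is $\phi$ applied to the $\Les^{k-r+1}$-component of $\tilde f_0$. My plan is to use that the two projections $f_1,f_2$ satisfy $\<f_1,f_1\>=-1$ and $\<f_2,f_2\>=1$, together with the fact that $\phi$ maps $\Les^{k-r+1}$ into $\Les^{k+1}$ and $\R^{l-s+1}$ into $\R^{l+1}$ isometrically.

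The $M_1$-dependence of $\tilde f$ occurs only in the $\Les^{k+1}$ part, and in the form $c_1+h_1^0$. Comparing with $\tilde f_1=\sum a_i\eta_i$ then forces the center terms to match the constant $\eta_i$'s, with no extra constant translation. The same reasoning applies in the $\R^{l+1}$ part. The quadratic constraints then transfer back to give $\<\tilde f_0,\tilde f_0\>$ with the correct signature pieces, so $f_0$ takes values in $\Hy^{k-r}\times\Sf^{l-s}$ and is an isometric immersion.

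When $rs=0$, the same argument is run with only one umbilical factor.
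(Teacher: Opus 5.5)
Your adaptedness check via \eqref{sfft} matches the paper, and so does your use of $\Delta_1\subset\ker R_f$, $\Delta_2\subset\ker(I-R_f)$ to place the leaves of $\Delta_1$ (resp.\ $\Delta_2$) in slices $\Hy^k\times\{pt\}$ (resp.\ $\{pt\}\times\Sf^l$). Together these show that $\tilde h=\tilde f(\bar x_0,\cdot,\cdot)$ splits as $(\tilde h_1(x_1),\tilde h_2(x_2))$, with $\tilde h_1$ and $\tilde h_2$ umbilical in $\Hy^k$ and $\Sf^l$. The paper does not invoke the full N\"olker theorem. From adaptedness and the total geodesy of $\Delta_0$ it uses only that $\tilde f_*(\Delta_1\oplus\Delta_2)$ is constant along the leaves of $\Delta_0$. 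This makes $\xi^{x_0}=\tilde f(x_0,\cdot,\cdot)-\tilde h$ a parallel section of $N_{\tilde h}(M_1\times M_2)$, which is the same information you extract from N\"olker.

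The real gap is the final identification, which you flag but do not carry out. Saying that comparison with $\sum a_i\eta_i$ ``forces the center terms to match \ldots\ with no extra constant translation'' restates what must be proved. Moreover, the picture $c_1+h_1^0$ does not exist in the parabolic case.

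The missing observation is that the position vector $\tilde h$ is itself a parallel normal section of $\tilde h$. It is normal because $\<\tilde h_1,\tilde h_1\>=-1$ and $\<\tilde h_2,\tilde h_2\>=1$ are constant, and it is parallel because its derivative $\tilde h_*Z$ is tangent. Hence the affine normal spaces of $\tilde h$ pass through the origin, and $\tilde h=\phi(v_0)$ for a constant $v_0\in\Les^{k-r+1}\oplus\R^{l-s+1}$. Therefore $\tilde f=\tilde h+\phi(\bar f_0)=\phi(v_0+\bar f_0)$ with no translation at all, uniformly in the elliptic, hyperbolic and parabolic cases.

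Only after this step do your quadratic constraints do their job. Since $\phi$ maps $\Les^{k-r+1}$ into $\Les^{k+1}$ and $\R^{l-s+1}$ into $\R^{l+1}$ isometrically (cf.\ \eqref{phipreser}), $\<f_1,f_1\>=-1$ and $\<f_2,f_2\>=1$ give $v_0+\bar f_0\in\Hy^{k-r}\times\Sf^{l-s}$.

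Your norm-constraint idea could be pushed through directly in the elliptic case: constancy of $\<c+\rho y,c+\rho y\>$ for $y$ on an open piece of a sphere forces $c$ to be orthogonal to the sphere's span. It would, however, need separate treatment in the hyperbolic and parabolic cases, whereas the parallel position vector settles all three at once.
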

\proof We argue for the case $rs\neq 0$; the arguments in the other cases are similar.
Fix $\bar x_0\in M_0$. Assume, without loss of generality, that $\rho_1(\bar{x}_0)=1=\rho_2(\bar{x}_0)$ (this can always be achieved by scaling the metrics of $M_1$ and $M_2$, if necessary). Then the map $h=h_{\bar{x}_0}\colon M_1\times M_2\to\Hy^k\times\Sf^l$, given by
$$
h(x_1,x_2)=f(\bar{x}_0,x_1,x_2) 
$$ 
for all $(x_1, x_2)\in M_1\times M_2$, is an isometric immersion with respect to the Riemannian product metric on $M_1\times M_2$. 

Let $(\bar{\Delta}_1, \bar{\Delta}_2)$ denote the product net of $M_1\times M_2$. Then ${\nu_{\bar{x}_0}}_*\bar{\Delta}_i(x_1, x_2)=\Delta_i(\bar{x}_0, x_1, x_2)$ for all $(x_1, x_2)\in M_1\times M_2$, $1\leq i\leq 2$, where $\nu_{\bar{x}_0}$ denotes the inclusion 
$(x_1, x_2)\in M_1\times M_2\mapsto (\bar{x}_0, x_1, x_2)\in M_0\times M_1\times M_2.$
Since $\Delta_1\subset \ker R_f$ and $\Delta_2\subset \ker (I-R_f)$, then 
$$
{h}_*X_1\in T_{\tilde{\pi}_1({h}(x_1, x_2))}\mathbb{H}^k\subset  \mathbb{L}^{k+1}\quad \mbox{and}\quad {h}_*X_2\in T_{\tilde{\pi}_2({h}(x_1, x_2))}\mathbb{S}^{l}\subset  \mathbb{R}^{l+1}
$$  
for all $X_1\in \bar{\Delta}_1(x_1, x_2)$ and $X_2\in \bar{\Delta}_2(x_1, x_2)$, where $\tilde{\pi}_1\colon \mathbb{H}^k \times \mathbb{S}^l\to \mathbb{H}^k$ and  $\tilde{\pi}_2\colon \mathbb{H}^k \times \mathbb{S}^l\to  \mathbb{S}^l$ are the projections. 
Therefore, denoting by $\tilde{h}$ the composition of $h$ with the inclusion $i\colon \mathbb{H}^k\times \mathbb{S}^l \to\Les^{k+l+2}$, we have
$$
P_1\tilde{h}_*X_2=0=P_2 \tilde{h}_*X_1
$$  
for all $X_1\in \bar{\Delta}_1$ and $X_2\in \bar{\Delta}_2$, where $P_1\colon \mathbb{L}^{k+1}\times \mathbb{R}^{l+1}\to  \mathbb{L}^{k+1}$ and $P_2\colon \mathbb{L}^{k+1}\times \mathbb{R}^{l+1}\to  \mathbb{R}^{l+1}$  are orthogonal projections.
Therefore,  the map  $P_i\circ \tilde{h}$  is 
constant along the fibers of the projection $\pi_i\colon M_1\times M_2\to M_i$, $1\leq i\leq 2$. For a fixed $(\bar{x}_1, \bar{x}_2) \in M_1\times M_2$, consider the maps $\tilde{h}_1\colon M_1\to \mathbb{L}^{k+1}$ and $\tilde{h}_2\colon M_2\to \R^{l+1}$ given by 
$$\tilde{h}_1(x_1)=P_1(\tilde{h}(x_1, \bar{x}_2))\quad \mbox{and} \quad 
\tilde{h}_2(x_2)=P_2(\tilde{h}(\bar{x}_1, x_2)).$$ Then
$$
P_i\circ \tilde{h} =\tilde{h}_i\circ \pi_i,\;\;1\leq i\leq 2,
$$ 
that is,
$$\tilde{h}(x_1,x_2)=(\tilde{h}_1(x_1), \tilde{h}_2(x_2)).$$
Since $\tilde{h}$ takes values in $\Hy^k\times\Sf^l\subset \mathbb{L}^{k+l+2}$, then $\tilde{h}_1(M_1)\subset \Hy^k$ and  
$\tilde{h}_2(M_2)\subset \Sf^l$, hence  $\tilde{h}_1$ and $\tilde{h}_2$ give rise to isometric immersions 
$h_1\colon M_1\to \Hy^k$ and $h_2\colon M_2\to \Sf^l$ such that
$$h(x_1, x_2)=(h_1(x_1), h_2(x_2)).$$
Our assumption that $\Delta_1$ and $\Delta_2$ are eigenspaces of $f$ and the fact that they are spherical distributions imply that $h_1$ and $h_2$ are umbilical immersions (or extrinsic circles).

Now, the assumptions on $A_f$ and $R_f$, together with \eqref{sfft}, imply that $\a_{\tilde{f}}$ is adapted to the product net of $M$. 
Since, in addition,   $\Delta_{0}$ is totally geodesic, then $\tilde{f}_*(\Delta_1\oplus\Delta_2)$ is constant in $\mathbb{L}^{k+l+2}$ along each leaf of $\Delta_0$ (see Lemma 10.13 in \cite{DT}).
Fix $(\bar{x}_1,\bar{x}_2)\in M_1\times M_2$ and consider the immersion $f(\cdot,\bar{x}_1,\bar{x}_2)$ of the leaf of $\Delta_0$ through $(\bar{x}_1,\bar{x}_2)$. 
Then the image of $M_0$ by $f(\cdot,\bar{x}_1,\bar{x}_2)$ lies in the affine normal space of $\tilde{h}$ at $(\bar{x_1},\bar{x}_2)$, that is,
$$
\tilde{f}(x_0,\bar{x}_1,\bar{x}_2)\in \tilde{h}(\bar{x}_1,\bar{x}_2)+ N_{\tilde{h}}(M_1\times M_2)(\bar{x}_1,\bar{x}_2)
$$
for all $x_0\in M_0$ .
Hence, for each $x_0\in M_0$,  we can regard
$$
\xi^{x_0}(x_1,x_2)=\tilde{f}(x_0,x_1,x_2)-\tilde{h}(x_1,x_2)
$$
as a section of $N_{\tilde{h}}(M_1\times M_2)$.
For any $Z\in\mathfrak{X}(M_1\times M_2)$ we have
\begin{align*}
\nab_Z\xi^{x_0}&=\tilde{f}_*(x_0,x_1,x_2)Z- \tilde{h}_*(x_1,x_2)Z \\
&=\tilde{f}_*(x_0,x_1,x_2)Z- \tilde{f}_*(\bar{x}_0,x_1,x_2)Z.
\end{align*}
Using that $\tilde{f}_*(\Delta_1\oplus\Delta_2)$ is constant along each leaf of $\Delta_0$, by the previous expression we have 
$$
\nab_Z\xi^{x_0}\in \tilde{h}_*T_{(x_1,x_2)}(M_1\times M_2).
$$
Hence $\xi^{x_0}$ is a parallel section of $N_{\tilde{h}}(M_1\times M_2)$.
Observe that $\tilde{h}$ is an extrinsic product of umbilical immersions and that its  normal bundle
$$
N_{\tilde{h}}(M_1\times M_2)=N_{\tilde{h}_1}M_1\oplus N_{\tilde{h}_2}M_2
$$
is flat. Moreover, there is an isometry 
$$
\phi\colon(M_1\times M_2)\times (\Les^{k+1-r}\oplus\R^{l-s+1})\to N_{\tilde{h}_1}M_1\oplus N_{\tilde{h}_2}M_2
$$
satisfying
\be\label{phipreser}
\phi_{(x_1,x_2)}(e,0)=(\eta,0) \;\;\mbox{and}\;\; \phi_{(x_1,x_2)}(0,d)=(0,\delta)
\ee
according to the orthogonal decompositions $\Les^{k-r+1}\oplus\R^{l-s+1}$ and $N_{\tilde{h}_1}M_1\oplus N_{\tilde{h}_2}M_2$.
Since $\xi^{x_0}$ is parallel, then there is a well-defined map $\bar{f}_0\colon M_0\to\Les^{k-r+1}\oplus\R^{l-s+1}$ such that
$$
\phi_{(x_1,x_2)}(\bar{f}_0(x_0))=\xi^{x_0}(x_1,x_2)
$$
for all $(x_1,x_2)\in M_1\times M_2$.
We have 
$$
\tilde{f}_*(x_0,x_1,x_2)X_0=\phi_{(x_1,x_2)}(\bar{f}_{0*}(x_0)X_0)
$$
for any $X_0\in\mathfrak{X}(M_0)$. Thus $\bar{f}_0$ is an isometric immersion.
It follows from the definition of $\xi^{x_0}$ that
$$
\tilde{f}(x_0,x_1,x_2)=\tilde{h}(x_1,x_2)+\phi_{(x_1,x_2)}(\bar{f}_0(x_0)).
$$
Notice that $\tilde{h}(x_1,x_2)$ can be seen as a parallel section of $N_{\tilde{h}}(M_1\times M_2)$,
hence there is a vector $v_0\in\Les^{k-r+1}\oplus\R^{l-s+1}$ such that $\phi(v_0)=\tilde{h}$.
Moreover, we can regard $N_{\tilde{h}_1}M_1(x_1)$ and $N_{\tilde{h}_2}M_2(x_2)$ as affine subspaces of $\Les^{k+1}$ and $\R^{l+1}$ based at $\tilde{h}_1(x_1)$ 
and $\tilde{h}_2(x_2)$ respectively.
Hence, it follows from \eqref{phipreser} that $v_0+\bar{f_0}(x_0)\in\Hy^{k-r}\times\Sf^{l-s}\subset\Les^{k-r+1}\oplus\R^{l-s+1}$ for any $x_0\in M_0$.
Therefore, we have an isometric immersion $f_0\colon M_0\to\Hy^{k-r}\times\Sf^{l-s}$ such that \eqref{tildef} holds.
\qed

\begin{remark}
{\em
    A well known result by Nölker \cite{No} gives necessary and sufficient conditions for an isometric immersion of a warped product into a space-form to be an extrinsic warped product of immersions (see also Theorem $10.21$ in \cite{DT}). 
     Proposition \ref{propmulti} can be seen as an adaptation of Nölker's result to the particular case of hypersurfaces of $\Hy^k\times\Sf^l$. 
    In fact, the same arguments should work for submanifolds in higher codimension. 
    On the other hand, Schaaf generalized Nölker's result by proving a decomposition theorem for isometric immersions of warped products into semi-Riemannian warped products \cite{Sc}.
    Following \cite{Sc}, if we have a warped product representation of (an open subset of) $\Hy^k\times\Sf^l$, it suffices to check if the immersion and its second fundamental form are ``adapted" to this representation to conclude that it is an extrinsic warped product of immersions. It seemed easier, however, to follow the direct path in the proof of Proposition \ref{propmulti}.}
\end{remark}

\section{Weak congruence}\label{Sec:weak}

  In this section we introduce the notion of  $\emph{weak congruence}$ of hypersurfaces of 
  $\Hy^k\times\Sf^{n-k+1}$ in terms of the model of $\Hy^k\times\Sf^{n-k+1}$ as a submanifold of $\Les^{n+3}=\Les^{k+1}\oplus \R^{n-k+2}$. We characterize weak congruence of a pair of hypersurfaces in terms of their associated data $(A, R, S)$ as in Section $2$. This is then used to characterize weak congruence in terms of a certain flat bilinear form.

\begin{definition} \emph{ Let $f,g\colon M^n\to\Hy^{k}\times\Sf^{n-k+1}$ be isometric immersions, and let $\tilde{f}$ and $\tilde{g}$ be the corresponding immersions into $\Les^{n+3}=\Les^{k+1}\oplus \R^{n-k+2}$. We say that $f$ and $g$ are \emph{weakly congruent} if there exists an isometry $\Phi\colon \Les^{n+3}\to \Les^{n+3}$ that does not preserve $\Hy^{k}\times\Sf^{n-k+1}$ such that $\tilde g=\Phi\circ \tilde f$.}
\end{definition}

Using the conformal diffeomorphism between $\Hy^{k}\times\Sf^{n-k+1}$ and $\R^{n+1}\setminus\R^{k-1}$,  two isometric hypersurfaces in $\Hy^{k}\times\Sf^{n-k+1}$ can be seen as a pair of conformal hypersurfaces in $(\R^{n+1}\setminus\R^{k-1}, g_0)$, where $g_0$ is the Euclidean metric.
The hypersurfaces $f,g\colon M^n\to\Hy^{k}\times\Sf^{n-k+1}$ are weakly congruent if and only if the corresponding hypersurfaces in $(\R^{n+1}\setminus\R^{k-1}, g_0)$ are conformally congruent in $\R^{n+1}$.
On the other hand, the hypersurfaces $f$ and $g$ are congruent in $\Hy^{k}\times\Sf^{n-k+1}$ if and only if, when seen in $\R^{n+1}\setminus\R^{k-1}$, they are conformally congruent via a Moebius transformation of $\R^{n+1}$ that leaves $\R^{k-1}$ invariant. 

\begin{example}\label{weak} \emph{It was shown in Theorem $2$ of \cite{Ni} (see also Proposition $8$ of \cite{JT}) that any  umbilic submanifold of  $\Hy^{k}\times\Sf^{n-k+1}\subset \Les^{k+1}\oplus \R^{n-k+2}= \Les^{n+3}$ with codimension $p$ is the intersection of $\Hy^{k}\times\Sf^{n-k+1}$ with a 
time-like subspace of $\Les^{n+3}$ with codimension $p$. In particular, any umbilic hypersurface of  $\Hy^{k}\times\Sf^{n-k+1}\subset \Les^{k+1}\oplus \R^{n-k+2}= \Les^{n+3}$ is the intersection of $\Hy^{k}\times\Sf^{n-k+1}$ with a
time-like hyperplane of $\Les^{n+3}$. Given two time-like subspaces $V$ and $W$ of
$\Les^{n+3}$ of a given dimension, a necessary and sufficient condition for the existence of an isometry $\Phi$ of $\Les^{n+3}$ that preserves $\Les^{k+1}$ and $\R^{n-k+2}$ and takes one of the subspaces onto the other was obtained in Proposition $10$ of \cite{JT}. In particular, given two time-like hyperplanes $V$ and $W$ of $\Les^{n+3}$, this gives a necessary and sufficient condition for the corresponding umbilical hypersurfaces 
$V\cap(\Hy^{k}\times\Sf^{n-k+1})$ and $W\cap(\Hy^{k}\times\Sf^{n-k+1})$ to be congruent.}  
\end{example}

The next result characterizes weak congruence of two hypersurfaces in $\Hy^{k}\times\Sf^{n-k+1}$ in terms of their associated data.

\begin{proposition}\label{congL}
The oriented hypersurfaces  $f,g\colon M^n\to\Hy^{k}\times\Sf^{n-k+1}$  
are weakly congruent if and only if there exists  $\rho\in C^{\infty}(M)$ such that either

\begin{align}
    A_g&=A_f+\rho I\label{cong1}\\
    R_g&=R_f-\rho A_f-\frac{\rho^2}{2}I\label{cong2}\\
    S_g&=S_f+d\rho\label{cong3},
\end{align}
or 
\begin{align*}
    A_g&=-A_f+\rho I\\
    R_g&=R_f+\rho A_f-\frac{\rho^2}{2}I\\
    S_g&=-S_f+d\rho,
\end{align*}
depending on the choice of the unit normal vector fields along $f$ and $g$.
\end{proposition}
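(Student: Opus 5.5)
Let $\tilde f=i\circ f$ and $\tilde g=i\circ g$, and recall from \eqref{sfft} that the normal bundle of $\tilde f$ in $\Les^{n+3}$ is spanned by $i_*N_f$, $f_1$ and $\tilde f$, with
$$\<N_f,N_f\>=1,\quad \<f_1,f_1\>=-1,\quad \<f_1,\tilde f\>=-1,\quad \<\tilde f,\tilde f\>=0,$$
and all other products zero. The same holds for $\tilde g$ with $N_g$, $g_1$, $\tilde g$. The plan is to handle the two directions separately.

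\emph{Necessity.} Assume $\tilde g=\Phi\circ\tilde f$. Differentiating gives $\tilde g_*=\Phi\tilde f_*$. Hence $\Phi$ maps $N_{\tilde f}M$ onto $N_{\tilde g}M$, and
$$\a_{\tilde g}=\Phi\circ\a_{\tilde f}.$$
Next expand $\Phi N_f$ and $\Phi f_1$ in the basis $\{N_g,g_1,\tilde g\}$ and use that $\Phi$ is an isometry with $\Phi\tilde f=\tilde g$.

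\begin{itemize}
\item Write $\Phi N_f=aN_g+b\tilde g+cg_1$. The condition $\<\Phi N_f,\tilde g\>=0$ forces $c=0$, and then $\<\Phi N_f,\Phi N_f\>=1$ gives $a=\epsilon=\pm1$. Thus $\Phi N_f=\epsilon N_g+\mu\tilde g$.
\item Write $\Phi f_1=xN_g+y\tilde g+zg_1$. The condition $\<\Phi f_1,\tilde g\>=-1$ gives $z=1$.
\item The condition $\<\Phi f_1,\Phi f_1\>=-1$ gives $y=x^2/2$.
\item The condition $\<\Phi f_1,\Phi N_f\>=0$ gives $\mu=\epsilon x$.
\end{itemize}

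Set $\rho=x$. Substituting into $\Phi\a_{\tilde f}$ and comparing with \eqref{sfft} for $g$, the $N_g$-components give $A_g=\epsilon A_f+\rho I$. The $\tilde g$-components give $R_g=R_f-\epsilon\rho A_f-\frac{\rho^2}{2}I$. The two sign cases correspond to $\epsilon=\pm1$.

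For the one-form, observe that $S_f(X)=\<N_f,f_{2*}X\>=-\<N_f,f_{1*}X\>$. Solve $g_1=\Phi f_1-\rho N_g-\frac{\rho^2}{2}\tilde g$, differentiate in the direction $X$, and pair with $N_g$. Using
$$\<\Phi f_{1*}X,N_g\>=\<f_{1*}X,\Phi^{-1}N_g\>=\<f_{1*}X,\epsilon N_f-\rho\tilde f\>=-\epsilon S_f(X)$$
(since $\<f_{1*}X,\tilde f\>=\<f_{1*}X,f_1\>=0$), one obtains $S_g=\epsilon S_f+d\rho$.

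\emph{Sufficiency.} Given $\rho$ satisfying \eqref{cong1}--\eqref{cong3} (or the other sign system), define a bundle map $\Phi$ from $\tilde f^*T\Les^{n+3}$ to $\tilde g^*T\Les^{n+3}$ as follows:
$$\tilde f_*X\mapsto\tilde g_*X,\qquad \tilde f\mapsto\tilde g,\qquad N_f\mapsto \epsilon N_g+\epsilon\rho\tilde g,\qquad f_1\mapsto\rho N_g+\frac{\rho^2}{2}\tilde g+g_1.$$
The computations above show that $\Phi$ is a pointwise linear isometry. The main step, and the expected obstacle, is to prove that $\Phi$ is parallel, i.e., commutes with the flat connections. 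Then it is a constant isometry of $\Les^{n+3}$, and $\tilde g=\Phi\circ\tilde f$ follows by comparing position vectors.

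To check parallelism, I would differentiate each frame element using the Gauss and Weingarten formulas of $\tilde f$ and $\tilde g$. The second fundamental forms are given by \eqref{sfft}, and the normal connection terms involve $S$ through the derivatives of $f_1$ and $N$. The relations \eqref{cong1}--\eqref{cong3} should make the tangent and normal components match, much as in the computation for $S$ above. Finally, $\Phi$ cannot preserve $\Les^{k+1}$ on any open set unless $\rho=0$ there. When $\rho=0$, Proposition~\ref{cong} gives congruence, and this case is excluded as the trivial one.
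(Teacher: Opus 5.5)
Your proposal is correct and follows essentially the same route as the paper. For necessity, both expand $\Phi N_f$ and $\Phi f_1$ in the normal frame of $\tilde g$ (you use the null vector $\tilde g$ in place of the paper's $g_2$, which streamlines the algebra), read off $A_g$ and $R_g$ from $\a_{\tilde g}=\Phi\circ\a_{\tilde f}$, and get $S_g$ by differentiating; for the converse, both build the same bundle isometry and show it is parallel. The parallelism step you flag as the obstacle is in fact routine: for instance, the tangential part of $\tilde\nabla_X(\Phi f_1)$ reduces to $\tilde g_*(I-R_f)X$ exactly by \eqref{cong1}--\eqref{cong2}, and the normal components match by \eqref{cong3}. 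This is precisely what the paper does; it writes out the $N_f$ case and calls the others straightforward.
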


\proof Let $\tilde{f}$ and $\tilde{g}$ be the corresponding immersions in $\Les^{n+3}$, and let $\Psi\in O_1(n+3)$ be such  that $\tilde{g}=\Psi\circ\tilde{f}$. Then 
$$
\a_{\tilde{g}}(X,Y)=\Psi\a_{\tilde{f}}(X,Y)
$$
for all $X,Y\in\mathfrak{X}(M)$. Hence, using \eqref{sfft} we obtain
\begin{align}
    \<A_gX,Y\>&=\<\a_{\tilde{g}}(X,Y), i_*N_g\>\nonumber\\
    &=\<\Psi\a_{\tilde{f}}(X,Y), i_*N_g\>\nonumber\\
    &=\<\Psi(\<A_fX,Y\>i_*N_f+\<X,Y\>f_1-\<R_fX,Y\>\tilde{f}),i_*N_g\>\nonumber\\
    &=\<A_fX,Y\>\<\Psi i_*N_f,i_*N_g\>+\<X,Y\>\<\Psi f_1,i_*N_g\>-\<R_fX,Y\>\<\Psi\tilde{f},i_*N_g\>\nonumber\\
    &=\<A_fX,Y\>\<\Psi i_*N_f,i_*N_g\>+\<X,Y\>\<\Psi f_1,i_*N_g\>,\label{afg1}
\end{align}
where in the last step we have used that $\Psi\tilde{f}=\tilde{g}$ and that $\<i_*N_g,\tilde{g}\>=0$.

The normal bundle of $\tilde{g}$ is spanned by the orthonormal frame $\{i_*N_g,g_1,g_2\}$. Writing
$\Psi i_*N_f=ai_*N_g+bg_1+cg_2$, on the one hand, we have
\begin{align*}
    0&=\<\tilde{f},i_*N_f\>=\<\Psi\tilde{f},\Psi i_*N_f\>\\
    &=\<\tilde{g},ai_*N_g+bg_1+cg_2\>\\
    &=-b+c.
\end{align*}
On the other hand,
\begin{align*}
    1&=\<N_f,N_f\>=\<\Psi i_*N_f,\Psi i_*N_f\>\\
    &=a^2-b^2+c^2.
\end{align*}
We obtain from the previous relations that $b=c$ and $a^2=1$. We argue for the case $a=\<\Psi i_*N_f,i_*N_g\>=1$, the other case being similar. Hence $\Psi i_*N_f=i_*N_g+bg_1+bg_2$.
Denoting $\rho=\<\Psi f_1,i_*N_g\>$, Eq. \eqref{afg1} reduces to \eqref{cong1}.

From $\Psi\a_{\tilde{f}}=\a_{\tilde{g}}$ and \eqref{sfft} we have
\begin{align*}
-\<R_g X,Y\>&=\<\a_{\tilde{g}}(X,Y),g_2\>\\
&=\<A_fX,Y\>\<\Psi i_*N_f,g_2\>+\<X,Y\>\<\Psi f_1,g_2\>-\<R_fX,Y\>\<\Psi \tilde{f},g_2\>\\
&=b\<A_fX,Y\>+\<X,Y\>\<\Psi f_1,g_2\>-\<R_fX,Y\>
\end{align*}
for all $X,Y\in\mathfrak{X}(M)$. Hence
\be\label{rfg1}
R_g=R_f -bA_f-\<\Psi f_1,g_2\>I.
\ee
Write $\Psi f_1=\rho i_*N_g+\phi_1 g_1+\phi_2g_2$. Then 
\begin{align*}
    0&=\<\Psi f_1,\Psi i_*N_f\>=\<\rho i_*N_g+\phi_1 g_1+\phi_2g_2,i_*N_g+bg_1+bg_2\>\\
    &=\rho-b\phi_1+b\phi_2,
\end{align*}
thus $b(\phi_1-\phi_2)=\rho$.
Similarly, from $\<\Psi f_1,\Psi f_1\>=-1$ we obtain
\be\label{psis1}
\rho^2-\phi_1^2+\phi_2^2=-1.
\ee
Notice that $\Psi f_2=\tilde{g}-\Psi f_1$, so $\<\Psi f_2,\Psi f_2\>=1$ gives
\begin{align*}
1&=\<\tilde{g}-\Psi f_1,\tilde{g}-\Psi f_1\>\\
&=-2\<\tilde{g},\Psi f_1\>-1\\
&=-1+2(\phi_1-\phi_2).
\end{align*}
Therefore, 
$$
\phi_1-\phi_2=1,
$$
and hence $\rho=b$.
Finally, replacing the previous relations in \eqref{psis1}
yields $$\phi_1=1+\frac{\rho^2}{2}\,\,\,\mbox{and} \,\,\,\phi_2=\frac{\rho^2}{2}.$$
Thus $\Psi f_1=\rho i_*N_g+(1+\frac{\rho^2}{2})g_1+\frac{\rho^2}{2}g_2$, and 
\eqref{rfg1} becomes \eqref{cong2}.

Let $\nab$ be the Levi-Civita connection of $\Les^{n+3}$ and let $\nabla^{\tilde{f}\perp}$ be the normal connection of $\tilde{f}$. Then we have from \eqref{L} that
\begin{align*}
\nabla^{\tilde{f}\perp}_Xi_*N_f&=(\<f_*X,N_f\>-\<\pi_2f_*X,\pi_2N_f\>)f_1-\<\pi_2f_*X,\pi_2N_f\>f_2\\
&=-S_f(X)f_1-S_f(X)f_2\\
&=-S_f(X)\tilde{f},    
\end{align*}
and similarly,
$$
\nabla^{\tilde{g}\perp}_Xi_*N_g=-S_g(X)\tilde{g} 
$$
for any $X\in\mathfrak{X}(M)$.
Since $\Psi$ is constant, then  $\nab_X\Psi i_*N_f=\Psi\nab_Xi_*N_f$. 
On one hand, we have
\begin{align*}
(\nab_X\Psi i_*N_f)_{N_{\tilde{g}}M}&=(\nab_X(i_*N_g+\rho\tilde{g})))_{N_{\tilde{g}}M}\\
&=(-S_g(X)+X(\rho))\tilde{g}.
\end{align*}
On the other hand,
$$
(\Psi\nab_Xi_*N_f)_{N_{\tilde{g}}M}=-S_f(X)\Psi\tilde{f}=-S_f(X)\tilde{g}
$$
which implies \eqref{cong3}. The case $a=-1$ leads to the second system of equations in the statement.

Conversely, assume that \eqref{cong1}, \eqref{cong2} and \eqref{cong3} hold (the case of the second system of equations follows with a similar argument).
Define $\Psi\in \Gamma(\Hom(N_{\tilde{f}}M,N_{\tilde{g}}M))$ by
\begin{align*}
    \Psi i_*N_f&=i_*N_g+\rho g_1+\rho g_2\\
    \Psi f_1&= \rho i_*N_g+\left(1+\frac{\rho^2}{2}\right)g_1+\frac{\rho^2}{2}g_2\\
    \Psi f_2&= -\rho i_*N_g-\frac{\rho^2}{2}g_1+\left(1-\frac{\rho^2}{2}\right)g_2.
\end{align*}
A straightforward computation shows that $\Psi$ is a vector bundle isometry between the normal bundles of $\tilde{f}$ and $\tilde{g}$. Notice that $\Psi$ satisfies $\Psi\tilde{f}=\tilde{g}$.
Using \eqref{sfft}, \eqref{cong1} and \eqref{cong2}, we have
\begin{align*}
    \Psi\a_{\tilde{f}}(X,Y)=&\Psi(\<A_fX,Y\>i_*N_f+\<X,Y\>f_1-\<R_fX,Y\>\tilde{f})\\
    =&\<A_fX,Y\>(i_*N_g+\rho \tilde{g})+\<X,Y\>\left(\rho i_*N_g+g_1+\frac{\rho^2}{2}\tilde{g}\right)\\
    &-\<R_fX,Y\>\tilde{g}\\
    =&(\<A_fX,Y\>+\rho\<X,Y\>)i_*N_g+\<X,Y\>g_1\\
    &+\left(\rho\<A_fX,Y\>+\frac{\rho^2}{2}\<X,Y\>-\<R_fX,Y\>\right)\tilde{g}\\
    =&\<A_gX,Y\>i_*N_g+\<X,Y\>g_1-\<R_gX,Y\>\tilde{g}\\
    =&\a_{\tilde{g}}(X,Y)
\end{align*}
for all $X,Y\in\mathfrak{X}(M)$.

The normal connection of $\tilde{f}$ is given by
$$    \nabla^{\tilde{f}\perp}_Xi_*N_f=-S_f(X)\tilde{f}, \,\,\,\,    \nabla^{\tilde{f}\perp}_Xf_1=-S_f(X)i_*N_f\,\,\,\mbox{and}\,\,\,
    \nabla^{\tilde{f}\perp}_Xf_2=S_f(X)i_*N_f
    $$
for any $X\in\mathfrak{X}(M)$.
The normal connection of $\tilde{g}$ is given in an analogous way.
We claim that $\Psi$ is parallel. Indeed, we have
$$
    \nabla^{\tilde{g}\perp}_X\Psi i_*N_f=\nabla^{\tilde{g}\perp}_X(i_*N_g+\rho \tilde{g})=(-S_g(X)+X(\rho))\tilde{g}
$$
 and 
 $$
\Psi\nabla^{\tilde{f}\perp}_Xi_*N_f=\Psi(-S_f(X)\tilde{f})=-S_f(X)\tilde{g}.
$$
Then \eqref{cong3} implies that $\nabla^{\tilde{g}\perp}_X\Psi i_*N_f=\Psi\nabla^{\tilde{f}\perp}_Xi_*N_f$. Similarly, straightforward computations using \eqref{cong3} give 
$$\nabla^{\tilde{g}\perp}_X\Psi f_1=\Psi\nabla^{\tilde{f}\perp}_Xf_1\,\,\,\mbox{and}\,\,\,\nabla^{\tilde{g}\perp}_X\Psi f_2=\Psi\nabla^{\tilde{f}\perp}_Xf_2,$$ thus proving the claim.
Hence, extending $\Psi$ by $\Psi\tilde{f}_*X=\tilde{g}_*X$ for $X\in\mathfrak{X}(M)$, we see that $\Psi$ is in fact a constant linear isometry of $\Les^{n+3}$ such that $\Psi\tilde{f}=\tilde{g}$.
\qed
\vspace{2ex}

\section{A flat bilinear form}\label{Sec:flatbil}

In this section, we show that a pair of isometric immersions $f,g\colon M^n \to \mathbb{H}^k\times \mathbb{S}^{n-k+1}$ gives rise to a certain flat bilinear form, and we characterize in terms of it when $f$ and $g$ are weakly congruent.\vspace{1ex}

Given a vector space $W^{p,q}$  of dimension $p+q$  endowed with an  
inner product $\<\!\<\,,\,\>\!\>$ of signature $(p,q)$, and 
finite dimensional vector spaces $V$,  $U$, recall that a  bilinear form 
$\beta\colon V\times U\to W^{p,q}$ is said to be 
\emph{flat} with respect to $\<\!\<\,,\,\>\!\>$ if 
$$
\<\!\<\,\beta(X,W),\beta(Y,Z)\>\!\>-\<\!\<\,\beta(X,Z),\beta(Y,W)\>\!\>=0
$$
for all  $X,Y\in V$ and $Z,W\in U$. It is called
\emph{null} if
$$
\<\!\<\,\beta(X,W),\beta(Y,Z)\>\!\>=0
$$
for all  $X,Y\in V$ and $Z,W\in U$. Thus a null bilinear form is
necessarily flat.

Let $f,g\colon M^n\to \Hy^{k}\times\Sf^{n-k+1}$, $2\leq k\leq n-1$, be isometric immersions and consider $\R^2\oplus\R^2$ endowed with the indefinite inner product $\<\!\<\,,\,\>\!\>$ of signature $(2,2)$. 
Define, at each $x\in M^n$,  the bilinear form $\beta\colon T_xM\times T_xM\to \R^{2,2}$ by
\be\label{beta}
\beta(X,Y)=(\<A_fX,Y\>, \<\psi_+X,Y\>,\<A_gX,Y\>,\<\psi_-X,Y\>),
\ee
where $\psi_{\pm}= \frac{1}{\sqrt{2}}(R_f-R_g\pm I)$.
\begin{proposition}\label{prop:flat} 
    The bilinear form $\beta$ is flat with respect to $\<\!\<\,,\,\>\!\>$.
\end{proposition}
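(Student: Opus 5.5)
The plan is to verify the flatness identity directly from the Gauss equations \eqref{Gauss} of $f$ and $g$, which share the curvature tensor of $M^n$ because both immersions are isometric. I take $\<\!\<\,,\,\>\!\>$ to be the inner product on $\R^2\oplus\R^2$ that is positive definite on the first $\R^2$ and negative definite on the second:
$$\<\!\<(a,b,c,d),(a',b',c',d')\>\!\>=aa'+bb'-cc'-dd'.$$
This sign assignment is the choice that should make the identity work, since it pairs $A_f$ against $A_g$ and $\psi_+$ against $\psi_-$.

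First, set $D=R_f-R_g$, which is symmetric. Expanding the definition of $\beta$ gives
$$\<\!\<\beta(X,W),\beta(Y,Z)\>\!\>=\<A_fX,W\>\<A_fY,Z\>-\<A_gX,W\>\<A_gY,Z\>+\<\psi_+X,W\>\<\psi_+Y,Z\>-\<\psi_-X,W\>\<\psi_-Y,Z\>.$$
Since $\psi_\pm=\frac{1}{\sqrt2}(D\pm I)$, the last two terms combine as $\frac12\big[(a+b)(c+d)-(a-b)(c-d)\big]=ad+bc$. This yields
$$\<\psi_+X,W\>\<\psi_+Y,Z\>-\<\psi_-X,W\>\<\psi_-Y,Z\>=\<DX,W\>\<Y,Z\>+\<X,W\>\<DY,Z\>.$$
The factor $\frac{1}{\sqrt2}$ in the definition of $\psi_\pm$ is exactly what makes these coefficients equal to $1$.

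Second, antisymmetrize in $(W,Z)$ and use the convention $\<(U\wedge V)W,Z\>=\<V,W\>\<U,Z\>-\<U,W\>\<V,Z\>$. The flatness expression then becomes
$$-\big\<\big(A_fX\wedge A_fY-A_gX\wedge A_gY+DX\wedge Y+X\wedge DY\big)W,Z\big\>.$$
Third, subtract the Gauss equation \eqref{Gauss} for $g$ from the one for $f$. The curvature terms and the $-X\wedge Y$ terms cancel, and what remains is
$$0=X\wedge DY+DX\wedge Y+A_fX\wedge A_fY-A_gX\wedge A_gY.$$
This operator is precisely the one in the bracket above, so the flatness expression vanishes for all $X,Y,Z,W$.

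The only delicate point I anticipate is bookkeeping: the sign convention for $\wedge$, and the consistent assignment of signature to the four components. The cross terms from $\psi_\pm$ must reproduce the $X\wedge RY+RX\wedge Y$ part of \eqref{Gauss} with the same sign as the $A\wedge A$ part. If a sign came out wrong, I would recheck which pair of components is taken as negative. The identity term $-X\wedge Y$ causes no difficulty, since it cancels in the difference of the two Gauss equations.
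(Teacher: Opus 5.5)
Your proof is correct and follows the paper's argument. You expand the flatness expression with the signature $(+,+,-,-)$, use the factor $1/\sqrt{2}$ to combine the $\psi_\pm$ terms into $X\wedge(R_f-R_g)Y+(R_f-R_g)X\wedge Y$, and then cancel everything with the difference of the Gauss equations \eqref{Gauss} for $f$ and $g$, which share the curvature tensor of $M^n$; this just makes explicit the ``straightforward computation'' the paper leaves to the reader.
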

\proof
A straightforward computation gives
\begin{align*}
    \<\!\<\,\beta(X,W)&,\beta(Y,Z)\,\>\!\>-\<\!\<\,\beta(X,Z),\beta(Y,W)\,\>\!\>\\
    =&\<(A_fX\wedge A_fY)Z,W\>+\<(X\wedge R_fY)Z,W\>+\<(R_fX\wedge Y)Z,W\>\\
    &-\<(A_gX\wedge A_gY)Z,W\>-\<(X\wedge R_gY)Z,W\>-\<(R_gX\wedge Y)Z,W\>\\
    =&0,
\end{align*}
where the last step follows from the Gauss equation \eqref{Gauss} for $f$ and $g$.\vspace{2ex}
\qed

The \emph{relative nullity} subspace at $x\in M^n$ of an isometric immersion $f\colon M^n\to\Hy^k\times\Sf^{n-k+1}$ is the kernel of its second fundamental form at $x$, whose dimension, denoted by $\nu_f(x)$, is the \emph{index of relative nullity} of $f$ at $x$.

\begin{proposition}\label{prop:null}
   Let $f,g\colon M^n\to \Hy^{k}\times\Sf^{n-k+1}$, $2\leq k\leq n-1$, be isometric immersions and let $\beta$ be the associated flat bilinear form. If  $f$ and $g$ are weakly congruent, then $\beta$ is null. 
   Conversely, if $\beta$ is null and one of the immersions has index of relative nullity less than or equal to $n-2$ at every point, then $f$ and $g$ are weakly congruent.
      
\end{proposition}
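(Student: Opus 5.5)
For the direct implication I will use Proposition~\ref{congL}. Choosing orientations so that the first system holds, we have $A_g=A_f+\rho I$ and $R_f-R_g=\rho A_f+\frac{\rho^2}{2}I$. Then
\[
\<\!\<\beta(X,W),\beta(Y,Z)\>\!\>=\<A_fX,W\>\<A_fY,Z\>+\<\psi_+X,W\>\<\psi_+Y,Z\>-\<A_gX,W\>\<A_gY,Z\>-\<\psi_-X,W\>\<\psi_-Y,Z\>.
\]
Since $\psi_\pm=\frac1{\sqrt2}(D\pm I)$ with $D=R_f-R_g$, the $\psi$ terms combine to $\<DX,W\>\<Y,Z\>+\<X,W\>\<DY,Z\>$. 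Expanding $A_g=A_f+\rho I$ and substituting $D=\rho A_f+\frac{\rho^2}{2}I$, everything cancels, so $\beta$ is null. For the second system one replaces $A_f$ by $-A_f$; the sign changes are consistent, because $\beta$ only involves $A_g$ quadratically together with $A_f$ through $D$.

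For the converse, the plan is to show that nullity of $\beta$ forces the relations of Proposition~\ref{congL}, and then to invoke that proposition. Nullity means that $\<\!\<\beta(X,W),\beta(Y,Z)\>\!\>=0$ for all $X,Y,Z,W$. Written as an identity of bilinear forms in $(X,W)$ and $(Y,Z)$, this says
\[
A_f\otimes A_f-A_g\otimes A_g+D\otimes I+I\otimes D=0
\]
as a symmetric tensor on $\mathrm{Sym}(T_xM)$. I would use the linear algebra of tensor products of symmetric matrices: the span of the matrices $\{A_f,A_g,D,I\}$ must satisfy this quadratic relation. Take this as an identity in $\mathrm{Sym}\otimes\mathrm{Sym}$ and write everything in a basis. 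Since $n\geq 3$, $I$ is linearly independent from rank-one-type forms, and we distinguish cases by whether $A_f,A_g,I$ are linearly independent.

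The main step is to show that $A_g=\pm A_f+\rho I$ pointwise. Suppose $I,A_f,A_g$ are linearly independent. Then, extending to a basis, the coefficient matrix of the identity in the basis $I,A_f,A_g,\dots$ must vanish, and $\mathrm{diag}(1,-1)$ on $(A_f,A_g)$ cannot be cancelled by the symmetric term $D\otimes I+I\otimes D$, which has rank at most $2$ with signature $(1,1)$ and involves $I$. One checks this is impossible unless $D$ absorbs things. I expect this linear-algebra case analysis to be the main obstacle, and the relative nullity hypothesis is needed here. If $A_f$ is a multiple of $I$ plus something degenerate, the conclusion could fail; the assumption $\nu\leq n-2$ guarantees that $A_f$ (or $A_g$) has rank at least $2$, so that $A_f$ is not proportional to $I$ on a large kernel. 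That rules out degenerate cancellations, for instance of the form $A_f=aI$ on a subspace, which would let $A_g$ differ.

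Once $A_g=\epsilon A_f+\rho I$ with $\epsilon=\pm1$, substituting back gives $(D-\epsilon\rho A_f-\frac{\rho^2}{2}I)\otimes I+I\otimes(\cdots)=0$, which forces $D=\epsilon\rho A_f+\frac{\rho^2}{2}I$, i.e.\ \eqref{cong2}. The sign $\epsilon$ and $\rho$ are smooth on the open dense set where $A_f$ is not a multiple of $I$, which holds by the nullity assumption; hence they are smooth everywhere, with $\epsilon$ locally constant after orientation choice.

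It remains to get \eqref{cong3}. Differentiating \eqref{cong2} covariantly and using \eqref{derR} for $f$ and $g$, together with \eqref{cong1} and the relation between $\xi_g$ and $\xi_f$ obtained from the Codazzi equation \eqref{codazzi} applied to $A_g-A_f=\rho I$, should give $(X\wedge Y)(\xi_g-\xi_f-\grad\rho)=0$, using $(\nabla_X\rho I)Y-(\nabla_Y\rho I)X=X(\rho)Y-Y(\rho)X$. Since $n\geq 2$ this yields $\xi_g=\xi_f+\grad\rho$, which is \eqref{cong3}, and Proposition~\ref{congL} concludes.
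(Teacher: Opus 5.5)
Your direct implication is the paper's: both simply check that the null identity $\<A_fY,Z\>A_f-\<A_gY,Z\>A_g+\<DY,Z\>I+\<Y,Z\>D=0$, with $D=R_f-R_g$, holds under the relations of Proposition~\ref{congL}.

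For the converse you take a genuinely different route, and it is valid.
\begin{itemize}
\item \textbf{Algebraic step.} The paper tests the identity on orthonormal principal directions of $f$ to show that $A_f$, $A_g$, $D$ commute. It then compares eigenvalues in the umbilical and non-umbilical cases to obtain \eqref{cong1}--\eqref{cong2}. You replace this eigenvalue bookkeeping with a coefficient comparison in $\mathrm{Sym}\otimes\mathrm{Sym}$.
\item \textbf{The step giving \eqref{cong3}.} The paper uses Lemma~\ref{12imp3}, which differentiates \eqref{cong2} with \eqref{derR}. That lemma is the only place where $\nu\le n-2$ enters: it is needed to cancel $A_g$ from an identity of the form $S(Y)A_gX-S(X)A_gY=0$. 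Your Codazzi step replaces the lemma. Combining \eqref{codazzi} for $f$ and $g$ with $A_g=\epsilon A_f+\rho I$ gives exactly $(X\wedge Y)(\xi_g-\epsilon\xi_f-\grad\rho)=0$, hence $S_g=\epsilon S_f+d\rho$. You do not need to differentiate \eqref{cong2} or use \eqref{derR}.
\end{itemize}
Consequently your argument never actually uses the relative nullity hypothesis. It proves the converse without it, which is what your route buys over the paper's.

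This is also where your write-up goes astray, and the algebra is left unfinished.
\begin{itemize}
\item You say the nullity hypothesis is needed in the algebra. It is not.
\item You claim that $\nu\le n-2$ makes $A_f$ non-umbilical on an open dense set. This is false: every non-totally geodesic umbilical hypersurface has $\nu=0$ and $A_f=\lambda I$ everywhere (cf.\ Proposition~\ref{defumb}). So it cannot justify smoothness of $\rho$. Instead, once $\epsilon$ is fixed locally, take $\rho=(\mathrm{tr}A_g-\epsilon\,\mathrm{tr}A_f)/n$.
\item The algebra should be finished rather than left at ``unless $D$ absorbs things''. Every term of $D\otimes I+I\otimes D$ has a factor $I$, so it never contributes to the coefficients below.
\begin{itemize}
\item If $I,A_f,A_g$ are independent, the $A_f\otimes A_f$ coefficient is $1\neq 0$, a contradiction.
\item If $A_f\notin\R I$, then $A_g=sA_f+\rho I$ and that coefficient is $1-s^2$, so $s=\pm1$.
\item If $A_f=\lambda I$, the $A_g\otimes A_g$ coefficient forces $A_g=\mu I$. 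Then $D=\frac{\mu^2-\lambda^2}{2}I$, which is \eqref{cong2} with $\rho=\mu-\lambda$.
\end{itemize}
\end{itemize}
With these repairs your proof is complete. The global choice of the sign $\epsilon$ is handled only locally, but the paper treats it the same way.
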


\proof
The equation
$$
\<\!\<\,\beta(X,W),\beta(Y,Z)\>\!\>=0
$$
for all $X,Y,Z,W\in\mathfrak{X}(M)$ is equivalent to
\be\label{betanull}
\<A_fY,Z\>A_f-\<A_gY,Z\>A_g+\<(R_f-R_g)Y,Z\>I+\<Y,Z\>(R_f-R_g)=0
\ee
for all $Y,Z\in\mathfrak{X}(M)$. The preceding equation is satisfied if $A_g=A_f+\rho I$ and 
$R_f-R_g=\rho A_f+\frac{\rho^2}{2} I$ for some $\rho \in C^{\infty}(M)$, or if $A_g=-A_f+\rho I$ and $R_f-R_g=-\rho A_f+\frac{\rho^2}{2} I$. Thus $\beta$ is null if $f$ and $g$ are weakly congruent by Proposition \ref{congL}.

Conversely, assume that \eqref{betanull} is satisfied for all $Y,Z\in\mathfrak{X}(M)$ and that $\nu_g\leq n-2$ at any point. Choosing $Y$ and $Z$ as orthonormal principal directions of $f$, then \eqref{betanull} becomes
$$
-\<A_gY,Z\>A_g+\<(R_f-R_g)Y,Z\>I=0.
$$
If  $\<A_gY,Z\>\neq 0$, then the preceding equation implies that $A_g$ is a multiple of the identity, a contradiction. 
Therefore $A_g$ and $A_f$ commute, and from \eqref{betanull} they also commute with $R_f-R_g$. Let $\lambda_i, \mu_i$ and $r_i$, $1\leq i\leq n$, denote the eigenvalues of $A_f,A_g$ and $R_f-R_g$, respectively, corresponding to common eigenvectors $X_i$. Then
\be\label{betanulli}
\lambda_iA_f-\mu_iA_g+r_iI+(R_f-R_g)=0.
\ee
First assume that $f$ is umbilical, that is $A_f=\lambda I$ for some $\lambda\in C^{\infty}(M)$. Then the above equation becomes
$$
\lambda^2 I-\mu_iA_g+r_iI+(R_f-R_g)=0.
$$
If $A_g$ has two distinct eigenvalues $\mu_i\neq\mu_j$, it follows that
$$
(\mu_j-\mu_i)A_g+(r_i-r_j)I=0,
$$
showing that $A_g$ is a multiple of the identity, a contradiction. Then $g$ must also be umbilical, $A_g=\mu I$, and consequently $R_f-R_g=rI$.
Eq. \eqref{betanulli} gives $\lambda^2-\mu^2+2r=0$ and, writing $\rho=\mu-\lambda$, we see that \eqref{cong1} and \eqref{cong2} hold.

Now assume that neither $f$ nor $g$ is umbilical. Then there are at least two distinct eigenvalues $\lambda_i\neq\lambda_j$ of $A_f$. Taking the difference of \eqref{betanulli} for $i$ and $j$ we obtain
$$
(\lambda_i-\lambda_j)A_f-(\mu_i-\mu_j)A_g+(r_i-r_j)I=0,
$$
which implies that
$$
(\lambda_i-\lambda_j)^2=(\mu_i-\mu_j)^2.
$$
Changing the orientation of $g$, if necessary, we can assume that $\lambda_i-\lambda_j=\mu_i-\mu_j$, and hence \eqref{cong1} holds for $\rho=(r_i-r_j)/(\lambda_i-\lambda_j)$.
Therefore, replacing \eqref{cong1} in \eqref{betanulli} gives
\begin{align*}
   0&=\lambda_iA_f-(\lambda_i+\rho)(A_f+\rho I)+r_iI+(R_f-R_g)\\
&=-\lambda_i\rho I-\rho A_f-\rho^2I+\frac{1}{2}(\mu_i^2-\lambda_i^2)I+(R_f-R_g)\\
&=-\lambda_i\rho I-\rho A_f-\rho^2I+\frac{1}{2}((\lambda_i+\rho)^2-\lambda_i^2)I+(R_f-R_g)\\
&=-\rho A_f-\frac{\rho^2}{2}I+(R_f-R_g),
\end{align*}
which is \eqref{cong2}.
Hence \eqref{cong1} and \eqref{cong2} hold in any case, and Proposition \ref{congL} together with Lemma \ref{12imp3} below imply that $f$ and $g$ are weakly congruent.\vspace{1ex}
\qed

\begin{lemma}\label{12imp3}
Let $f,g\colon M^n\to\Hy^{k}\times\Sf^{n-k+1}$, $2\leq k\leq n-1$, be isometric immersions. If $f$ or $g$  has  
index of relative nullity less than or equal to $n-2$ at any point, then equations \eqref{cong1} and \eqref{cong2} imply \eqref{cong3}.
\end{lemma}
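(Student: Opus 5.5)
The natural route is to compare the Codazzi equations of $f$ and $g$, using only \eqref{cong1}. I will set
$$
\eta:=\xi_g-\xi_f-\grad\rho ,
$$
so that \eqref{cong3} is exactly the statement $\eta=0$, and then show that $\eta$ satisfies a pointwise algebraic identity forcing it to vanish.

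First, differentiate \eqref{cong1}. Since $\nabla_X I=0$, we get
$$
(\nabla_XA_g)Y=(\nabla_XA_f)Y+X(\rho)\,Y \quad\mbox{for all } X,Y\in\mathfrak{X}(M).
$$
Skew-symmetrizing in $X,Y$ gives
$$
(\nabla_XA_g)Y-(\nabla_YA_g)X=(\nabla_XA_f)Y-(\nabla_YA_f)X+X(\rho)Y-Y(\rho)X .
$$
Next, apply \eqref{codazzi} to both $f$ and $g$. Note that $X(\rho)Y-Y(\rho)X=-(X\wedge Y)\grad\rho$, using the convention $(X\wedge Y)Z=\<Y,Z\>X-\<X,Z\>Y$. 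The identity above then becomes
$$
-(X\wedge Y)\xi_g=-(X\wedge Y)\xi_f-(X\wedge Y)\grad\rho,
$$
that is, $(X\wedge Y)\eta=0$ for all $X,Y$.

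The last step is linear algebra. Since $n\geq 3$, at each point we can choose $Y$ parallel to $\eta$ (if $\eta\neq 0$) and $X$ a nonzero vector orthogonal to $\eta$. Then $(X\wedge Y)\eta=\<Y,\eta\>X=\|\eta\|^2X\neq 0$, a contradiction. Hence $\eta=0$, so $\xi_g=\xi_f+\grad\rho$, which is \eqref{cong3} after passing to the dual one-forms. The second system of equations, with $A_g=-A_f+\rho I$, is handled the same way: reversing the orientation of $g$ replaces $A_g$ by $-A_g$ and $\xi_g$ by $-\xi_g$, which reduces it to the first case.

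The main point to check carefully is the sign conventions in \eqref{codazzi} and for $X\wedge Y$, since the cancellation relies on the $\grad\rho$ term having the correct sign. If a sign mismatch appeared, I would fall back on \eqref{derR}. That route substitutes \eqref{cong1} and \eqref{cong2} into $(\nabla_XR_g)Y=\<Y,\xi_g\>A_gX+\<A_gX,Y\>\xi_g$ and skew-symmetrizes to obtain an identity of the form $\omega(Y)(A_f+\rho I)X-\omega(X)(A_f+\rho I)Y=0$, where $\omega=\eta^\flat$. With this route, the hypothesis on the index of relative nullity would be used: it guarantees that $A_f+\rho I=A_g$ (or $A_f$) has rank at least $2$ at every point, which forces $\omega=0$.

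As planned above, however, the Codazzi argument appears not to need that hypothesis at all.
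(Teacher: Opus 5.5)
Your argument is correct. It takes a genuinely different and more economical route than the paper, and it proves a stronger statement.

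\textbf{The paper's route.} The paper differentiates \eqref{cong2} and substitutes \eqref{derR} for both $f$ and $g$, together with \eqref{codazzi} for $f$ and \eqref{cong1}. This gives the identity $\omega(Y)A_gX-\omega(X)A_gY=0$ with $\omega=S_g-S_f-d\rho$. To conclude $\omega=0$ it then needs $\nu_g\leq n-2$, i.e.\ $A_g$ of rank at least $2$. This is exactly your fallback.

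\textbf{Your route.} You compare the Codazzi equations of $f$ and $g$ directly. This uses only \eqref{cong1} and yields $(X\wedge Y)\eta=0$ for all $X,Y$. Since $X\wedge Y$ is built from the metric rather than from $A_g$, it forces $\eta=0$ as soon as $n\geq 2$, with no rank condition.

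\textbf{Sign conventions.} Your worry about signs is resolved in your favor. Use the convention $(X\wedge Y)Z=\langle Y,Z\rangle X-\langle X,Z\rangle Y$. This is the convention under which \eqref{codazziA} follows from the curvature tensor of the product, and the paper itself uses it in its proof when it writes $\rho(X\wedge Y)\xi_f=\rho S_f(Y)X-\rho S_f(X)Y$. With it you get exactly $\xi_g=\xi_f+\mbox{grad}\,\rho$. This agrees with the independent normal-connection derivation of \eqref{cong3} in Proposition \ref{congL}.

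\textbf{What each route buys.} Your route shows that \eqref{cong3} is a consequence of \eqref{cong1} alone, for any two isometric immersions of $M^n$. Hence both \eqref{cong2} and the relative nullity hypothesis are superfluous in this lemma. In particular, in the converse of Proposition \ref{congL}, condition \eqref{cong3} is automatic once \eqref{cong1} holds. The paper's route gains nothing extra for this statement; it only shows that \eqref{cong2} is compatible with \eqref{derR}, at the cost of the rank assumption.

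Your closing remark on the second system is correct. It is not needed, since the lemma only concerns \eqref{cong1}--\eqref{cong3}.
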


\proof Assume, without loss of generality, that $\nu_g\leq n-2$ at any point.
 Using \eqref{derR} for $g$, we have 
\be\label{codR}
(\nabla_XR_g)Y-(\nabla_YR_g)X=S_g(Y)A_gX-S_g(X)A_gY.
\ee
On the other hand, from \eqref{cong2}  we have 
\begin{align*}
(\nabla_XR_g)Y&=(\nabla_X(R_f-\rho A_f-\frac{\rho^2}{2}I))Y\\
&=(\nabla_XR_f)Y-X(\rho)A_fY-\rho(\nabla_XA_f)Y-\rho X(\rho)Y.
\end{align*}
Hence
\begin{align*}
(\nabla_XR_g)Y-(\nabla_YR_g)X=&(\nabla_XR_f)Y-X(\rho)A_fY-\rho(\nabla_XA_f)Y-\rho X(\rho)Y\\
&-(\nabla_YR_f)X+Y(\rho)A_fX+\rho(\nabla_YA_f)X+\rho Y(\rho)X.
\end{align*}
Using \eqref{derR}, \eqref{codazzi} and \eqref{cong1}, we obtain
\begin{align*}
    (\nabla_XR_g)Y-(&\nabla_YR_g)X\\
    =&S_f(Y)A_fX-S_f(X)A_fY-X(\rho)A_fY+Y(\rho)A_fX\\
    &+\rho S_f(Y) X-\rho S_f(X)Y-\rho X(\rho)Y+\rho Y(\rho)X\\
    =&(S_f(Y)+Y(\rho))A_fX-(S_f(X)+X(\rho))A_fY\\
    &+\rho(S_f(Y)+Y(\rho))X-\rho(S_f(X)+X(\rho))Y\\
    =&(S_f(Y)+Y(\rho))A_gX-(S_f(X)+X(\rho))A_gY.
\end{align*}
Replacing \eqref{codR} into the preceding equation yields
$$
S_g(Y)A_gX-S_g(X)A_gY=(S_f(Y)+Y(\rho))A_gX-(S_f(X)+X(\rho))A_gY
$$
 for all $X,Y\in\mathfrak{X}(M)$. Given  $X\in\mathfrak{X}(M)$, at each point $x\in M^n$ where  $A_gX\neq 0$  one can choose $Y\in T_xM$ such that  $A_gX$ and  $A_gY$ are linearly independent,  by the assumption that $\nu_g\leq n-2$ at any point. Then the above equation implies that  $S_g(X)=S_f(X)+X(\rho)$ at $x$. On the other hand, if $A_gX= 0$ at $x$, choosing any $Y\in T_xM$ such that  $A_gY\neq 0$, the above equation shows that also in this case  $S_g(X)=S_f(X)+X(\rho)$ at $x$.  
\qed
\vspace{2ex}

   The notions of congruence and weak congruence give rise to distinct notions of rigidity and deformability.

\begin{definition} \emph{ Let $f\colon M^n\to\Hy^{k}\times\Sf^{n-k+1}$ be an isometric immersion. We say that  $f$ is:
\begin{itemize}
\item[(i)] \emph{rigid}, if any other isometric immersion $g\colon M^n\to\Hy^{k}\times\Sf^{n-k+1}$ is congruent to $f$;
\item[(ii)] \emph{weakly rigid}, if any other isometric immersion $g\colon M^n\to\Hy^{k}\times\Sf^{n-k+1}$ is weakly congruent to $f$;
\item[(iii)] \emph{deformable}, if it is not rigid;
\item[(iv)]  \emph{truly deformable}, if there exists an  isometric immersion $g\colon M^n\to\Hy^{k}\times\Sf^{n-k+1}$ that is not weakly congruent to $f$ on any open subset. Any such isometric immersion is said to be a \emph{true deformation} of $f$.
\end{itemize}}
\end{definition} 

   In the next section, we study the hypersurfaces $f\colon M^n\to\Hy^{k}\times\Sf^{n-k+1}$ that admit deformations $g\colon M^n\to\Hy^{k}\times\Sf^{n-k+1}$ that are weakly congruent to $f$. The subsequent one is devoted to those that are truly deformable.

\section{Hypersurfaces with weakly congruent deformations} \label{Sec:weakdef}

   In this section, we investigate the  hypersurfaces $f\colon M^n\to\Hy^{k}\times\Sf^{n-k+1}$, $2\leq k\leq n-1$, that admit deformations $g\colon M^n\to\Hy^{k}\times\Sf^{n-k+1}$ that are weakly congruent to $f$. We say in short that $g$ is a 
   \emph{weakly congruent deformation} of~$f$.\vspace{1ex}

   Let $\Phi$ be a linear isometry of $ \Les^{n+3}$, and let $\{e_1, \ldots, e_{k+1}, e_{k+2}, \ldots, e_{n+3}\}$ 
be an orthonormal basis of 
$\Les^{n+3}$ such that $\{e_1, \ldots, e_{k+1}\}$ is an orthonormal basis of 
$\Les^{k+1}$ with $\<e_1, e_1\>=-1$ and $\{e_{k+2}, \ldots, e_{n+3}\}$ is an orthonormal basis of  $\R^{n-k+2}$.  Denote  $v_j=\Phi(e_j)=\sum_{i=1}^{n+3} a_{ij} e_i$, $1\leq j\leq n+3$, where $(a_{ij})$ is an orthogonal Lorentzian matrix.
Given $X=\sum_{j=1}^{n+3} x_j e_j$, we have 
$$
\Phi X=\sum_{j=1}^{n+3} x_j v_j=\sum_{j=1}^{n+3}x_j\left( \sum_{i=1}^{n+3} a_{ij}e_i \right)=\sum_{i=1}^{n+3}\left( \sum_{j=1}^{n+3}a_{ij} x_j \right)e_i.
$$

Define $F=F_\Phi=(F_1, F_2, F_3, F_4)\colon \Les^{n+3} \to \mathbb{R}^4$ by
$$
\left\{ \begin{array}{l}
F_1(X)=-x_1^2+\sum_{i=2}^{k+1} x_i^2;\vspace{1ex}\\
F_2(X)=\sum_{i=k+2}^{n+3} x_i^2;\vspace{1ex}\\
F_3(X)=-\left(\sum_{j=1}^{n+3}a_{1j}x_j\right)^2+\sum_{\ell=2}^{k+1} \left(\sum_{j=1}^{n+3}a_{\ell j}x_j\right)^2;\vspace{1ex}\\
F_4(X)=\sum_{\ell=k+2}^{n+3}\left(\sum_{j=1}^{n+3}a_{\ell j}x_j\right)^2.
\end{array}
\right.
$$

\begin{proposition}\label{dif} Let $\Phi$ be an isometry of $\Les^{n+3}=\Les^{k+1}\oplus \R^{n-k+2}$ that does not preserve $\Les^{k+1}$ and $\R^{n-k+2}$. Assume that $M=F_\Phi^{-1}(-1,1, -1,1)\subset \Les^{n+3}$ is nonempty and that $F_\Phi$ has rank $3$ on an open neighborhood of $M$. Then $M$ is a hypersurface of dimension $n$ of $\Hy^{k}\times\Sf^{n-k+1}$ and the inclusion $j\colon M\to \Hy^{k}\times\Sf^{n-k+1}$ is deformable, with $\Phi\circ j$ a weakly congruent deformation of $j$.

Conversely, if $f\colon M\to \Hy^{k}\times\Sf^{n-k+1}$ is a hypersurface that admits a weakly congruent deformation $g\colon M\to \Hy^{k}\times\Sf^{n-k+1}$, then there exists an isometry $\Phi$ of $\Les^{n+3}$ that does not preserve $\Hy^{k}\times\Sf^{n-k+1}$ such that $f(M)\subset F_\Phi^{-1}(-1,1,-1,1)$.
\end{proposition}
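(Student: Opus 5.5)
The plan is to read the four components of $F_\Phi$ invariantly. With $P_1\colon \Les^{n+3}\to\Les^{k+1}$ and $P_2\colon\Les^{n+3}\to\R^{n-k+2}$ the orthogonal projections, the definitions say
$$F_1(X)=\<P_1X,P_1X\>,\quad F_2(X)=\<P_2X,P_2X\>,\quad F_3(X)=\<P_1\Phi X,P_1\Phi X\>,\quad F_4(X)=\<P_2\Phi X,P_2\Phi X\>,$$
since $\sum_j a_{\ell j}x_j$ is the $e_\ell$-coordinate of $\Phi X$. Hence
$$F_\Phi^{-1}(-1,1,-1,1)=\{X : X\in \Hy^k\times\Sf^{n-k+1} \mbox{ and } \Phi X\in \Hy^k\times\Sf^{n-k+1}\},$$
where we choose the appropriate sheet of the two-sheeted hyperboloid, which is an open condition. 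The proof then has two steps: a regular-value argument for the direct statement, and a tautology for the converse.

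\emph{Direct statement.} First I note that
$$F_1+F_2=\<X,X\>=\<\Phi X,\Phi X\>=F_3+F_4,$$
so the rank of $F_\Phi$ is always at most $3$. The assumption that the rank is exactly $3$ on a neighborhood is therefore the maximal-rank condition. By the constant rank theorem, $M$ is locally a submanifold of $\Les^{n+3}$ of codimension $3$, i.e. of dimension $n$. It lies in the $(n+1)$-dimensional manifold $\Hy^k\times\Sf^{n-k+1}$, so $j$ is a hypersurface. Because $\Phi(M)\subset \Hy^k\times\Sf^{n-k+1}$ and $\Phi$ is a linear isometry of $\Les^{n+3}$, the map $g=\Phi\circ j$ is an isometric immersion into $\Hy^k\times\Sf^{n-k+1}$ inducing the same metric as $j$. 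Since $\Phi$ does not preserve the factors, $g$ is weakly congruent to $j$ by definition.

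It remains to see that $g$ is not congruent to $j$ on any open subset $U$. Suppose $g=\tau\circ j$ on $U$ for an isometry $\tau$ of $\Hy^k\times\Sf^{n-k+1}$. Such a $\tau$ extends to a linear isometry $T$ of $\Les^{n+3}$ preserving $\Les^{k+1}$ and $\R^{n-k+2}$. Then $T^{-1}\Phi$ is the identity on the linear span of $j(U)$. If $j(U)$ spans $\Les^{n+3}$, we get $\Phi=T$, a contradiction. Since $M$ is a real-algebraic set, the span of $j(U)$ does not depend on $U$, so the only remaining case is when $M$ lies in a (time-like) hyperplane $V$, i.e. the umbilical case of Example~\ref{weak}.

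\emph{Main obstacle: the umbilical case.} Here $T^{-1}\Phi$ is either the identity or the reflection across $V$. The identity is excluded as before. The reflection has to be excluded separately. For this I would use the data of Proposition~\ref{congL}: compute $\rho=\<\Phi f_1,i_*N_g\>$ and check that $\rho\neq 0$ on open sets. Alternatively, I would use that the reflection across a time-like hyperplane $V$ not adapted to the splitting does not preserve the factors, so that $T=\Phi\sigma_V$ cannot preserve them either unless $\Phi$ is special. I expect this to be the delicate point of the direct part. If needed, I would replace $\Phi$ by $\Phi\sigma_V$, which gives the same immersion $g$ on $M$, to reduce to the identity case.

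\emph{Converse.} This is immediate from the definition of weak congruence. If $\tilde g=\Phi\circ\tilde f$ for an isometry $\Phi$ of $\Les^{n+3}$ not preserving $\Hy^k\times\Sf^{n-k+1}$, then for every $x\in M$ we have $\tilde f(x)\in\Hy^k\times\Sf^{n-k+1}$, which gives $F_1=-1$ and $F_2=1$. We also have $\Phi\tilde f(x)=\tilde g(x)\in\Hy^k\times\Sf^{n-k+1}$, which gives $F_3=-1$ and $F_4=1$. Hence $f(M)\subset F_\Phi^{-1}(-1,1,-1,1)$.
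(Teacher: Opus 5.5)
Your outline is the paper's own: constant rank for the direct part, and a tautology for the converse. You have also found the one step the paper's proof passes over. The paper asserts that $\Phi\circ j$ is not congruent to $j$ ``for $\Phi$ does not preserve $\Hy^k\times\Sf^{n-k+1}$''. Your spanning argument proves this when $j(U)$ spans $\Les^{n+3}$.

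The genuine gap is your plan to exclude the reflection in the umbilical case. That case cannot be excluded, because it actually occurs, and the conclusion is then false. Take a unit space-like $v=v_1+v_2$ with $v_1\in\Les^{k+1}$ time-like and $v_2\neq0$, and let $\Phi=\sigma_V$ be the reflection in $V=\{v\}^\perp$. It does not preserve $\Les^{k+1}$, since the $\R^{n-k+2}$-component of $\sigma_V e$ is $-2\<e,v_1\>v_2$. One computes
\[
F_3(X)=F_1(X)-4\<X,v\>\<X,w\>,\qquad w=v_1-\<v_1,v_1\>v,\qquad \<w,w\>=\<v_1,v_1\>\bigl(1-\<v_1,v_1\>\bigr)<0 .
\]
Points of $\{F_1=-1,F_2=1\}$ are nonzero null vectors, hence not orthogonal to $w$. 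So the level set is $\{F_1=-1,\,F_2=1,\,\<X,v\>=0\}$, which is nonempty. On it, $dF_1$, $dF_2$ and $d(F_3-F_1)=-4\<X,w\>\<v,\cdot\>$ are independent. Otherwise $\<X,v\>=0$ would force $v$ to be a multiple of the null vector $X$.

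So all hypotheses hold, but on the part lying in $\Hy^k\times\Sf^{n-k+1}$, namely the umbilical hypersurface $V\cap(\Hy^k\times\Sf^{n-k+1})$, we have $\Phi\circ j=j$. Neither of your remedies works:
\begin{itemize}
\item Here $\rho=\pm2\lambda\neq0$ even though $g=j$, so $\rho\neq0$ does not exclude congruence.
\item Replacing $\Phi$ by $\Phi\sigma_V=T$ gives a factor-preserving isometry, which proves congruence rather than ruling it out.
\end{itemize}

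This is not an accident of the choice of $\Phi$: for umbilical $M$, congruence is forced. If $A_f=\lambda I$, Proposition~\ref{congL} gives $R_g=R_f-cI$ with $c=\pm\rho\lambda+\rho^2/2$. By \eqref{dimkerR}, both $R_f$ and $R_g$ have spectrum in $[0,1]$ containing $0$ and $1$, so $c=0$. Hence $\rho\in\{0,\mp2\lambda\}$, and together with $S_f=d\lambda$ (from \eqref{codazzi}) the data fall under Proposition~\ref{cong}. The same thing already happens in Example~\ref{exaweak}-(i), where $\Phi$ agrees on $V$ with the factor-preserving reflection $e_{k+2}\mapsto-e_{k+2}$.

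The direct part therefore needs an extra hypothesis: $M$ is not umbilical on any open subset, or equivalently $j(U)$ spans $\Les^{n+3}$ for every open $U\subset M$. Under that hypothesis your argument completes the proof ($T^{-1}\Phi=I$ on a spanning set, so $\Phi=T$), and it is more careful than the paper's, which silently makes the invalid inference.

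One minor point: the span of $j(U)$ is not independent of $U$ merely because $M$ is real-algebraic, since the level set can be reducible (in Example~\ref{exaweak}-(i) it lies in $V\cup W$). Argue on connected components using real-analyticity instead. Your converse coincides with the paper's.
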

\proof That $M$ is a submanifold of dimension $n$ of $\Les^{n+3}$ follows from the fact that $F$ has rank $3$ in an open neighborhood of $M$.  Since $M\subset F_1^{-1}(-1)\cap  F_2^{-1}(1)$, then $M\subset \Hy^{k}\times\Sf^{n-k+1}$. The fact that  $M\subset F_3^{-1}(-1)\cap  F_4^{-1}(1)$ implies that $\Phi(M)\subset 
\Hy^{k}\times\Sf^{n-k+1}$. Hence $\Phi\circ j\colon M\to \Hy^{k}\times\Sf^{n-k+1}$ is an isometric immersion that is not congruent to the inclusion $j\colon M\to \Hy^{k}\times\Sf^{n-k+1}$, for $\Phi$ does not preserve $\Hy^{k}\times\Sf^{n-k+1}$. Thus $j\colon M\to \Hy^{k}\times\Sf^{n-k+1}$ is deformable, with $\Phi\circ j$ a weakly congruent deformation of $j$.

Conversely, if $f\colon M\to \Hy^{k}\times\Sf^{n-k+1}$ admits a weakly congruent deformation $g\colon M\to \Hy^{k}\times\Sf^{n-k+1}$, denote by $\tilde f$ and $\tilde g$ the corresponding immersions into $\Les^{n+3}$, and let $\Phi$ be an isometry of $\Les^{n+3}$ that does not preserve $\Hy^{k}\times\Sf^{n-k+1}$ such that $\tilde g=\Phi\circ \tilde f$. 
Then $f(M)\subset F_\Phi^{-1}(-1,1,-1,1)$.  \qed

\begin{examples}\label{exaweak}
$(i)$ 
\emph{Let $\{e_1, \ldots, e_{k+1}, e_{k+2}, \ldots, e_{n+3}\}$ be an orthonormal basis of $\Les^{n+3}=\Les^{k+1}\oplus \R^{n-k+2}$ such that $\{e_1, \ldots, e_{k+1}\}$ is an orthonormal basis of 
$\Les^{k+1}$ with $\<e_1, e_1\>=-1$ and $\{e_{k+2}, \ldots, e_{n+3}\}$ is an orthonormal basis of  $\R^{n-k+2}$. 
We write any $x\in\Les^{n+3}$ as $x=\sum_{i=1}^{n+3}x_ie_i$.
For a fixed $\theta\in(0,\pi)$, let $\Phi \in O_1(n+3)$ be given by 
$$
\Phi(e_2)=\cos(\theta)e_2+\sin(\theta)e_{k+2}, \,\,\,\Phi(e_{k+2})=-\sin(\theta)e_2+\cos(\theta)e_{k+2}
$$
and $\Phi(e_i)=e_i$ if $1\leq i\leq n+3$, $i\neq 2$, $i\neq k+2$. Then $\Phi$ does not preserve the orthogonal decomposition $\Les^{n+3}=\Les^{k+1}\oplus \R^{n-k+2}$.
Let $F_{\Phi}$ be given as in Proposition \ref{dif}.
The subset $M=F_{\Phi}^{-1}(-1,1, -1,1)$, consists of the solutions of the system of equations
$$
\begin{cases}
    -1&=-x_1^2+\sum_{i=2}^{k+1}x_i^2\\
    1&=\sum_{j=k+2}^{n+3}x_j^2\\
    -1&=-x_1^2+(\cos(\theta)x_2-\sin(\theta)x_{k+2})^2+\sum_{i=3}^{k+1}x_i^2\\
    1&=(\sin(\theta)x_2+\cos(\theta)x_{k+2})^2+\sum_{j=k+3}^{n+3}x_j^2,
\end{cases}
$$
which is equivalent to 
$$
\begin{cases}
    -1&=-x_1^2+\sum_{i=2}^{k+1}x_i^2\\
    1&=\sum_{j=k+2}^{n+3}x_j^2\\
    0&=\sin^2(\theta)(x_2^2-x_{k+2}^2)+2\cos(\theta)\sin(\theta)x_2x_{k+2}.
\end{cases}
$$
Since $\theta\in(0,\pi)$, we can write the last equation as
$$
x_2^2+ax_2x_{k+2}-x_{k+2}^2=0,
$$
where $a=2\cot(\theta)$, or equivalently,
$$
\left(\frac{\sqrt{a^2+4}-a}{2}x_2+x_{k+2}\right)\left(\frac{\sqrt{a^2+4}+a}{2}x_2-x_{k+2}\right)=0.
$$
Thus $F_{\Phi}^{-1}(-1,1, -1,1)\subset V\cup W$, where $V$ and $W$ are the orthogonal complements of the space-like vectors 
$$
v=\frac{\sqrt{a^2+4}-a}{2}e_2+e_{k+2}\,\,\,\,\mbox{and}\,\,\,\,w=\frac{\sqrt{a^2+4}+a}{2}e_2-e_{k+2},
$$ 
respectively.
The umbilical hypersurfaces $M_1=V\cap(\Hy^k\times\Sf^{n-k+1})$ and $M_2=W\cap(\Hy^k\times\Sf^{n-k+1})$ are such that $\Phi(M_i)\subset\Hy^k\times\Sf^{n-k+1}$, $i=1,2$. Therefore, the inclusion
$j_i\colon M_i\to \Hy^k\times\Sf^{n-k+1}$, $i=1,2$, is deformable, with $\Phi\circ j_i$  a weakly congruent deformation of it. Notice that $\Phi\circ j_i$, $i=1,2$, is also umbilical, with
$\Phi(M_1)\cup \Phi(M_2)=F_{\Phi^{-1}}^{-1}(-1,1,-1,1)$.
\vspace{1ex}\\
$(ii)$ Let $\{e_1, \ldots, e_{k+1}, e_{k+2}, \ldots, e_{n+3}\}$ be an orthonormal basis of $\Les^{n+3}=\Les^{k+1}\oplus \R^{n-k+2}$ such that $\{e_1, \ldots, e_{k+1}\}$ is an orthonormal basis of 
$\Les^{k+1}$ as before.
For a fixed $\theta\in\R\setminus\{0\}$, define $\Phi \in O_1(n+3)$ by 
$$
\Phi(e_1)=\cosh(\theta)e_1+\sinh(\theta)e_{k+2}, \,\,\,\,\Phi(e_{k+2})=\sinh(\theta)e_1+\cosh(\theta)e_{k+2}
$$ 
and $\Phi(e_i)=e_i$ for $2\leq i\leq n+3$, $i\neq k+2$. 
Then $\Phi$ does not preserve the orthogonal decomposition $\Les^{n+3}=\Les^{k+1}\oplus \R^{n-k+2}$.
The subset $M=F_{\Phi}^{-1}(-1,1, -1,1)$ consists of the solutions to the system of equations
$$
\begin{cases}
    -1&=-x_1^2+\sum_{i=2}^{k+1}x_i^2\\
    1&=\sum_{j=k+2}^{n+3}x_j^2\\
    -1&=-(\cosh(\theta)x_1+\sinh(\theta)x_{k+2})^2+\sum_{i=2}^{k+1}x_i^2\\
    1&=(\sinh(\theta)x_1+\cosh(\theta)x_{k+2})^2+\sum_{j=k+3}^{n+3}x_j^2,
\end{cases}
$$
which is equivalent to 
$$
\begin{cases}
    -1&=-x_1^2+\sum_{i=2}^{k+1}x_i^2\\
    1&=\sum_{j=k+2}^{n+3}x_j^2\\
    0&=\sinh^2(\theta)(x_1^2+x_{k+2}^2)+2\cosh(\theta)\sinh(\theta)x_1x_{k+2}.
   \end{cases}
$$
Since $\theta\neq 0$, we can write the last equation as
$$
x_1^2+ax_2x_{k+2}+x_{k+2}^2=0,
$$
where $a=2\coth(\theta)$, or equivalently,
$$
\left(\frac{a+\sqrt{a^2-4}}{2}x_1+x_{k+2}\right)\left(\frac{a-\sqrt{a^2-4}}{2}x_1+x_{k+2}\right)=0.
$$
Notice that $|a|=|2\coth(\theta)|>2$.
The preceding equation can also be written as
$
\langle X, v\rangle \langle X, w\rangle=0$
for $X=\sum_{j=1}^{n+3} x_j e_j$, 
$$
v=-\frac{a+\sqrt{a^2-4}}{2}e_1+e_{k+2}
\,\,\,\,\mbox{and}\,\,\,\,
w=\frac{\sqrt{a^2-4}-a}{2}e_1+e_{k+2}.
$$
If $-\infty<a<-2$, then $v$ is space-like and $w$ is time-like, whereas $w$ is space-like and $v$ is time-like if $2<a<\infty$. Since $X\in \Hy^k\times\Sf^{n-k+1}$, then $\langle X, X\rangle=0$, hence $X$ can not be orthogonal to a time-like vector. Therefore
 $M=V\cap(\Hy^k\times\Sf^{n-k+1})$ if $-\infty<a<-2$, and $M=W\cap(\Hy^k\times\Sf^{n-k+1})$ if $2<a<\infty$, where $V$ and $W$ are the orthogonal complements of $v$ and $w$, respectively. Thus $M$ is an umbilical hypersurface in either case. 
A straightforward computation shows that
$$
\Phi(v)=-\sinh(\theta)\frac{\sqrt{a^2-4}}{2}\left(\frac{a+\sqrt{a^2-4}}{2}e_1+e_{k+2}\right)
$$
and 
$$
\Phi(w)=\sinh(\theta)\frac{\sqrt{a^2-4}}{2}\left(\frac{a-\sqrt{a^2-4}}{2}e_1+e_{k+2}\right).
$$
Calling $v'=\frac{a+\sqrt{a^2-4}}{2}e_1+e_{k+2}$ and $w'=\frac{a-\sqrt{a^2-4}}{2}e_1+e_{k+2}$, and denoting by $V'$ and $W'$  their respective orthogonal complements, we conclude that the inclusion $j\colon M\to \Hy^k\times\Sf^{n-k+1}$ is deformable, with either $M'= V'\cap(\Hy^k\times\Sf^{n-k+1})$ or $M'= W'\cap(\Hy^k\times\Sf^{n-k+1})$ as a weakly congruent deformation, according to whether $-\infty<a<-2$ or $2<a<\infty$, respectively. 
\vspace{1ex}\\
$(iii)$ Let
 $\{e_1, \ldots, e_{k+1}, e_{k+2}, \ldots, e_{n+3}\}$ be a pseudo-orthonormal basis of $\Les^{n+3}$ such that $\{e_1, \ldots, e_{k+1}\}$ is a pseudo-orthonormal basis of 
$\Les^{k+1}$ with $$\<e_1,e_1\>=0=\<e_2,e_2\>\,\,\,\mbox{and}\,\,\,\<e_1,e_2\>=1.$$
For a fixed $t\in\R\setminus\{0\}$, define $\Phi \in O_1(n+3)$ by $$\Phi(e_1)=e_1-\frac{t^2}{2}e_2+te_{k+2},\,\,\,\Phi(e_{k+2})=-te_2+e_{k+2}$$ and $\Phi(e_i)=e_i$ for $2\leq i\leq n+3$, $i\neq k+2$. 
Then $\Phi$ does not preserve the orthogonal decomposition $\Les^{n+3}=\Les^{k+1}\oplus \R^{n-k+2}$.
The subset $M=F_{\Phi}^{-1}(-1,1, -1,1)$ consists of the solutions of the system of equations
$$
\begin{cases}
    -1&=2x_1x_2+\sum_{i=3}^{k+1}x_i^2\\
    1&=\sum_{j=k+2}^{n+3}x_j^2\\
    -1&=2x_1(x_2-\frac{t^2}{2}x_1-tx_{k+2})+\sum_{i=3}^{k+1}x_i^2\\
    1&=(tx_1+x_{k+2})+\sum_{j=k+3}^{n+3}x_j^2,
\end{cases}
$$
which is equivalent to
\begin{equation}\label{syst}
\begin{cases}
    -1&=2x_1x_2+\sum_{i=3}^{k+1}x_i^2\\
    1&=\sum_{j=k+2}^{n+3}x_j^2\\
    0&=tx_1(\frac{t}{2}x_1+x_{k+2}).
    \end{cases}
\end{equation}
Since $t\neq 0$,  we can write the last equation as
$$
x_1\left(\frac{t}{2}x_1+x_{k+2}\right)=0.
$$
The first equation in \eqref{syst} implies that $x_1\neq 0$. Hence
$$
\frac{t}{2}x_1+x_{k+2}=0,
$$
which can be written as 
$\<X, v\>=0$,
where $X=\sum_{j=1}^{n+3} x_je_j$ and $v$ is the space-like vector
$$
v=\frac{t}{4}e_2+e_{k+2}.
$$
Thus $M=V\cap(\Hy^k\times\Sf^{n-k+1})$, where $V=\{v\}^\perp$, and the inclusion $j\colon M \to \Hy^k\times\Sf^{n-k+1}$ is a deformable hypersurface, with a weakly congruent deformation given by the umbilical hypersurface $M=V'\cap(\Hy^k\times\Sf^{n-k+1})$, where $V'$ is the orthogonal complement of $v'=\Phi(v)=-\frac{3t}{4}e_2+e_{k+2}$.}
\end{examples}

We now combine the examples above to prove the following.

\begin{proposition}\label{defumb}
    Any non-totally geodesic umbilical hypersurface of $\Hy^k\times\Sf^{n-k+1}$, $2\leq k\leq n-1$, admits a weakly congruent umbilical deformation.
\end{proposition}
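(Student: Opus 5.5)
By Example \ref{weak}, any umbilical hypersurface of $\Hy^k\times\Sf^{n-k+1}$ is, up to taking open subsets, of the form $M=V\cap(\Hy^k\times\Sf^{n-k+1})$, where $V=\{v\}^\perp$ is a time-like hyperplane of $\Les^{n+3}$, so that $v$ is space-like. The plan is to show that, after an isometry of $\Les^{n+3}$ preserving $\Les^{k+1}$ and $\R^{n-k+2}$, every such $V$ arising from a non-totally geodesic $M$ is exactly one of the hyperplanes produced in Examples \ref{exaweak} $(i)$, $(ii)$, $(iii)$. Those examples already exhibit the weakly congruent umbilical deformation $\Phi\circ j$. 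Applying such an isometry changes $M$ only by a congruence, so this suffices.

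Step 1 identifies which $v$ correspond to non-totally geodesic hypersurfaces. Write $v=v_1+v_2$ with $v_1\in\Les^{k+1}$ and $v_2\in\R^{n-k+2}$.
\begin{itemize}
\item If $v_1=0$, then $M=\Hy^k\times(\Sf^{n-k+1}\cap v_2^\perp)$ is a product with a great sphere, hence totally geodesic.
\item If $v_2=0$, then $v_1$ is space-like and $M=(\Hy^k\cap v_1^\perp)\times\Sf^{n-k+1}$ is again totally geodesic.
\end{itemize}
So we may assume $v_1\neq0\neq v_2$, and rescale $v$ so that $\|v_2\|=1$.

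Step 2 is a normalization using $O_1(k+1)\times O(n-k+2)$. First rotate $v_2$ to $e_{k+2}$. Then split according to the causal character of $v_1$.
\begin{itemize}
\item \emph{Space-like $v_1$:} move it to $c\,e_2$ with $c>0$. As $a$ ranges over $\R$, the coefficient $\frac{\sqrt{a^2+4}-a}{2}$ of $e_2$ in the vector $v$ of Example \ref{exaweak}$(i)$ takes every value in $(0,\infty)$. Hence $V$ is one of the hyperplanes of that example.
\item \emph{Time-like $v_1$:} move it to $c\,e_1$. Space-likeness of $v$ forces $1-c^2>0$, so $0<|c|<1$, and we may use $e_1\mapsto -e_1$ to fix the sign of $c$. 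In Example \ref{exaweak}$(ii)$ the surviving vector is $v$ for $a<-2$ and $w$ for $a>2$. Its $e_1$-coefficient sweeps all of $(-1,0)$, up to sign. So again $V$ occurs there.
\item \emph{Null $v_1$:} choose a pseudo-orthonormal basis with $v_1=s\,e_2$ for some $s\neq0$. The rescaling $e_1\mapsto \lambda e_1$, $e_2\mapsto \lambda^{-1}e_2$ is allowed, and in any case $t=4s$ ranges over $\R\setminus\{0\}$. Thus $v=\frac t4e_2+e_{k+2}$ is exactly the vector of Example \ref{exaweak}$(iii)$.
\end{itemize}

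Step 3 concludes. In each case the corresponding example produces an isometry $\Phi$ of $\Les^{n+3}$ that does not preserve $\Les^{k+1}\oplus\R^{n-k+2}$ and satisfies $\Phi(M)\subset\Hy^k\times\Sf^{n-k+1}$. Then $\Phi\circ j$ is a weakly congruent deformation of the inclusion $j$, by the definition or by Proposition \ref{dif}. Moreover $\Phi(M)=\Phi(V)\cap(\Hy^k\times\Sf^{n-k+1})$, and $\Phi(V)$ is again a time-like hyperplane, so $\Phi\circ j$ is umbilical.

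The main obstacle I expect is the bookkeeping in Step 2: checking that the parameter ranges in Examples \ref{exaweak} really exhaust all admissible normalized $v$, especially the sign and range issues in the time-like case $(ii)$. A second point to handle carefully is the dichotomy of Step 1, namely that $v_1=0$ or $v_2=0$ are precisely the totally geodesic cases. This can be confirmed by computing the shape operator of $V\cap(\Hy^k\times\Sf^{n-k+1})$ directly from \eqref{sfft}.
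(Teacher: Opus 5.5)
Your proposal is correct and follows essentially the paper's proof. The paper likewise writes $M=\{v\}^\perp\cap(\Hy^k\times\Sf^{n-k+1})$ and uses $v_1\neq 0\neq v_2$, which it quotes from an external reference whereas you verify the needed direction directly; it then splits by the causal character of $v_1$ and matches each case with the corresponding part of Examples~\ref{exaweak} via an adapted basis. One small correction: the reflection $e_1\mapsto -e_1$ swaps the two sheets of the hyperboloid, so it is not an isometry of $\Hy^k\times\Sf^{n-k+1}$, but you do not need it, since Examples~\ref{exaweak}$(ii)$ already gives $e_1$-coefficients $c\in(0,1)$ for $a<-2$ and $c\in(-1,0)$ for $a>2$ (alternatively, use $e_{k+2}\mapsto -e_{k+2}$ together with $v\mapsto -v$).
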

\proof  Let $M$ be a non-totally geodesic umbilical hypersurface of $\Hy^k\times\Sf^{n-k+1}$, $2\leq k\leq n-1$. As pointed out in Example \ref{weak}, there exists a unit-length space-like vector $v\in\Les^{n+3}=\Les^{k+1}\oplus \R^{n-k+2}$ such that $M=V\cap (\Hy^k\times\Sf^{n-k+1})$ 
of $\Hy^k\times\Sf^{n-k+1}$, where $V=\{v\}^\perp$.  
 Write $v=v_1+v_2$ according to the orthogonal decomposition $\Les^{n+3}=\Les^{k+1}\oplus \R^{n-k+2}$. Since  $M$ is not totally geodesic, then $v_1\neq 0\neq v_2$ (see Corollary $5$ of \cite{JT}). We consider separately the three possible cases: \vspace{1ex}\\
 $(i)$ $v_1$ is space-like: 
Fix an orthonormal basis $\{e_1, \ldots, e_{k+1}, e_{k+2}, \ldots, e_{n+3}\}$ of $\Les^{n+3}$ such that $\{e_1, \ldots, e_{k+1}\}$ is an orthonormal basis of 
$\Les^{k+1}$ with $\<e_1, e_1\>=-1$ and $\{e_{k+2}, \ldots, e_{n+3}\}$ is an orthonormal basis of  $\R^{n-k+2}$, where
$e_2=v_1/\|v_1\|$ and $e_{k+2}=v_2/\|v_2\|$.
Let $a\in\R$ be given by
$$
\frac{\sqrt{a^2+4}-a}{2}=\frac{\|v_1\|}{\|v_2\|}.
$$
Then
$$
\|v_2\|^{-1}v=(\|v_1\|/\|v_2\|)e_2+e_{k+2}=\frac{\sqrt{a^2+4}-a}{2}e_2+e_{k+2}.
$$
Therefore, if we let $\theta\in(0,\pi)$ be a solution of $a=2\cot(\theta)$, and define $\Phi \in O_1(n+3)$ by 
$$\Phi(e_2)=\cos(\theta)e_2+\sin(\theta)e_{k+2},\,\,\,\,\mbox{and}\,\,\,\,\Phi(e_{k+2})=-\sin(\theta)e_2+\cos(\theta)e_{k+2}$$ and $\Phi(e_i)=e_i$ if $1\leq i\leq n+3$, $i\notin\{2, k+2\}$, we are in the situation of Examples \ref{exaweak}-$(i)$, with $M$ being the umbilical hypersurface $M_1$ therein. We conclude that the inclusion $j\colon M\to \Hy^k\times\Sf^{n-k+1}$ is deformable, with a weakly congruent umbilical deformation.\vspace{1ex}\\
$(ii)$  $v_1$ is time-like: 
Fix an orthonormal basis $\{e_1, \ldots, e_{k+1}, e_{k+2}, \ldots, e_{n+3}\}$ of $\Les^{n+3}=\Les^{k+1}\oplus \R^{n-k+2}$ such that $\{e_1, \ldots, e_{k+1}\}$ is an orthonormal basis of 
$\Les^{k+1}$, where
$e_1=v_1/\sqrt{-\<v_1,v_1\>}$ and $e_{k+2}=v_2/\|v_2\|$.
Let $a\in\R$, $a<-2$, be given by
$$
-\frac{a+\sqrt{a^2+4}}{2}=
\frac{\sqrt{-\<v_1,v_1\>}}{\|v_2\|}.
$$
Then
$$
\|v_2\|^{-1}v=\|v_2\|^{-1}
\sqrt{-\<v_1,v_1\>}e_1+e_{k+2}
=-\frac{a+\sqrt{a^2+4}}{2}e_2+e_{k+2}.
$$
Let $\theta\in\R\setminus\{0\}$ be a solution of $a=2\coth(\theta)$, and define $\Phi \in O_1(n+3)$ by 
$$\Phi(e_1)=\cosh(\theta)e_1+\sinh(\theta)e_{k+2},\,\,\,\, \Phi(e_{k+2})=\sinh(\theta)e_2+\cosh(\theta)e_{k+2}$$ and $\Phi(e_i)=e_i$ for $2\leq i\leq n+3$, $i\neq k+2$.
Then we are in the situation of Examples \ref{exaweak}-$(ii)$, and we conclude that the inclusion $j\colon M\to \Hy^k\times\Sf^{n-k+1}$ is deformable, with the umbilical hypersurface $M'=\{v'\}^\perp\cap \Hy^k\times\Sf^{n-k+1}$,  $v'=\frac{a+\sqrt{a^2-4}}{2}e_1+e_{k+2}$, as a weakly congruent deformation.\vspace{1ex}\\
$(iii)$ $v_1$ is light-like:
Let $\{e_1, \ldots, e_{k+1}, e_{k+2}, \ldots, e_{n+3}\}$ be a pseudo-orthonormal basis of $\Les^{n+3}=\Les^{k+1}\oplus \R^{n-k+2}$ such that $\{e_1, \ldots, e_{k+1}\}$ is a pseudo-orthonormal basis of 
$\Les^{k+1}$, with $e_2=v_1$ and $e_{k+2}=v_2/\|v_2\|$. Let $t\in\R$ be given by
$
t=4\|v_2\|^{-1},
$
so that 
$$
\|v_2\|^{-1}v=\frac{t}{4}e_2+e_{k+2}.
$$
Define $\Phi \in O_1(n+3)$ by
$$\Phi(e_1)=e_1-\frac{t^2}{2}e_2+te_{k+2},\,\,\,\,\Phi(e_{k+2})=-te_2+e_{k+2}$$ and $\Phi(e_i)=e_i$ for $2\leq i\leq n+3$, $i\neq k+2$. Then we are in the situation of Examples \ref{exaweak}-$(iii)$. We conclude that the inclusion $j\colon M\to \Hy^k\times\Sf^{n-k+1}$ is deformable, with the umbilical hypersurface $M'=\{v'\}^\perp\cap \Hy^k\times\Sf^{n-k+1}$, 
 $v'=\Phi(v)=-\frac{3t}{4}e_2+e_{k+2}$,  as a weakly congruent deformation.\qed \vspace{1ex}

 In the following example, we combine the isometries $\Phi \in O_1(n+3)$ in parts $(i)$ and $(ii)$ of Examples \ref{exaweak} to construct a two-parameter family of non-umbilical hypersurfaces of $\Hy^{k}\times\Sf^{n-k+1}$ that admit weakly congruent deformations.

\begin{example}\label{nonumbilical}\emph{Fix an orthonormal basis $\{e_1, \ldots, e_{k+1}, e_{k+2}, \ldots, e_{n+3}\}$ of $\Les^{n+3}=\Les^{k+1}\oplus \R^{n-k+2}$ such that $\{e_1, \ldots, e_{k+1}\}$ is an orthonormal basis of 
$\Les^{k+1}$, with $\<e_1, e_1\>=-1$. Define $\Phi \in O_1(n+3)$ by 
$$
\Phi(e_1)=\cosh(\theta)e_1+\sinh(\theta)e_{k+2},\,\,\,\, \Phi(e_{k+2})=\sinh(\theta)e_1+\cosh(\theta)e_{k+2},
$$ 
$$
\Phi(e_2)=\cos(\rho)e_2+\sin(\rho)e_{k+3},\,\,\,\, \Phi(e_{k+3})=-\sin(\rho)e_2+\cos(\rho)e_{k+3}
$$
and $\Phi(e_i)=e_i$ for $i\not \in \{1,2, k+2, k+3\}$. Then $\Phi$ does not preserve the orthogonal decomposition $\Les^{n+3}=\Les^{k+1}\oplus \R^{n-k+2}$.
The subset $M=F_{\Phi}^{-1}(-1,1, -1,1)$ consists of the solutions to the system of equations
$$
\begin{cases}
    -1&=-x_1^2+\sum_{i=2}^{k+1}x_i^2\\
    1&=\sum_{j=k+2}^{n+3}x_j^2\\
    -1&=-(\cosh(\theta)x_1+\sinh(\theta)x_{k+2})^2+(\cos \,\rho x_2-\sin \,\rho x_{k+3})^2+\sum_{i=2}^{k+1}x_i^2\\
    1&=(\sinh(\theta)x_1+\cosh(\theta)x_{k+2})^2+(\sin \,\rho x_2+\cos \, \rho x_{k+3})^2+\sum_{j=k+3}^{n+3}x_j^2.
\end{cases}
$$
Writing $F_\Phi=(F_1, F_2, F_3, F_4)$, a straightforward computation shows that $dF_1-dF_3=dF_4-dF_2$, hence $F_\Phi$ has rank three. It remains to check that $M\neq  \emptyset$. Taking the differences between the first and the third, and then between the second and fourth equations, shows that the preceding system of equations is equivalent to
$$
\left\{ \begin{array}{l} -x_1^2+\sum_{i=2}^{k+1} x_i^2=-1\\
\sum_{j=k+2}^{n+3} x_j^2=1\\
\sinh^2\theta x_1^2+\sinh^2\theta x_{k+2}^2+2\cosh \theta\sinh \theta x_1x_{k+2}\\
+\sin^2\rho x_2^2-\sin^2\rho x_{k+3}^2+2\cos \rho\sin \rho x_2x_{k+3}=0.
\end{array}
\right.
$$
Note that the last equation only contains quadratic terms, hence it represents a cone through the origin. Note also that any generator of the light cone intersects $\mathbb{H}^k\times \mathbb{S}^{n-k+1}$. Hence, it suffices to show that the above system has some solution in the light cone. We look for solutions $(x_1, \ldots, x_{k+3})$ such that 
$x_1=1$,  $x_{k+3}=x_2$ and $x_i=0$ for $i\not\in \{1, 2, k+3\}$, and hence  $2x^2_2=1-x_{k+2}^2$. In this case, the last equation reduces to
$$
\sinh^2\theta +\sinh^2\theta x_{k+2}^2+2\cosh \theta\sinh \theta x_{k+2}\
+\cos \rho\sin \rho (1-x_{k+2})=0,
$$
that is,
$$
(\sinh^2\theta -\cos \rho \sin \rho) x_{k+2}^2 + 2 \cosh \theta \sinh \theta x_{k+2} +(\sinh^2\theta+\cos \rho \sin\rho)=0,$$
whose discriminant is
\begin{eqnarray*}
D&=&4\cosh^2 \theta \sinh^2 \theta - 4(\sinh^2 \theta-\cos \rho \sin\rho)(\sinh^2 \theta+\cos \rho \sin\rho)\\
&=&4(\cosh^2 \theta \sinh^2 \theta-(\sinh^4 \theta-\cos^2 \rho \sin^2\rho))\\
&=&4(\sinh^2 \theta+\cos^2 \rho \sin^2\rho)\geq 0.
\end{eqnarray*}}
\end{example}

\section{Truly deformable hypersurfaces}\label{Sec:truly}

In this section, we study the truly deformable hypersurfaces of $\Hy^{k}\times\Sf^{n-k+1}$, $2\leq k\leq n-1$.

\subsection{Necessary conditions} 

  We start by showing that it is enough to consider hypersurfaces whose index of relative nullity is less than or equal to $n-2$ at every point. \vspace{1ex}

  \begin{proposition}\label{split2}
If the hypersurface $f\colon M^n\to \Hy^{k}\times\Sf^{n-k+1}$, $2\leq k\leq n-1$, is either totally geodesic or has a constant index of relative nullity $\nu_f=n-1$ at every point, then $f$ splits locally.  
\end{proposition}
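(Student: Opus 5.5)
The plan is to show that $S_f=0$ on $M^n$ and then invoke Proposition~\ref{splits}, which says this is equivalent to $f$ splitting locally. The tools are the Codazzi equation \eqref{codazzi} and the derivative formula \eqref{derS2} for $\xi$. Note that $2\leq k\leq n-1$ forces $n\geq 3$, which is used below.

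\emph{Totally geodesic case.} If $A=0$, then \eqref{codazzi} reduces to $(X\wedge Y)\xi=\<Y,\xi\>X-\<X,\xi\>Y=0$ for all $X,Y$. At a point where $\xi\neq 0$, take $Y=\xi$ and $X$ a nonzero vector orthogonal to $\xi$. This gives $\|\xi\|^2X=0$, a contradiction. Hence $\xi=0$, that is, $S_f=0$.

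\emph{Case $\nu_f=n-1$.} Locally write $A=\lambda\<\cdot,e\>e$, with $e$ a unit vector field and $\lambda$ vanishing nowhere, and let $\Delta=\ker A=\{e\}^\perp$, of rank $n-1\geq 2$.

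First I show that $\xi$ is orthogonal to $\Delta$. For $X,Y\in\Gamma(\Delta)$ we have $(\nabla_XA)Y=-A\nabla_XY$, so \eqref{codazzi} becomes
$$-A[X,Y]+\<Y,\xi\>X-\<X,\xi\>Y=0 .$$
The first term lies in $\spa\{e\}$. Taking $\Delta$-components and choosing $X,Y\in\Delta$ linearly independent gives $\<X,\xi\>=\<Y,\xi\>=0$. Hence $\xi=\varphi e$ for some smooth function $\varphi$.

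Next I extract two equations for $\nabla_Xe$, $X\in\Gamma(\Delta)$.

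From \eqref{codazzi} with $X\in\Gamma(\Delta)$ and $Y=e$, take the $\Delta$-component:
\begin{itemize}
\item $(\nabla_eA)X=-A\nabla_eX$ lies in $\spa\{e\}$, so its $\Delta$-component vanishes;
\item $(\nabla_XA)e=X(\lambda)e+\lambda\nabla_Xe-A\nabla_Xe$, and since $\nabla_Xe\in\Delta$ the last term vanishes, so the $\Delta$-component is $\lambda\nabla_Xe$;
\item $(X\wedge e)\xi=\varphi X$.
\end{itemize}
This yields $\lambda\nabla_Xe+\varphi X=0$.

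From \eqref{derS2}, $\nabla_X\xi=(tI-R)AX=0$ for $X\in\Delta$. Thus $X(\varphi)e+\varphi\nabla_Xe=0$, and since $\nabla_Xe\perp e$ this gives $\varphi\nabla_Xe=0$.

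On the open set where $\varphi\neq0$, the second equation gives $\nabla_Xe=0$, and then the first gives $\varphi X=0$ for all $X\in\Delta$, which is impossible. Therefore $\varphi\equiv0$, so $\xi=0$ and $S_f=0$.

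In both cases Proposition~\ref{splits} yields that $f$ splits locally. The only delicate step is the bookkeeping of $\Delta$-components in the mixed Codazzi equation with $Y=e$, in particular the observation that $\nabla_Xe\in\Delta$ is annihilated by $A$. Everything else is direct.
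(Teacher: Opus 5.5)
Your proof is correct and follows essentially the same route as the paper. In both, Codazzi on $\Delta$ gives $\Delta\subset\ker S_f$, and its mixed form gives $S_f(e)=\lambda\<\nabla_YY,e\>$ (your $\lambda\nabla_Xe+\varphi X=0$); then \eqref{derS} along $\Delta$ gives $S_f(e)\<\nabla_YY,e\>=0$ (your $\varphi\nabla_Xe=0$, obtained from the vector form \eqref{derS2}), which forces $S_f=0$, and Proposition~\ref{splits} yields the splitting.
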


\proof
Let $\Delta$ stand for the relative nullity distribution of $f$. If $\Delta=TM$, that is, $f$ is totally geodesic, applying  
\eqref{codazzi} to linearly independent vector fields $X$ and $Y$ implies that the vector field $\xi_f$, and hence the tensor $S_f$, vanishes identically.
Thus the statement follows from  Proposition \ref{splits}. This also follows from the classification of totally geodesic submanifolds of $\mathbb{H}^k\times \mathbb{S}^{n-k+1}$ (see, e.g., Proposition $4$ of \cite{JT}).

Assume now that $\Delta$ has dimension $n-1$ everywhere. Applying  again
\eqref{codazzi} to linearly independent vector fields $X, Y\in \Gamma(\Delta)$ implies that $\Delta\subset\ker S_f$. 
Now let $X\in\Gamma(\Delta^\perp)$, $Y\in\Gamma(\Delta)$, and denote by $\lambda_f$ the non-zero principal curvature of $A_f$. Then, applying \eqref{codazzi} for $X$ and $Y$ gives
$$
S_f(X)=\lambda_f\<\nabla_YY,X\>.
$$
On the other hand, evaluating \eqref{derS} in $Y$ yields
$$
Y(S_f(Y))-S_f(\nabla_YY)=-S_f(X)\<\nabla_YY,X\>=0.
$$
By the previous equations, we necessarily have $S_f(X)=0$, and hence, by Proposition \ref{splits}, the hypersurface $f$ splits locally.
\qed
\vspace{1ex}

\begin{proposition}\label{commoneig}
Let $f\colon M^n\to \Hy^{k}\times\Sf^{n-k+1}$, $n\geq 5$, $2\leq k\leq n-1$, be an isometric immersion whose index of relative nullity is less than or equal to $n-2$ at every point. Assume that $f$ is truly deformable and let $g\colon M^n\to \Hy^{k}\times\Sf^{n-k+1}$ be a true deformation of $f$.  
Then $A_f$, $A_g$ and $R_f-R_g$ share a common eigenbundle of rank at least $(n-2)$ on any connected component of an open and dense subset of $M^n$. 
\end{proposition}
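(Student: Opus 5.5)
The plan is to work with the flat bilinear form $\beta\colon T_xM\times T_xM\to\R^{2,2}$ of Proposition \ref{prop:flat} and apply the standard structure theory for flat bilinear forms with values in a space of low signature (as in Chapter~4 of \cite{DT}). Since $g$ is a true deformation and $\nu_g\le n-2$ (or $\nu_f\le n-2$), Proposition \ref{prop:null} shows that $\beta$ is not null on any open subset. Hence on an open dense subset $\beta$ is non-null at each point, and we may restrict to connected components where the relevant dimensions are constant.

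The key algebraic input is the following. For a flat bilinear form $\beta\colon V\times V\to W^{p,q}$ with $p+q$ small relative to $\dim V$, the subspace $S(\beta)=\spa\beta(V,V)$ decomposes as $S(\beta)=U\oplus(S(\beta)\cap S(\beta)^\perp)$. The null part $\gamma$ (the projection onto the degenerate part) is null, and the nondegenerate part $\theta$ satisfies $\dim N(\theta)\ge n-\dim U$. Here $W=\R^{2,2}$, and a nondegenerate subspace $U\subset\R^{2,2}$ on which a non-null flat form takes values has $\dim U\le 2$. Moreover, since $\theta$ is not null, $U$ must actually carry indefinite components, so one gets $\dim N(\theta)\ge n-2$.

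I will follow Moore's lemma route:
\begin{enumerate}
\item Show that $\beta$ is flat and symmetric, so one can pass to the kernel $N(\theta)=\{X:\theta(X,\cdot)=0\}$ with $\dim N(\theta)\ge n-2$.
\item Use $n\ge5$ to exclude degenerate configurations. If $S(\beta)$ were degenerate of maximal type (dimension 2 isotropic), then $\beta$ would be null, which is a contradiction. In the remaining case, $\beta|_{N(\theta)\times V}$ takes values in an isotropic line $\spa\{w\}$, with $w=(w_1,w_2,w_3,w_4)$ satisfying $w_1^2+w_2^2=w_3^2+w_4^2$, so that $\beta(X,Y)=\varphi(X,Y)w$ for $X\in N(\theta)$.
\end{enumerate}

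Reading off components gives, for $X\in\Delta:=N(\theta)$ and all $Y$,
\[
\langle A_fX,Y\rangle=w_1\varphi(X,Y),\quad \langle A_gX,Y\rangle=w_3\varphi(X,Y),\quad \langle (R_f-R_g\pm I)X,Y\rangle=\sqrt2\,w_{2,4}\varphi(X,Y).
\]
Subtracting the last two yields $\langle X,Y\rangle=\tfrac{1}{\sqrt2}(w_2-w_4)\varphi(X,Y)$. Hence $w_2\ne w_4$ and $\varphi|_{\Delta\times V}$ is a multiple of the metric. Therefore $A_f$, $A_g$ and $R_f-R_g$ each act on $\Delta$ as scalar multiples of the identity, and they map $\Delta$ into $\Delta$. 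In other words, $\Delta$ lies in a common eigenspace of all three, with rank at least $n-2$.

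Smoothness of the resulting eigenbundle will follow on the open dense subset where the eigenvalue multiplicities are locally constant.

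The main obstacle is the algebraic lemma giving $\dim N(\theta)\ge n-2$ in signature $(2,2)$. In particular, one must rule out the case where $U$ is nondegenerate of dimension $3$ or $4$, or has a degenerate part of dimension $2$. For this I would first try the standard estimate $\dim N(\beta)\ge n-\dim S(\beta)$ for flat forms whose image is positive definite, applied after splitting off a maximal isotropic part. I would use $n\ge5$ together with non-nullity to force the isotropic part to have dimension at most one. If that fails, a direct case analysis on the rank of the degenerate part, combined with the explicit relation $\psi_+-\psi_-=\sqrt2 I$, should force a one-dimensional isotropic image on a codimension-$2$ subspace.
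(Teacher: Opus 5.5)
\textbf{There is a genuine gap: the algebraic lemma.} This lemma carries the whole proposition, and you leave it unproved ("main obstacle", "I would first try"). Worse, as you formulate it, it is false.

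The claim that a non-null flat form takes values in a nondegenerate $U\subset\R^{2,2}$ with $\dim U\le 2$, and hence $\dim N(\theta)\ge n-2$, fails in general. Take $\beta(e_i,e_j)=\delta_{ij}v_i$, with $v_1,\dots,v_4$ an orthonormal basis of $\R^{2,2}$ and $v_i=0$ for $i\ge 5$. This form is symmetric, flat and non-null. Yet $S(\beta)=\R^{2,2}$ is nondegenerate, there is no isotropic line, and $N(\theta)=\mathcal N(\beta)$ has dimension $n-4$. In particular, nothing in your outline excludes the case $S(\beta)\cap S(\beta)^\perp=0$, which your "remaining case" silently skips.

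Nor can you fall back on the Euclidean estimate $\dim\mathcal N\ge n-\dim S$ by splitting off isotropic vectors. With a one-dimensional radical $\spa\{w\}$, the quotient $w^\perp/\spa\{w\}$ still has signature $(1,1)$.

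The hypothesis that excludes such configurations, and which you never isolate, is that $\beta$ has trivial nullity. If $\beta(X,T)=0$ for all $X$, then $\<\psi_+X,T\>=\<\psi_-X,T\>=0$, so $\sqrt2\<X,T\>=0$ and $T=0$. For flat forms into $\R^{2,2}$ with trivial nullity on a space of dimension at least $5$, the needed structure result is Lemma 4.22 in \cite{DT}, a signature-$(2,2)$ analogue of Moore's lemma. It gives an orthogonal splitting $\R^{2,2}=W_1^{1,1}\oplus W_2^{1,1}$ with $\beta=\beta_1+\beta_2$, where $\beta_1$ is nonzero and null and $\beta_2$ is flat with $\dim\mathcal N(\beta_2)\ge n-2$. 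Non-nullness of $\beta$ is what rules out $W_1=\R^{2,2}$. This is exactly where $n\ge 5$ enters, and it is what the paper invokes. A loose case analysis on the radical does not replace it.

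\textbf{Your endgame is correct once that lemma is granted, and it is shorter than the paper's.} For $T\in\mathcal N(\beta_2)$ one has $\beta(T,\cdot)=\beta_1(T,\cdot)\in S(\beta_1)$. This is a totally isotropic subspace of $W_1^{1,1}$, hence a fixed isotropic line $\spa\{w\}$. Your component computation with $\psi_+-\psi_-=\sqrt2\, I$ then shows at once that $\mathcal N(\beta_2)$ lies in a common eigenspace of $A_f$, $A_g$ and $R_f-R_g$.

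The paper instead uses only the orthogonality $\beta(\cdot,T)\perp S(\beta)$, i.e. \eqref{reltens}. It then shows that each $T\in\mathcal N(\beta_2)$ is a principal direction of $g$ by a second contradiction with weak congruence, via Proposition~\ref{congL} and Lemma~\ref{12imp3}. Your isotropic-line observation makes that second use of the true-deformation hypothesis unnecessary.
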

\proof
It follows from the assumptions and Proposition \ref{prop:null}  that the associated flat bilinear form $\beta$ is not null on any open subset of $M^n$. Then, we consider a connected component of the open and dense subset where $\beta$ is not null. Notice that $\beta(x)$ has trivial kernel for any $x\in M^n$, for if $T\in\mathcal{N}(\beta(x))$, that is, $\beta(X,T)=0$ for all $X\in T_xM$, then  
$$
\<\psi_+X,T\>=0=\<\psi_-X,T\>,
$$
which implies that $\<X,T\>=0$  for all  $X\in T_xM$, and hence $T=0$.
We are therefore in a position to apply Lemma 4.22 in \cite{DT}, which states that, at each point $x\in M^n$, there exists an orthogonal decomposition
$$
\R^{2,2}=W_1^{1,1}\oplus W_2^{1,1}
$$
such that $\beta(x)$ decomposes accordingly as $\beta(x)=\beta_1(x)+\beta_2(x)$, where $\beta_1(x)$ is nonzero and null, $\beta_2(x)$ is flat and $\dim\mathcal{N}(\beta_2(x))\geq n-2$.

From now on, we argue on a connected component of an open and dense subset of $M^n$ where $\beta_2$ is not null and $\dim\mathcal{N}(\beta_2)\geq n-2$ is constant. 
For any $T\in\Gamma(\mathcal{N}(\beta_2))$, we have
$$
\<\!\<\,\beta(X,T),\beta(Y,Z)\>\!\>=0
$$
for all $X,Y,Z\in\mathfrak{X}(M)$. The previous equation is equivalent to
\be\label{reltens}
\<A_fX,T\>A_f-\<A_gX,T\>A_g+\<X,T\>(R_f-R_g)+\<(R_f-R_g)X,T\>I=0
\ee
for all $X\in\mathfrak{X}(M)$.
If $T$ is not a principal direction of $g$, then there exists $X$ orthogonal to $T$ such that $\<A_gX,T\>\neq 0$. For such $X$ and $T$, Eq. \eqref{reltens} becomes
$$
\<A_fX,T\>A_f-\<A_gX,T\>A_g+\<(R_f-R_g)X,T\>I=0.
$$
Notice that we necessarily have $\<A_fX, T\>\neq0$, because, otherwise, $A_g$ would be a multiple of the identity endomorphism, in contradiction with the fact that  $\<A_gX,T\>\neq 0$ with  $X$ orthogonal to $T$. This implies that $A_f$ and $A_g$ commute, and, therefore,  they also commute with $(R_f-R_g)$ by \eqref{reltens}. Hence they are simultaneously diagonalizable.
From the assumption that $T$ is not an eigenvector of $A_g$, it follows that there exist two principal directions, $X_1$ and $X_2$, associated with distinct eigenvalues $\mu_1\neq\mu_2$ of $A_g$, such that $\<X_i,T\>\neq 0$, $i=1,2$. Denote by $\lambda_i$ and $r_i$ the eigenvalues of $A_f$ and $(R_f-R_g)$, respectively, that correspond to $X_i$. Then 
$$
\lambda_iA_f-\mu_iA_g+(R_f-R_g)+r_iI=0
$$
for $i=1,2$. Taking the difference, we obtain
$$
(\lambda_1-\lambda_2)A_f-(\mu_1-\mu_2)A_g+(r_1-r_2)I=0,
$$
and we conclude that $\lambda_1\neq\lambda_2$. Then, evaluating the preceding equation on $X_i$, $i=1,2$, and taking the difference, yield
$$
(\lambda_1-\lambda_2)^2=(\mu_1-\mu_2)^2.
$$
Changing the orientation of $g$, if necessary, we can assume that $(\lambda_1-\lambda_2)=(\mu_1-\mu_2)$. Denoting $\rho=(r_1-r_2)/(\mu_1-\mu_2)$, we see that \eqref{cong1} holds.
It follows that $\mu_i=\lambda_i+\rho$,  hence $2r_i=2\rho\lambda_i+\rho^2$.
Substituting \eqref{cong1} and the previous relations in \eqref{reltens} yields
\begin{align*}
    0&=\lambda_iA_f-(\lambda_i+\rho)(A_f+\rho I)+(R_f-R_g)+r_i I\\
    &=R_f-R_g-\rho A_f-\frac{\rho^2}{2}I,
\end{align*}
which is \eqref{cong2}. Thus, Proposition \ref{congL} and Lemma \eqref{12imp3} lead to a contradiction.

Therefore, any $T\in\Gamma(\mathcal{N}(\beta_2))$ is an eigenvector of $A_g$, and \eqref{reltens} implies that $\mathcal{N}(\beta_2)$ is in fact a common eigenbundle of $A_f$, $A_g$ and $(R_f-R_g)$.\vspace{1ex}
\qed

Proposition \ref{commoneig} implies the following result in the line of the Beez-Killing and Cartan's theorems (see Theorem $4.13$ and Corollary $9.25$ in \cite{DT}).

\begin{corollary} \label{beez} If a hypersurface $f\colon M^n\to \mathbb{H}^k\times \mathbb{S}^{n-k+1}$, $n\geq 5$, $2\leq k\leq n-1$, has no principal curvature of multiplicity greater than $n-3$, then it is weakly rigid.
\end{corollary}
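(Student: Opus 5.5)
We argue by contradiction and show that $f$ admits no true deformation. Suppose $f$ is not weakly rigid. Then there is an isometric immersion $g\colon M^n\to \Hy^k\times\Sf^{n-k+1}$ that is not weakly congruent to $f$ on some open subset $U\subset M^n$. Restricting to $U$ and shrinking it, we may assume that $g$ is not weakly congruent to $f$ on any open subset of $U$. In other words, $g|_U$ is a true deformation of $f|_U$.

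\textbf{Step 1: relative nullity is at most $n-2$.} Let $\lambda$ be any principal curvature of $f$ at a point of $U$. By hypothesis its multiplicity is at most $n-3$. Applying this to $\lambda=0$ gives $\nu_f\le n-3\le n-2$ at every point. Thus $f|_U$ satisfies the relative nullity hypothesis of Proposition~\ref{commoneig}.

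\textbf{Step 2: apply Proposition~\ref{commoneig}.} Since $n\geq 5$ and $2\leq k\leq n-1$, the proposition applies to $f|_U$ and $g|_U$. It yields a nonempty open subset $V\subset U$ and a common eigenbundle of $A_f$, $A_g$ and $R_f-R_g$ on $V$ of rank at least $n-2$. In particular, $A_f$ restricted to this bundle is a multiple of the identity. So some principal curvature of $f$ has multiplicity at least $n-2>n-3$ at the points of $V$. This contradicts the hypothesis, so $f$ is weakly rigid.

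\textbf{Main point to check.} The only delicate issue is the localization. Proposition~\ref{commoneig} is stated for a true deformation of $f$ on all of $M^n$. We must therefore make sure that the failure of weak congruence on an open set can be upgraded to "not weakly congruent on any open subset" after shrinking. This follows by taking the interior of the set where $\beta$ is not null, which by Proposition~\ref{prop:null} is nonempty. Indeed, if $\beta$ were null on all of $U$, then, since $\nu_f\le n-2$, the immersions $f$ and $g$ would be weakly congruent on each connected component of $U$.
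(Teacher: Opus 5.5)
Your proposal is correct and follows the paper's own route: the paper only remarks that the corollary follows from Proposition~\ref{commoneig}, using exactly your observations that the multiplicity bound gives $\nu_f\le n-3\le n-2$ and that a common eigenbundle of rank at least $n-2$ violates the hypothesis. Your localization step fills in a detail the paper leaves implicit: take $U=M^n$ connected, so that nullity of $\beta$ would give weak congruence on all of $M^n$ by Proposition~\ref{prop:null}, and then restrict to the open set where $\beta$ is not null.
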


   For $n\geq 6$, Proposition \ref{commoneig}  can be improved as follows. 
   
  \begin{proposition} \label{commoneig2}
    Let $f\colon M^n\to \Hy^{k}\times\Sf^{n-k+1}$, $n\geq 6$, $2\leq k\leq n-1$, be an isometric immersion whose index of relative nullity is less than or equal to $n-2$ at every point. Assume that $f$ is truly deformable and let $g\colon M^n\to \Hy^{k}\times\Sf^{n-k+1}$ be a true deformation of $f$.  Then $A_f$ and $A_g$  share a common eigenbundle $\Delta$ of rank $n-1$ or $n-2$ on each connected component of an open and dense subset of $M^n$, the eigenvalues of $A_f|_\Delta$ and $A_g|_\Delta$ coincide (up to sign), $R_f|_{\Delta}=R_g|_{\Delta}$, and
    \be\label{reltens3}
\lambda(A_f-A_g)+(R_f-R_g)=0,
\ee
where $\lambda$ is the common eigenvalue of $A_f$ and $A_g$ on $\Delta$.
\end{proposition}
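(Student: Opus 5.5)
Starting from Proposition~\ref{commoneig}, on each connected component of an open dense subset we have a common eigenbundle $\Delta:=\mathcal{N}(\beta_2)$ of rank at least $n-2$ for $A_f$, $A_g$ and $R_f-R_g$. Write $A_f|_\Delta=\lambda I$, $A_g|_\Delta=\mu I$, $(R_f-R_g)|_\Delta=rI$. Taking $T\in\Gamma(\Delta)$ in \eqref{reltens} gives, for every $X$,
$$
\lambda\<X,T\>A_f-\mu\<X,T\>A_g+\<X,T\>(R_f-R_g)+r\<X,T\>I=0,
$$
hence, choosing $X=T$,
\be\label{plan1}
\lambda A_f-\mu A_g+(R_f-R_g)+rI=0.
\ee
Evaluating \eqref{plan1} on $\Delta$ gives $\lambda^2-\mu^2+2r=0$. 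The remaining task is to prove $r=0$ and $\mu=\pm\lambda$. Given these, $\lambda^2=\mu^2$, and after reorienting $g$ we get $\mu=\lambda$; then $r=0$ means $R_f|_\Delta=R_g|_\Delta$, and \eqref{plan1} becomes \eqref{reltens3}. Dropping the rank from $\ge n-2$ to ``$n-1$ or $n-2$'' just excludes rank $n$: in that case $A_f$ and $A_g$ are umbilical, $\beta$ is null by the computation in Proposition~\ref{prop:null}, and Proposition~\ref{prop:null} makes $g$ weakly congruent to $f$, a contradiction.

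To obtain $r=0$, I use the Codazzi-type identities. Applying \eqref{derR} to $f$ and $g$ and subtracting gives, for $X,Y\in\Gamma(\Delta)$,
$$
(\nabla_X(R_f-R_g))Y=S_f(Y)\lambda X-S_g(Y)\mu X+\lambda\<X,Y\>\xi_f-\mu\<X,Y\>\xi_g .
$$
I will compare this with the derivative of the identity $(R_f-R_g)|_\Delta=rI$. Since $\operatorname{rank}\Delta\ge n-2\ge 4$, the standard argument via Codazzi \eqref{codazzi} for an eigenbundle of rank at least $2$ applies. It shows that $\Delta$ is umbilical, and that $X(\lambda)$ and $X(\mu)$ are controlled by the components of $\xi_f$ and $\xi_g$. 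Comparing components of \eqref{plan1} differentiated along $\Delta$, and using the algebraic constraints \eqref{eq:pi1}, should force $\lambda\xi_f^\Delta=\mu\xi_g^\Delta$, and then $X(r)=0$ together with a relation tying $r$ to the second fundamental form of $\Delta$.

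The algebraic constraint on $R$ is the real lever. $R_f$ and $R_g$ each have eigenvalues only in $\{0,r_f,1\}$, with $r_f$ simple. Both $\ker R$ and $\ker(I-R)$ have dimension $\ge k-1$ and $\ge n-k$ by \eqref{dimkerR}, and $\dim\Delta\ge n-2$. Hence inside $\Delta$ (which also meets $\ker S_f\cap\ker S_g$ in rank $\ge n-4\ge 2$) both $R_f|_\Delta$ and $R_g|_\Delta$ act with eigenvalues in $\{0,1\}$ on large subspaces. Their difference is the scalar $r$, so on a common vector we get $r\in\{-1,0,1\}$. Suppose $r=\pm1$. Then $R_g=R_f\mp I$ on a subspace of dimension $\ge n-4$, which would force $\ker R_f$ to be $\ker(I-R_g)$ there. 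Dimension counting with \eqref{dimkerR}, using $n\ge 6$, should give a contradiction, for instance that $\Delta\cap\ker R_f$ and $\Delta\cap\ker(I-R_f)$ are both nonzero, which is incompatible with a single scalar shift. This yields $r=0$, so $\lambda^2=\mu^2$.

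I expect this last step, ruling out $r=\pm1$ by counting dimensions, to be the main obstacle. It is where $n\ge 6$ (rather than $5$) should enter, and the extreme values $k=2$ and $k=n-1$ need care. If pure counting fails, my fallback is to use the differential relation from the previous step. A constant $r=\pm1$ combined with \eqref{derR} gives $S_f\lambda=S_g\mu$ on $\Delta$ and $\lambda\xi_f=\mu\xi_g$. Inserting this into the Gauss equation restricted to $\Delta$ should give $\lambda^2-\mu^2$ two incompatible values, namely $\mp2$ from \eqref{plan1} and $0$ from Gauss.
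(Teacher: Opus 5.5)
Your reduction is fine: $\lambda A_f-\mu A_g+(R_f-R_g)+rI=0$ and $\lambda^2-\mu^2+2r=0$ follow from \eqref{reltens}. Excluding rank $n$ by noting that $\beta$ is then null, so that Proposition~\ref{prop:null} applies, is also valid. It differs from the paper, which first proves $r=0$ and then uses Lemma~\ref{12imp3} and Proposition~\ref{cong}.

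\textbf{The gap.} It lies in the only substantive step, $r=0$. Your inference ``on a common vector we get $r\in\{-1,0,1\}$'' is unsupported.
\begin{itemize}
\item $\Delta$ is not known to be $R_f$-invariant, so vectors in $\Delta\cap\ker S_f\cap\ker S_g$ need not be eigenvectors of $R_f$.
\item What \eqref{dimkerR} actually gives is $\dim(\Delta\cap\ker R_f)+\dim(\Delta\cap\ker(I-R_f))\geq n-5$, which is only $1$ when $n=6$.
\item If $X\in\Delta$ has $R_fX=\epsilon X$ with $\epsilon\in\{0,1\}$, then $R_gX=(\epsilon-r)X$. Nothing you wrote prevents $\epsilon-r$ from being the simple eigenvalue $r_g\in(0,1)$ of $R_g$ (i.e.\ $X\parallel\xi_g$). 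That gives $r=\epsilon-r_g\notin\{-1,0,1\}$.
\end{itemize}
The exclusion of $r=\pm1$ is also left at ``should give a contradiction''. Your sample contradiction (both $\Delta\cap\ker R_f$ and $\Delta\cap\ker(I-R_f)$ nonzero) is not available for every $k$; for $k=2$, $\Delta\cap\ker R_f$ may be trivial. The Codazzi route and the Gauss fallback are speculative and unnecessary, since the claim is pointwise algebraic.

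\textbf{The missing idea.} Take eigenvectors from \emph{both} immersions inside the same large kernel, so that $r$ is constrained from both sides.
\begin{itemize}
\item If $k\leq(n+1)/2$: \eqref{dimkerR} applied to $f$ and to $g$ gives nonzero $X_1\in\Delta\cap\ker(I-R_f)$ and $X_2\in\Delta\cap\ker(I-R_g)$. Both intersections have dimension at least $n-k-2\geq(n-5)/2>0$, which is exactly where $n\geq6$ enters. Then $R_gX_1=(1-r)X_1$ and $R_fX_2=(1+r)X_2$. All eigenvalues of $R_f$ and $R_g$ lie in $[0,1]$, so $r\in[0,1]$ and $r\in[-1,0]$, hence $r=0$.
\item If $k\geq(n+1)/2$: use nonzero $X_1\in\Delta\cap\ker R_f$ and $X_2\in\Delta\cap\ker R_g$, of dimension at least $k-3\geq(n-5)/2$. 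Then $R_gX_1=-rX_1$ and $R_fX_2=rX_2$, and nonnegativity of the eigenvalues forces $r=0$.
\end{itemize}
This disposes of $r=\pm1$ and of the $r_g$, $r_f$ possibilities at once. Your reorientation step then completes the proof.
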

\proof
Let $\Delta$ be the eigenbundle given by Proposition \ref{commoneig} on a connected component $U$ of an open and dense subset of $M^n$, and let $\lambda_f$, $\lambda_g$ and $r$ the corresponding eigenvalues of $A_f|_{\Delta}$, $A_g|_{\Delta}$ and $(R_f-R_g)|_{\Delta}$ on $U$, respectively. Then \eqref{reltens} becomes 
\be\label{reltens2}
\lambda_fA_f-\lambda_gA_g+(R_f-R_g)+rI=0.
\ee

Assume first that $k\leq (n+1)/2$.  Since 
$\dim\ker (I-R_f)\geq n-k$ by \eqref{dimkerR}, the assumption on $n$ implies that $\dim\ker (I-R_f)\geq n-k\geq (n-1)/2$. Hence 
$$
\dim(\Delta\cap\ker (I-R_f))\geq n-2+\frac{n-1}{2}-n=\frac{n-5}{2}>0.
$$
Let $X_1\in \Gamma(\Delta\cap\ker(I-R_f))$. Then
$$
(R_f-R_g)X_1=R_fX_1-R_gX_1=X_1-R_gX_1,
$$
and thus
$$
R_gX_1=(1-r)X_1.
$$
Hence $X_1$ is also an eigenvector of $R_g$, and we have three possibilities for the value of $(1-r)$, namely,  $1-r\in\{0,1,r_g\}$, where $0<r_g<1$. Following the same arguments for $g$, if $X_2\in \Gamma(\Delta\cap\ker(I-R_g))$, in a similar way we see that
$$
R_fX_2=(1+r)X_2,
$$
and that $1+r\in\{0,1,r_f\}$, where $0<r_f<1$.
If $r=1$, then $1+r=2$, but this is not possible, since $0<r_f<1$. If $1-r=r_g$, then $r=1-r_g>0$, and hence $1+r=r_f$, which is not possible either. 
Thus, the only possibility is that $r=0$.

Similarly, if $k\geq (n+1)/2$, from $\dim\ker R_f\geq k-1$ and the assumption on $n$, we see that $\dim(\Delta\cap\ker R_f)\geq\frac{n-5}{2}>0$. Taking $X_1\in\Delta\cap\ker R_f$ and $X_2\in\Delta\cap\ker R_g$, and arguing as in the preceding case, we obtain $R_gX_1=-rX_1$ and $R_fX_2=rX_2$. Recall that the eigenvalues of $R_f$ and $R_g$ are all nonnegative, hence the only possibility is again that $r=0$.
Thus $(R_f-R_g)|_\Delta=0$ in any case, which, together with \eqref{reltens2}, gives $\lambda_f^2-\lambda_g^2=0$.
Replacing $N_g$ by $-N_g$,  if necessary, we can assume that 
$
\lambda_f=\lambda_g.
$ 
Thus Eq. \eqref{reltens2} becomes \eqref{reltens3}.

It remains to argue that  $\dim \Delta$ can not be equal to $n$ on any connected component $U$. 
Indeed, if otherwise, since $r=0$, then $R_f=R_g$, and from $\lambda_f=\lambda_g$ we obtain $A_f=A_g$. 
Moreover, it follows from Lemma~\ref{12imp3} that $S_f=S_g$. Thus $f|_U$ and $g|_U$  would be congruent by Proposition \ref{cong}, 
a contradiction.
\qed
\vspace{2ex}

We conclude this section with two facts that will be used in the case-by-case study carried out in the next section.

\begin{proposition}\label{umbeig}
    Let $f\colon M^n\to\Hy^k\times\Sf^{n-k+1}$ be an isometric immersion and let $N\in\Gamma(N_fM)$ be a unit normal vector field with associated shape operator $A$.
    If $\lambda\in C^\infty(M)$ is a principal curvature of $f$, then the associated eigenbundle $\Delta_\lambda=\ker(A-\lambda I)$ is an umbilical distribution.
\end{proposition}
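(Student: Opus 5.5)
The plan is to use the Codazzi equation \eqref{codazzi}, which in $\Hy^k\times\Sf^{n-k+1}$ differs from the space-form case only by the term $(X\wedge Y)\xi$. We work on an open subset where the multiplicity of $\lambda$ is constant, so that $\Delta_\lambda$ is a smooth distribution. If $\dim\Delta_\lambda=1$ there is nothing to prove, since every rank-one distribution is trivially umbilical. So we assume $\dim\Delta_\lambda\geq 2$. For $X,Y\in\Gamma(\Delta_\lambda)$ we have
$$
(\nabla_XA)Y=X(\lambda)Y+(\lambda I-A)\nabla_XY,
$$
and the analogous formula holds with $X$ and $Y$ swapped. Substituting both into \eqref{codazzi} gives
$$
X(\lambda)Y-Y(\lambda)X+(\lambda I-A)[X,Y]+\<Y,\xi\>X-\<X,\xi\>Y=0 .
$$

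We first project this identity onto $\Delta_\lambda^\perp$. The only surviving term is $(\lambda I-A)[X,Y]$, and $\lambda I-A$ is invertible on $\Delta_\lambda^\perp$. Hence $[X,Y]\in\Gamma(\Delta_\lambda)$, so $\Delta_\lambda$ is integrable. Next we project onto $\Delta_\lambda$, taking $X,Y$ orthonormal. This yields $X(\lambda)=-\<X,\xi\>$ for all $X\in\Gamma(\Delta_\lambda)$, that is, $(\grad\lambda+\xi)_{\Delta_\lambda}=0$. We record this identity, although it is not strictly needed below.

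For umbilicity, we now take $Z\in\Gamma(\Delta_\lambda^\perp)$ and $X,Y\in\Gamma(\Delta_\lambda)$, and compute $\<(\nabla_XA)Y,Z\>$. From the formula above this equals $\<(\lambda I-A)\nabla_XY,Z\>$. We then apply \eqref{codazzi} to the pair $X,Z$ and take the inner product with $Y$:
$$
\<(\nabla_ZA)X,Y\>-\<(\nabla_XA)Z,Y\>=\<X,Y\>\<Z,\xi\>-\<Z,Y\>\<X,\xi\>=\<X,Y\>\<Z,\xi\>.
$$
We compute each term. First, $\<(\nabla_ZA)X,Y\>=Z(\lambda)\<X,Y\>$, using $(A-\lambda I)Y=0$ and the symmetry of $\nabla_ZA$. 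Second, $\<(\nabla_XA)Z,Y\>=\<Z,(\nabla_XA)Y\>$, which is the same as $\<(\lambda I-A)\nabla_XY,Z\>$. Together these give
$$
\<(\lambda I-A)\nabla_XY,Z\>=\<X,Y\>\big(Z(\lambda)-\<Z,\xi\>\big).
$$

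The next step is to define $\eta\in\Gamma(\Delta_\lambda^\perp)$ by
$$
\eta=(\lambda I-A)|_{\Delta_\lambda^\perp}^{-1}\big(\grad\lambda-\xi\big)_{\Delta_\lambda^\perp}.
$$
Since $(\lambda I-A)$ is symmetric and invertible on $\Delta_\lambda^\perp$, the last identity is equivalent to $(\nabla_XY)_{\Delta_\lambda^\perp}=\<X,Y\>\eta$. This is exactly the statement that $\Delta_\lambda$ is umbilical with mean curvature vector field $\eta$.

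The only delicate point I expect is the bookkeeping of signs and of the symmetric use of $\nabla A$ in this last step. The smoothness of $\Delta_\lambda$ is handled by restricting to the open dense subset where the multiplicity of $\lambda$ is constant.
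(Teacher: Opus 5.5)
Your proof is correct, but it takes a different route from the paper's.

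\textbf{The paper's route.} The paper never works with the Codazzi equation of $\Hy^k\times\Sf^{n-k+1}$ here. It composes $f$ with the conformal diffeomorphism $\Psi$ onto $\R^{n+1}\setminus\R^{k-1}$ and notes that the shape operator of $\Psi\circ f$ has the form $\rho_1A+\rho_2I$. So $\Delta_\lambda$ is an eigenbundle of a hypersurface of Euclidean space, hence umbilical by the classical flat-space fact. It then uses that umbilicity of a distribution is invariant under conformal changes of the metric.

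\textbf{Your route.} You work directly with \eqref{codazzi}. The key point is that, for $X,Y\in\Gamma(\Delta_\lambda)$ and $Z\in\Gamma(\Delta_\lambda^\perp)$, the extra term $(X\wedge Z)\xi$ paired with $Y$ contributes only $\<X,Y\>\<Z,\xi\>$. This is proportional to the metric on $\Delta_\lambda$, so it cannot destroy umbilicity.

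\textbf{Comparison.} The paper's argument is shorter and conceptual, but it relies on three imported facts: the conformal transformation law for the shape operator, umbilicity of eigenbundles in space forms, and conformal invariance of umbilicity. Yours is self-contained and elementary. It also gives the mean curvature vector explicitly as $\eta=(\lambda I-A)|_{\Delta_\lambda^\perp}^{-1}(\grad\lambda-\xi)_{\Delta_\lambda^\perp}$, and it yields the relation between $\grad\lambda$ and $\xi$ along $\Delta_\lambda$, which the paper derives separately later. Your separate integrability step and the case split $\dim\Delta_\lambda=1$ versus $\geq 2$ are not needed. The last computation works for any multiplicity, and umbilicity already implies integrability, since $(\nabla_XY)_{\Delta_\lambda^\perp}$ is symmetric in $X,Y$.

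\textbf{A sign slip.} Projecting your first identity onto $\Delta_\lambda$ with $X,Y$ orthonormal gives $X(\lambda)-\<X,\xi\>=0$, that is, $X(\lambda)=\<X,\xi\>$, not $-\<X,\xi\>$. This uses the convention $(X\wedge Y)Z=\<Y,Z\>X-\<X,Z\>Y$, which you apply correctly in the final step. The corrected identity agrees with the paper's $S_f(T)=T(\lambda)$ for $T\in\Gamma(\Delta)$. You never use it, so the proof is unaffected.
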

\proof
We use the conformal diffeomorphism $\Psi\colon\Hy^{k}\times\Sf^{n-k+1}\to\R^{n+1}\setminus\R^{k-1}$. 
The vector field $\Psi_*N$ is normal to the conformal immersion $\hat{f}=\Psi\circ f$. 
The relation between the Levi-Civita connections associated with the (conformal) metrics of $\Hy^{k}\times\Sf^{n-k+1}$ and $\R^{n+1}\setminus\R^{k-1}$ implies that
the shape operator $A_{\Psi_*N}$ of $\hat f$ satisfies $A_{\Psi_*N}=\rho_1 A+\rho_2 I$ for some smooth functions $\rho_1$ and $\rho_2$. 
Therefore $\Delta_\lambda$ is the eigenbundle associated with a principal curvature of $\hat{f}$. 
Thus $\Delta_\lambda$ is an umbilical distribution with respect to the metric induced by $\hat{f}$. Since umbilicity of distributions is invariant under conformal changes of the metric, then $\Delta_\lambda$ is also  umbilical with respect to the metric induced by~$f$.
\qed

\begin{lemma}\label{lemdeltaumb}
Let $f\colon M^n\to \Hy^{k}\times\Sf^{n-k+1}$, $n\geq 6$, $2\leq k\leq n-1$, be an isometric immersion whose index of relative nullity is less than or equal to $n-2$ at every point. Assume that $f$ is truly deformable and let $g\colon M^n\to \Hy^{k}\times\Sf^{n-k+1}$ be a true deformation of $f$. Let $\Delta$ be the common eigenbundle of $A_f$, $A_g$ and $R_f-R_g$ given by Proposition \ref{commoneig2}. If $\lambda\neq 0$, where $A_f|_{\Delta}=A_g|_{\Delta}=\lambda I$, then $\Delta'=\ker(A-\lambda I)\cap\ker(A-\lambda I)$ is an umbilical distribution such that $\Delta\subset\Delta'$ and $\Delta'\subset\ker(R_f-R_g)$.

\end{lemma}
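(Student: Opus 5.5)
Here $\Delta'=\ker(A_f-\lambda I)\cap\ker(A_g-\lambda I)$; the statement has a typo, writing $A$ for both shape operators. The plan rests on the relation \eqref{reltens3} of Proposition~\ref{commoneig2}, namely $\lambda(A_f-A_g)+(R_f-R_g)=0$, together with Proposition~\ref{umbeig}. The inclusion $\Delta\subset\Delta'$ is immediate, since $A_f|_\Delta=A_g|_\Delta=\lambda I$.

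For $\Delta'\subset\ker(R_f-R_g)$, take $X\in\Delta'$. Then $A_fX=\lambda X=A_gX$, so $(A_f-A_g)X=0$. Applying \eqref{reltens3} to $X$ gives $(R_f-R_g)X=-\lambda(A_f-A_g)X=0$. This step does not need $\lambda\neq0$.

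Umbilicity is the real content. By Proposition~\ref{umbeig}, $\ker(A_f-\lambda I)$ is an umbilical distribution, and so is $\ker(A_g-\lambda I)$. An intersection of umbilical distributions need not be umbilical in general, so I would instead show that $\Delta'$ equals $\ker(A_f-\lambda I)$ when $\lambda\ne0$. Let $X\in\ker(A_f-\lambda I)$ and apply \eqref{reltens3}: $\lambda(\lambda X-A_gX)=-(R_f-R_g)X$.

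To extract information, I would go back to the full identity \eqref{reltens2} with $r=0$, which reads $\lambda A_f-\lambda A_g+(R_f-R_g)=0$. This is the same equation again, so it gives nothing new. Instead I would use that $A_f$, $A_g$ and $R_f-R_g$ commute, which was established in the proof of Proposition~\ref{commoneig} when $\beta_2$ is non-null, or can be derived from \eqref{reltens} with $T\in\Delta$. With commutativity, $\ker(A_f-\lambda I)$ is invariant under $A_g$, and we may pick common eigenvectors.

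The step I expect to be the main obstacle is showing that $A_g=\lambda I$ on $\ker(A_f-\lambda I)$. For this I would use \eqref{reltens} with $T\in\Delta$ and $X=T$, which yields
$$\lambda^2 A_f-\lambda^2A_g+(R_f-R_g)=0.$$
Comparing with \eqref{reltens3} gives $(\lambda^2-\lambda)(A_f-A_g)=0$. This is suspicious, so I would instead verify the computation via $\<\!\<\beta(T,T),\beta(Y,Z)\>\!\>=0$ and normalization. If that route fails, I would avoid it entirely and argue directly on $\Delta'$. On an open dense set where $\dim\Delta'$ is constant, $\Delta'$ is a common eigenbundle of $A_f$ and $A_g$ with eigenvalue $\lambda$. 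Repeating the Codazzi argument of Proposition~\ref{umbeig} in the conformal model $\hat f=\Psi\circ f$ then shows that for $X,Y\in\Delta'$ the component $(\nabla_XY)_{\ker(A_f-\lambda I)^\perp}$ is $\<X,Y\>$ times a fixed vector, and similarly for $g$. Since $\Delta'\subset\Delta_f\cap\Delta_g$, the $\Delta'^\perp$-component of $\nabla_XY$ splits into pieces lying in $\Delta_f^\perp$ and in $\Delta_g^\perp$, each proportional to $\<X,Y\>$. Summing these gives a mean curvature vector, which proves that $\Delta'$ is umbilical.
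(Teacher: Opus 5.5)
Your handling of $\Delta\subset\Delta'$ and $\Delta'\subset\ker(R_f-R_g)$ matches the paper, and you are right that $\lambda\neq0$ plays no role in the second inclusion. For umbilicity, the paper's argument is one line. Set $\Delta_f=\ker(A_f-\lambda I)$ and $\Delta_g=\ker(A_g-\lambda I)$; both are umbilical by Proposition~\ref{umbeig} applied to $f$ and to $g$, and so their intersection $\Delta'$ is umbilical.

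Your remark that an intersection of umbilical distributions need not be umbilical is false. Your last paragraph is in fact a sketch of why it always is, but as written it is not correct. The decomposition $\Delta'^\perp=\Delta_f^\perp+\Delta_g^\perp$ is neither direct nor orthogonal, so the $\Delta'^\perp$-component of $\nabla_XY$ is not the sum of its $\Delta_f^\perp$- and $\Delta_g^\perp$-components.

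Argue dually instead. Take $X,Y\in\Gamma(\Delta')$ and split $w\in\Delta'^\perp$ as $w=w_f+w_g$ with $w_f\in\Delta_f^\perp$ and $w_g\in\Delta_g^\perp$. Umbilicity of $\Delta_f$ and $\Delta_g$, with mean curvature vectors $\delta_f,\delta_g$, gives $\langle\nabla_XY,w\rangle=\langle X,Y\rangle(\langle\delta_f,w_f\rangle+\langle\delta_g,w_g\rangle)$. The bracket does not depend on the splitting. Indeed, for $u\in\Delta_f^\perp\cap\Delta_g^\perp$ and a unit $E\in\Gamma(\Delta')$ (nonzero since $\Delta\subset\Delta'$), one has $\langle\delta_f,u\rangle=\langle\nabla_EE,u\rangle=\langle\delta_g,u\rangle$. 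The bracket is therefore a linear functional on $\Delta'^\perp$, represented by some $\delta\in\Delta'^\perp$, and $(\nabla_XY)_{\Delta'^\perp}=\langle X,Y\rangle\delta$. There is also no need to repeat the argument of Proposition~\ref{umbeig}; just apply it.

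You should drop the detour through $\Delta'=\Delta_f$ entirely. It is unnecessary, and the steps you propose for it do not work:
\begin{itemize}
\item Taking $X=T\in\Delta$ of unit length in \eqref{reltens} gives $\lambda A_f-\lambda A_g+(R_f-R_g)=0$. That is \eqref{reltens3} itself, not a version with $\lambda^2$, so no relation $(\lambda^2-\lambda)(A_f-A_g)=0$ arises.
\item Commutativity of $A_f$ and $A_g$ is not available. In the proof of Proposition~\ref{commoneig} it is derived only under the assumption that some $T\in\mathcal{N}(\beta_2)$ is not a principal direction of $g$, and that assumption is then shown to be impossible.
\item \eqref{reltens3} only tells you that $R_f-R_g$ commutes with $A_f-A_g$.
\end{itemize}
There is no reason to expect $A_g=\lambda I$ on $\Delta_f$, and the proof does not need it.
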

\proof
From Proposition \ref{umbeig}, as the intersection of umbilical distributions, $\Delta'$ is also umbilical. 
Clearly, $\Delta\subset\ker(A_f-\lambda I)\cap\ker(A_g-\lambda I)$. 
Finally, given that $\lambda\neq 0$, we see from \eqref{reltens3} that $\Delta'\subset\ker(R_f-R_g)$.  \vspace{1ex}
\qed

\subsection{A case-by-case study}

We now proceed to describe the truly deformable hypersurfaces $f\colon M^n\to \Hy^{k}\times\Sf^{n-k+1}$, $n\geq 6$, $2\leq k\leq n-1$, that do not split in any open subset. Let $g\colon M^n\to \Hy^{k}\times\Sf^{n-k+1}$ be a true deformation of $f$, and let $\Delta$ and $\lambda$ be the common eigenbundle and the common eigenvalue of $A_f|_\Delta$ and $A_g|_\Delta$, respectively, given by Proposition~\ref{commoneig2}. 
We study the possible cases separately according to the rank of $\Delta$ and depending on whether $\lambda$ is identically zero or vanishes nowhere. From now on, we restrict the immersion to a connected component of an open and dense subset where the assumptions hold, without explicitly mentioning it. 

\vspace{2ex}

\subsubsection[Case lambda=0 and dimDelta=n-2]{Case $\lambda=0$ and $\dim \Delta=n-2$}

The next result deals with the case in which 
$\lambda=0$ and $\dim \Delta=n-2$.


\begin{proposition}\label{ruled}
Let $f\colon M^n\to \Hy^k\times \Sf^{n-k+1}$, $n\geq 6$, $2\leq k\leq n-1$, be a truly deformable hypersurface that does not split in any open subset, and let $g\colon M^n\to \Hy^{k}\times\Sf^{n-k+1}$ be a true deformation of $f$ such that  the common eigenbundle $\Delta$ and and the common eigenvalue $\lambda$ of $A_f|_\Delta$ and $A_g|_\Delta$, respectively, given by Proposition~\ref{commoneig2}, satisfy $\lambda=0$ and $\dim \Delta=n-2$. Then $\Delta$ coincides with the relative nullity distribution of $f$ and $f$ is ruled.
\end{proposition}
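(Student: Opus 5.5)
Since $\lambda=0$, the bundle $\Delta$ is contained in $\ker A_f\cap\ker A_g$. Because the index of relative nullity of $f$ is at most $n-2$ and $\dim\Delta=n-2$, we get $\Delta=\ker A_f$. So $\Delta$ is the relative nullity distribution of $f$, and likewise of $g$. Equation \eqref{reltens3} with $\lambda=0$ gives $R_f=R_g=:R$ on all of $TM$.

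\emph{Step 1: $\Delta$ is totally geodesic.} I will apply the Codazzi equation \eqref{codazzi} to $X,Y\in\Gamma(\Delta)$. The left-hand side is $-A\nabla_XY+A\nabla_YX$ and lies in the image of $A$, which is contained in $\Delta^\perp$. The right-hand side is $(X\wedge Y)\xi_f$, which lies in $\Delta$. Both must therefore vanish. Since $\dim\Delta\ge 4$, this forces $\Delta\subset\ker S_f$, and it also shows that $\Delta$ is integrable.

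Next I apply \eqref{codazzi} to $T\in\Gamma(\Delta)$ and $X\in\Gamma(\Delta^\perp)$. The $\Delta$-component gives $\langle X,\xi_f\rangle T$ up to a term from $A\nabla_TX$, which lies in $\Delta^\perp$. The $\Delta^\perp$-component is the usual equation $(\nabla_TA)X=A\nabla_XT$, restricted appropriately. From the $\Delta$-component I expect $S_f(X)=0$ for $X\in\Delta^\perp$ as well, unless $\nabla_TT$ has a $\Delta^\perp$-component. That alternative should be ruled out by combining it with \eqref{derS} along $\Delta$, exactly as in the proof of Proposition \ref{split2}. Since $f$ does not split, $S_f\not\equiv0$. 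So I will instead organize the argument to prove directly that $\langle\nabla_TS,X\rangle=0$ for $T,S\in\Delta$, $X\in\Delta^\perp$. This should follow from the symmetric part of Codazzi together with $\Delta\subset\ker S_f$. The conclusion is that $\Delta$ is totally geodesic. Since $\alpha_f$ vanishes on $\Delta$, $f$ maps each leaf of $\Delta$ onto a totally geodesic submanifold of $\Hy^k\times\Sf^{n-k+1}$.

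\emph{Step 2: enlarge to a ruling of rank $n-1$.} The definition of ruled requires a totally geodesic distribution of rank $n-1$. I will therefore look for a line field $L\subset\Delta^\perp$ with $\langle AZ,Z\rangle=0$ for $Z\in L$, and such that $\Delta\oplus L$ is totally geodesic. The tools are the following.
\begin{itemize}
\item The Gauss equations for $f$ and $g$ with $R_f=R_g$ give $A_fX\wedge A_fY=A_gX\wedge A_gY$. Hence $\det A_f|_{\Delta^\perp}=\det A_g|_{\Delta^\perp}$.
\item Equation \eqref{derR}, written for both immersions with the same $R$, gives $S_f(Y)A_fX+\langle A_fX,Y\rangle\xi_f=S_g(Y)A_gX+\langle A_gX,Y\rangle\xi_g$.
\end{itemize}
Evaluating the second identity on $\Delta^\perp$, and using that $A_g\neq\pm A_f$ on every open set (otherwise Lemma \ref{12imp3} and Proposition \ref{cong} would make $g$ congruent to $f$), I expect to force $\det A_f|_{\Delta^\perp}=0$ or $\det A_f|_{\Delta^\perp}<0$. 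In either case real asymptotic directions $L$ exist inside $\Delta^\perp$. Finally I will check, via Codazzi in the $L$ direction, that $\Delta\oplus L$ is integrable and totally geodesic, as in the classical treatment of ruled Sbrana--Cartan hypersurfaces.

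I expect Step 2 to be the main obstacle. The delicate points are excluding the elliptic case $\det A_f|_{\Delta^\perp}>0$ and proving total geodesicity of $\Delta\oplus L$. For the first, I would begin with the $\xi$-identity from \eqref{derR}, since that is where the product structure gives more information than in the Euclidean case.
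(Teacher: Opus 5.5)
Your Step 1 fails for a structural reason: under these hypotheses $\Delta$ is \emph{not} totally geodesic. For $T,T'\in\Gamma(\Delta)$ and $Z\in\Gamma(\Delta^\perp)$, the $T'$-component of \eqref{codazzi} applied to $Z,T$ reads
\[
\langle \nabla_TT',A_fZ\rangle=S_f(Z)\,\langle T,T'\rangle .
\]
Since $\xi_f\in\Delta^\perp$ and $A_f|_{\Delta^\perp}$ is invertible, $\Delta$ is umbilical with mean curvature vector $v=(A_f|_{\Delta^\perp})^{-1}\xi_f$. This vector is nonzero wherever $S_f\neq0$, which is an open dense set because $f$ does not split. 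The mechanism of Proposition~\ref{split2} does not carry over, because $\Delta^\perp$ now has rank two. Along $\Delta$, \eqref{derS} only gives $S_f(\nabla_TT)=0$, i.e.\ $\langle v,\xi_f\rangle=0$. That removes the $\xi_f$-component of $v$, but not its component along $\ker S_f\cap\Delta^\perp$. So the identity $\langle\nabla_TT',X\rangle=0$ for all $X\in\Delta^\perp$ that you intend to prove is false, and the leaves of $\Delta$ are not mapped onto totally geodesic submanifolds. In the paper's frame one has $\langle\nabla_TT,Y\rangle=S_f(X)/\mu\neq0$ for unit $T$.

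Step 2, which you leave as an expectation, misses the matching key point. The ruling is not just some asymptotic line of $A_f|_{\Delta^\perp}$; the determinant is automatically negative here, so there are two such lines. The ruling has to contain $v$, so it must be $\ker S_f=\Delta\oplus\mbox{span}\{Y\}$ with $Y$ spanning $\ker S_f\cap\Delta^\perp$. The paper reaches this through the deformation, in four steps. First, $R_f=R_g$ and \eqref{eq:pi1} give $S_g=\pm S_f$, normalized to $S_g=S_f$. Second, the skew-symmetrized identity \eqref{codR} for $f$ and $g$ gives $A_f=A_g$ on $\ker S_f$. Third, the Gauss equations with $R_f=R_g$, evaluated at $Z=Y$, give $\langle A_fY,Y\rangle(A_f-A_g)X=0$, hence $\langle A_fY,Y\rangle=0$ since $A_f\neq A_g$. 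Fourth, subtracting Codazzi components for $f$ and $g$ yields $\langle\nabla_TX,Y\rangle=\langle\nabla_TX,T\rangle=\langle\nabla_YX,Y\rangle=\langle\nabla_YX,T\rangle=0$, so $\ker S_f$ is totally geodesic.

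Your Step 1 computation, done correctly, actually gives this without using $g$. The relations $A_fv=\xi_f$ and $\langle v,\xi_f\rangle=0$ mean $\ker S_f=\Delta\oplus\mbox{span}\{v\}$ and $A_f(\ker S_f)=\mbox{span}\{\xi_f\}$. Hence $\langle A_fU,W\rangle=0$ for $U,W\in\ker S_f$. Moreover \eqref{derS} gives $S_f(\nabla_UW)=-\langle(t_fI-R_f)A_fU,W\rangle=0$, because $R_f\xi_f=(1-t_f)\xi_f$. Thus $\ker S_f$ is totally geodesic and $f$ is totally geodesic on its leaves.

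Finally, the identity you extracted from \eqref{derR} is stronger than either you or the paper exploit. Write $\xi_g=\epsilon\xi_f$ with $\epsilon=\pm1$, and $D=A_f-\epsilon A_g$. The identity then reads $S_f(Y)DX+\langle DX,Y\rangle\xi_f=0$. Taking $Y=\xi_f$ and pairing with $\xi_f$ gives $D=0$, so $f$ and $g$ would be congruent by Proposition~\ref{cong}. Hence the hypotheses cannot actually hold for a non-split $f$, and the statement is true vacuously.
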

\proof
By \eqref{reltens3}, we have $R_f=R_g$, hence either $S_f=0=S_g$ or both one-forms are nonzero.
In the former case,  both $f$ and $g$ split locally by Proposition \ref{splits},  
which is ruled out by our assumptions.
Thus,  $S_f$ and $S_g$ are both nonzero and \eqref{eq:pi1} implies that
$
S_f=\pm S_g.
$
After changing $N_g$ by $-N_g$, if necessary, we may assume that $S_f=S_g$.
Notice that \eqref{codazzi} and the fact that $n-2\geq 2$ imply that $\Delta\subset \ker S_f=\ker S_g$. 
Moreover, since $f$ and $g$ do not split in any open subset, it follows from Proposition \ref{split2} that $\Delta=\ker A_f=\ker A_g$, 
hence $\Delta$ is an umbilical distribution.
Applying \eqref{codR} for $f$ and $g$ gives
$$
\<\xi_f, Y\>A_fX-\<\xi_f,X\>A_fY=\<\xi_g, Y\>A_gX-\<\xi_g, X\>A_gY
$$
for all $X,Y\in\mathfrak{X}(M)$. Taking $X\in\Gamma((\ker S_f)^\perp)$ and $Y\in\Gamma(\ker S_f)$, we obtain
$$
\<\xi_f,X\>A_fY=\<\xi_g, X\>A_gY,
$$
which implies that $A_f$ and $A_g$ coincide on $\ker S_f=\ker S_g$.
By Proposition \ref{cong}, we must have  $A_fX\neq A_gX$,  otherwise, $f$ and $g$ would be congruent. 

Since $R_f=R_g$, the Gauss equation \eqref{Gauss} for $f$ and $g$ gives
$$
(A_fX\wedge A_fY)Z=(A_gX\wedge A_gY)Z
$$
for all $X,Y,Z\in\mathfrak{X}(M)$.
Take a (local) orthonormal frame $\{X, Y\}$ of $\Delta^\perp$ with $X\in\Gamma((\ker S_f)^\perp)$ and $Y\in\Gamma(\ker S_f\cap\Delta^\perp)$. 
Since $A_fY=A_gY$, then
$$
\<A_fY,Z\>(A_fX-A_gX)=\<(A_f-A_g)X,Z\>A_fY
$$
for any $Z\in\mathfrak{X}(M)$. Taking $Z=Y$ yields
$$
\<A_fY,Y\>(A_f-A_g)X=0,
$$
and the fact that $f$ and $g$ are not congruent implies that $\<A_fY,Y\>=0$.
Therefore, the shape operators of $f$ and $g$ on $\Delta^\perp$, with respect to $X$ and $Y$, have the form
$$
A_f=\begin{bmatrix}
\lambda_f & \mu\\
\mu & 0
\end{bmatrix}
\;\;\mbox{and}\;\;
A_g=\begin{bmatrix}
\lambda_g & \mu\\
\mu & 0
\end{bmatrix}
$$
Given $T\in\Gamma(\Delta)$ of unit length,  the $Y$ component of \eqref{codazzi}, evaluated in 
$X$ and $T$, gives
\begin{align*}
0&=\<(\nabla_XA_f)T-(\nabla_TA_f)X,Y\>\\
&=-\mu\<\nabla_XT,X\>-\<\nabla_T(\lambda_fX+\mu Y),Y\>+\mu\<\nabla_TX,X\>\\
&=\mu\<T,\nabla_XX\>-\lambda_f\<\nabla_TX,Y\>-T(\mu).
\end{align*}
Similarly, 
$$
\mu\<T,\nabla_XX\>-\lambda_g\<\nabla_TX,Y\>-T(\mu)=0,
$$
which implies that $\<\nabla_TX,Y\>=0$.
As for the $T$ component of the same equation, we have
\begin{align*}
    S_f(X)&=\<(\nabla_XA_f)T-(\nabla_TA_f)X,T\>\\
    &=-\<\nabla_T(\lambda_fX+\mu Y),T\>\\
    &=-\lambda_f\<\nabla_TX,T\>-\mu\<\nabla_TY,T\>,
\end{align*}
and, similarly,
$$
S_g(X)=-\lambda_g\<\nabla_TX,T\>-\mu\<\nabla_TY,T\>.
$$
Hence $\<\nabla_TX,T\>=0$.
Let us now take the $Y$ component of \eqref{codazzi} evaluated in $X$ and $Y$, that is,
\begin{align*}
    S_f(X)&=\<(\nabla_XA_f)Y-(\nabla_YA_f)X,Y\>\\
    &=2\mu\<\nabla_XX,Y\>-\lambda_f\<\nabla_YX,Y\>-Y(\mu).
\end{align*}
Similarly,
$$
S_g(X)=2\mu\<\nabla_XX,Y\>-\lambda_g\<\nabla_YX,Y\>-Y(\mu).
$$
Thus, we necessarily have $\<\nabla_YX,Y\>=0$.
In an analogous way, the $T$ component of \eqref{codazzi} evaluated in $X$ and $Y$, for $f$ and $g$, gives
$\<\nabla_YX,T\>=0$.
Summarizing, we have shown that 
\[\<\nabla_TX,Y\>=\<\nabla_TX,T\>=\<\nabla_YX,Y\>=\<\nabla_YX,T\>=0,\]
which, together with the fact that $\Delta$ is an umbilical distribution,  implies that $\ker S_f=\ker S_g=\Delta\oplus \mbox{span}\,\{Y\}$ is a totally geodesic distribution. Moreover, since $A_f|_\Delta=0=\<A_fY, Y\>$, the restriction of $f$ to each of its leaves is totally geodesic. Thus $f$ is ruled.
\qed
\subsubsection[Case lambda not 0 and dimDelta=n-2]{Case $\lambda\neq 0$ and $\dim \Delta=n-2$}
Let $f\colon M^n\to \Hy^k\times \Sf^{n-k+1}$, $n\geq 6$, $2\leq k\leq n-1$, be a truly deformable hypersurface, and
let  $g\colon M^n\to \Hy^k\times \Sf^{n-k+1}$ be a true deformation of $f$ such that the common eigenbundle $\Delta$ of $f$ and $g$,
and their common principal curvature $\lambda$ on $\Delta$, given by Proposition \ref{commoneig2}, satisfy $\lambda\neq 0$ and $\dim \Delta=n-2$. By Lemma \ref{lemdeltaumb}, we may take $\Delta$ as $\Delta=\ker(A_f-\lambda I)\cap\ker(A_g-\lambda I)$, which is an umbilical distribution.
In this case, it follows from \eqref{codazzi} for $f$ and $g$ that $S_f(T)=T(\lambda)=S_g(T)$ for any $T\in\Gamma(\Delta)$. Let $\{X,Y\}$  be an orthonormal frame of $\Delta^\perp$ given by eigenvectors of $A_f-A_g$, and such that 
$$(A_f-A_g)X=\lambda_1 X\,\,\,\,\mbox{and}\,\,\,\,(A_f-A_g)Y=\lambda_2 Y.$$  
It follows from \ref{reltens3}  that $X$ and $Y$ are also eigenvectors of $(R_f-R_g)$ with corresponding eigenvalues 
\be\label{rlambda}
r_i=-\lambda\lambda_i, \quad 1\leq i\leq 2.
\ee
The difference of \eqref{derR} for $f$ and $g$, evaluated in $X$ and $T\in\Gamma(\Delta)$, gives
$$
(\nabla_X(R_f-R_g))T=-(R_f-R_g)\nabla_XT=S_f(T)A_fX-S_g(T)A_gX=\lambda_1S_f(T)X.
$$
Then, the $X$ and $Y$ components give, respectively,
\be\label{XXT}
r_1\<\nabla_XX,T\>=\lambda_1S_f(T)
\ee
and 
\be\label{XTY}
r_2\<\nabla_XT,Y\>=0.
\ee
Exchanging the places of $X$ and $T$ in \eqref{derR}, we obtain
$$
T(r_1)X+r_1\nabla_TX-(R_f-R_g)\nabla_TX=\lambda(S_f-S_g)(X)T.
$$
The $X$, $Y$ and $T$ components of the equation above give, respectively,
\be\label{Tr1}
T(r_1)=0,
\ee
\be\label{TXY}
(r_1-r_2)\<\nabla_TX,Y\>=0
\ee
and 
\be\label{deltaX}
r_1\<\nabla_TX,T\>=\lambda(S_f-S_g)(X).
\ee
Similarly, using $Y$ and $T$ we have
\begin{align}
r_2\<\nabla_YY,T\>&=\lambda_2S_f(T),\label{YYT} \\
r_1\<\nabla_YT,X\>&=0,\label{YTX}\\
T(r_2)&=0,\label{Tr2}\\
(r_2-r_1)\<\nabla_TY,X\>&=0 \nonumber
\end{align}
and
\be\label{deltaY}
r_2\<\nabla_TY,T\>=\lambda(S_f-S_g)(Y).
\ee
Since $\lambda\neq 0$, then \eqref{rlambda}, \eqref{XXT} and \eqref{Tr1} yield
\be\label{Tlambda1}
T(\lambda_1)+\lambda_1\<\nabla_XT,X\>=0,
\ee
and, in a similar manner, we obtain 
\be\label{Tlambda2}
T(\lambda_2)+\lambda_2\<\nabla_YT,Y\>=0.
\ee\vspace{1ex}
The next proposition investigates the case in which $S_f=S_g$.

\begin{proposition}\label{sameS}
Let $f\colon M^n\to \Hy^k\times \Sf^{n-k+1}$, $n\geq 6$, $2\leq k\leq n-1$, be a truly deformable hypersurface that does not split, and let $g\colon M^n\to \Hy^k\times \Sf^{n-k+1}$ be a true deformation of $f$.  Assume that the common eigenbundle $\Delta$ of $f$ and $g$ and the common principal curvature $\lambda$ of $A_f|_{\Delta}$ and $A_g|_{\Delta}$, given by Proposition \ref{commoneig2},  satisfy  $\lambda\neq 0$ and $\dim \Delta=n-2$. If, in addition,  $S_f=S_g$, then the subset where $\Delta\subset\ker S_f$ has empty interior and, in the open and dense subset  $\mathcal{V}$ where $\Delta\not \subset\ker S$, $f|_\mathcal{V}$ and $g|_\mathcal{V}$ are locally either doubly rotational hypersurfaces, or, if $k=2$ (respectively, $k=n-1$),  rotational hypersurfaces of type $(I)$ (respectively, type $(II)$),  whose profiles are, in any case, isometric three-dimensional hypersurfaces $f_0, g_0\colon M_0^3\to \Hy^2\times\Sf^2$. 
\end{proposition}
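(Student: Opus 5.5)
We want to show that, under $S_f=S_g$, $\lambda\neq 0$ and $\dim\Delta=n-2$, the structure forces a warped product decomposition to which Proposition~\ref{propmulti} applies. The plan has three parts.

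\textbf{Step 1: $\Delta\not\subset\ker S_f$ generically.} Since $S_f=S_g$, equations \eqref{deltaX} and \eqref{deltaY} give $r_1\<\nabla_TX,T\>=0=r_2\<\nabla_TY,T\>$. The relations \eqref{XTY}, \eqref{TXY}, \eqref{YTX} give $\<\nabla_XT,Y\>=\<\nabla_TX,Y\>=\<\nabla_YT,X\>=0$ wherever $r_1,r_2,r_1-r_2$ are nonzero. Here $r_i=-\lambda\lambda_i$, and $\lambda_1,\lambda_2$ cannot both vanish, since otherwise $A_f=A_g$ and $R_f=R_g$, and $f,g$ would be congruent by Proposition~\ref{cong}. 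We must handle the subcases $\lambda_1\lambda_2=0$ and $\lambda_1=\lambda_2$ separately; the latter gives $A_g=A_f-\lambda_1 I$ on $\Delta^\perp$, which should be incompatible with $\Delta$ being a common eigenbundle unless things degenerate.

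Now suppose, on an open set, that $\Delta\subset\ker S_f$. Then $T(\lambda)=0$, and \eqref{XXT} and \eqref{YYT} give $\<\nabla_XX,T\>=0=\<\nabla_YY,T\>$. Together with the vanishing above, $\Delta^\perp$ is totally geodesic and $\Delta$ is umbilical with mean curvature having no $X,Y$ components. We then expect $\Delta$ to be totally geodesic. Codazzi for $T,S\in\Delta$ would give $\lambda$ constant along $\Delta^\perp$ as well, and we would feed this into the Gauss equation \eqref{Gauss} restricted to $\Delta$, using $n-2\geq 4$ and the fact that $\Delta$ meets both $\ker R_f$ and $\ker(I-R_f)$ (as in the proof of Proposition~\ref{commoneig2}). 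The expected contradiction is that the sectional curvatures of planes in $\Delta$ with mixed $R$-eigenvalues would be forced to be incompatible, or else $S_f=0$, which would mean $f$ splits. I expect this to be a delicate but routine computation.

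\textbf{Step 2: the warped product structure on $\mathcal{V}$.} On $\mathcal{V}$, $\Delta$ is umbilical and $\Delta\cap\ker S_f$ has codimension one in $\Delta$. Since $R_f|_\Delta=R_g|_\Delta$ and $\Delta$ contains the eigenvector $\xi_f$ direction only partially, we split
$\Delta=\Delta_1\oplus\Delta_2\oplus\mathrm{span}\{T_0\}$, where $\Delta_1=\Delta\cap\ker R_f$, $\Delta_2=\Delta\cap\ker(I-R_f)$, and $T_0$ is the projection of $\xi_f$ onto $\Delta$. Using \eqref{derR}, \eqref{derS2}, Codazzi and the connection relations from Step 1, we aim to show the following:
\begin{itemize}
\item[(a)] $\Delta_1$ and $\Delta_2$ are spherical;
\item[(b)] $\Delta_0:=\Delta^\perp\oplus\mathrm{span}\{T_0\}$ is totally geodesic, and so are $\Delta_0\oplus\Delta_i$.
\end{itemize}
By Hiepko's theorem this yields a local warped product $M_0^3\times_{\rho_1}M_1\times_{\rho_2}M_2$, with $\dim M_0=3$ because $\dim\Delta^\perp=2$ plus $T_0$. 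Here $\Delta_1$ and $\Delta_2$ are eigenspaces of $A_f$ (with eigenvalue $\lambda$), and $\Delta_1\subset\ker R_f$, $\Delta_2\subset\ker(I-R_f)$.

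Proposition~\ref{propmulti} then gives $f$, and likewise $g$ (which has the same $R$ on $\Delta$), as doubly rotational hypersurfaces with profiles $f_0,g_0\colon M_0^3\to\Hy^{k-r}\times\Sf^{n-k-s+1}$. Since $\dim\ker R_f\ge k-1$ and $\dim\Delta\cap\ker R_f\ge k-2$ (and similarly for $I-R_f$), we get $k-r=2$ and $n-k+1-s=2$, so the target is $\Hy^2\times\Sf^2$. When $r=0$ or $s=0$, which happens exactly when $k=2$ or $k=n-1$, the construction degenerates to a rotational hypersurface of type $(I)$ or $(II)$.

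\textbf{Main obstacle.} The sphericity of $\Delta_1$ and $\Delta_2$, and especially the total geodesicity of $\Delta_0$ including the direction $T_0$, are the hardest parts. My first attempt would be to compute $\nabla_{T}T_0$ using \eqref{derS2}, namely $\nabla_X\xi=(tI-R)AX$, together with $A=\lambda I$ on $\Delta$. This should show that the mean curvature vector of $\Delta_i$ lies in $\Delta_0$ and is parallel, with the needed derivative of $\lambda$ coming from $T(\lambda)=S_f(T)$.
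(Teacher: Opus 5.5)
Your proposal has a genuine gap: it never uses $S_f=S_g$ in the way that actually drives the result. The missing step is this. Since $\xi_f=\xi_g$, the difference of \eqref{derS} for $f$ and $g$, evaluated at $T\in\Gamma(\Delta)$, reads $\lambda(t_f-t_g)T-\lambda(R_f-R_g)T=0$. As $(R_f-R_g)|_\Delta=0$ and $\lambda\neq0$, this gives $t_f=t_g$, hence $r_f=r_g$ and $\xi_f=\xi_g\in\ker(R_f-R_g)$. Without this fact your Step~1 has no mechanism for a contradiction.

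Your Gauss-equation plan assumes $\Delta$ meets both $\ker R_f$ and $\ker(I-R_f)$. The proof of Proposition~\ref{commoneig2} does not give that; it gives only one of them, depending on $k$. In fact, umbilicity of $\Delta$ together with \eqref{derS} at unit $T\in\Gamma(\Delta)$, namely $-S_f(\nabla_TT)=\lambda(t_f-\<R_fT,T\>)$, forces $\Delta$ to lie entirely in $\ker R_f$ or in $\ker(I-R_f)$. This is dimensionally possible for $k=2$ or $k=n-1$, and it is exactly the configuration that survives in Proposition~\ref{multi2}(i) when $S_f\neq S_g$. What rules it out here is that $\xi_f\in\Delta^\perp$ is a common eigenvector of $R_f$ and $R_g$ with the same eigenvalue. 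Then \eqref{dimkerR} forces $R_f=R_g$, so $A_f=A_g$ by \eqref{reltens3}, and $f,g$ are congruent.

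You also leave the rank-one subcase $\lambda_1\lambda_2=0$ open, but it must be excluded. With $r_2=0$, \eqref{deltaY} says nothing about $\<\nabla_TY,T\>$, and when $S_f\neq S_g$ this case produces conformally ruled hypersurfaces (Proposition~\ref{confrul}). The paper excludes it using $t_f(A_f-A_g)=R_fA_f-R_gA_g$, which follows from \eqref{derS2} once $\xi_f=\xi_g$ and $t_f=t_g$:
\begin{itemize}
\item evaluated at $Y$, it makes $Y$ a common principal direction with equal principal curvatures;
\item then $X$ becomes a common eigenvector of $R_f,R_g$ with distinct eigenvalues in $\{0,1\}$;
\item meanwhile $R_f=R_g$ on $\ker S_f\cap(\Delta\oplus\spa\{Y\})$, which contradicts $\dim\ker R_f=\dim\ker R_g=k-1$.
\end{itemize}
The subcase $\lambda_1=\lambda_2$ needs no separate treatment.

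Once $r_1r_2\neq0$, the same fact gives $\xi_f\in\ker(R_f-R_g)=\Delta$, so your ``projection $T_0$'' is $\xi_f$ itself. This is what makes $\Delta$ totally geodesic (by \eqref{deltaX} and \eqref{deltaY}). It also makes $\Delta$ $R_f$-invariant, since the $T$-component of the identity above gives $\lambda_1\<X,R_fT\>=0=\lambda_2\<Y,R_fT\>$. Hence $\Delta=\Delta_1\oplus\Delta_2\oplus\spa\{\xi_f\}$. Your splitting with a genuinely partial projection would need $R_f$-invariance of $\Delta\cap\ker S_f$, which you do not justify.

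With $\xi_f\in\Delta$, your items (a) and (b) stop being the main obstacle. Put $\bar\xi=\xi_f/\|\xi_f\|$; then \eqref{derR} gives $\nabla_{\bar\xi}\bar\xi=0$. For $T\in\Gamma(\Delta_2)$, \eqref{derS2} gives $\nabla_T\xi_f=\lambda(t_f-1)T$. So $\Delta_2$ is umbilical with mean curvature vector $\lambda(1-t_f)\|\xi_f\|^{-1}\bar\xi$. Its derivative along $\Delta_2$ lies in $\Delta_2$, because $\lambda$, $t_f$ and $\|\xi_f\|$ are constant along $\Delta_2$; the same argument works for $\Delta_1$.

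Finally, your dimension count for the target of the profiles is wrong. With $\dim\Delta=n-2$ one only gets $\dim(\Delta\cap\ker R_f)\geq k-3$, not $k-2$. To land in $\Hy^2\times\Sf^2$ you must show that $R_f|_{\Delta^\perp}$ has both eigenvalues $0$ and $1$. If it were $0$ or $I$, equal multiplicities would force the same for $R_g|_{\Delta^\perp}$, giving $R_f=R_g$ and hence congruence.
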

\proof
Since $S_f=S_g$, then $\xi_f=\xi_g$ and $R_f$ and $R_g$ share the same eigenvector $\xi_f$. 
Taking $T\in\Gamma(\Delta)$ of unit norm, the difference of \eqref{derS} for $f$ and $g$ evaluated in $T$ gives
$$
\lambda(t_g-t_f)=0.
$$
Since $r_f=1-t_f$ and $r_g=1-t_g$,  it follows from the preceding equation and the assumption that $\lambda \neq 0$ that $r_f=r_g$, hence $\xi_f=\xi_g\in\Gamma(\ker (R_f-R_g))$.

First assume that $\xi_f\in\Gamma(\Delta^\perp)$, or equivalently, that $\Delta\subset\ker S_f$ on some open subset $U\subset M^n$.
In this case, Eq. \eqref{derS}, evaluated in $T\in\Gamma(\Delta)$, gives
$$
-S_f(\nabla_TT)=\lambda(t_f-\<R_fT,T\>).
$$
Since $\Delta$ is an umbilical distribution,  the left-hand side of the previous equation does not depend on the choice of $T$, hence, the same holds for the right-hand side. Observe that the assumption $n\geq 6$ implies that the common eigenbundle $\Delta$ of $f$ and $g$ intersects $\ker(I-R_f)$ or $\ker R_f$. This and the previous equation show that $\Delta$ must be contained in one of these kernels, otherwise  $\<R_fT,T\>$ would depend on $T$. 
This argument also holds for  $g$, thus $R_fT=R_gT=0$ or $R_fT=R_gT=T$ for any $T\in\Gamma(\Delta)$. 
Since $S_f\neq0$, then  $\dim\ker R_f=\dim\ker R_g$ and $\dim\ker (I-R_f)=\dim\ker(I-R_g)$ by \ref{dimkerR}. Given that $\xi_f\in\Gamma(\Delta^\perp)$ is a common eigenvector of $R_f$ and $R_g$ corresponding to the same eigenvalue,  we necessarily have $R_f=R_g$,  hence $A_f=A_g$, which is a contradiction by Proposition \ref{cong}.

Thus  $\xi_f\not\in\Gamma(\Delta^\perp)$, or equivalently, $\Delta\not\subset \ker S_f$, on an open and dense subset. 
It follows from \eqref{deltaX} and \eqref{deltaY} that
$$
r_1\<\nabla_TX,T\>=0=r_2\<\nabla_TYT\>
$$
for any $T\in\Gamma(\Delta)$.
Observe that $r_1$ and $r_2$ cannot be both equal to $0$, otherwise $f$ and $g$ would be congruent by Proposition \ref{cong}.
Since $S_f=S_g$, then \eqref{derS2} for $f$ and $g$ gives 
\be\label{eq:RA1}
t_f(A_f-A_g)=R_fA_f-R_gA_g.
\ee
Suppose that $r_2=0$, and hence that $\lambda_2=0$. 
The previous equation, evaluated in $Y$, gives
$$
(R_f-R_g)A_fY=0.
$$
Then $\<A_fY, X\>=0$, hence $Y$ is a common eigenvector of $A_f$ and $A_g$, with respect to the same eigenvalue. 
Consequently, $A_fX=\mu X$ and $A_gX=\nu X$, where $\lambda_1=\mu-\nu$.
Denote $L=\Delta\oplus\spa\{Y\}$, and notice that $(A_f-A_g)|_L=(R_f-R_g)|_L=0$. 
Choosing $Z\in\Gamma(L)$, then the $Z$ component of \eqref{eq:RA1}, evaluated in $X$, gives
$$
\lambda_1\<X,R_fZ\>=0.
$$
This implies that $X$ is an eigenvector of $R_f$ and $R_g$. 
Since $\Delta\not\subset\ker S_f$, then $S_f(X)=0$ with $R_fX=X$ or $R_fX=0$, and similarly for $R_g$.
The subbundle $L$ is invariant by $R_f$ and $R_g$, and $R_f|_L=R_g|_L$. 
Since $\xi_f=\xi_g\in\Gamma(L)$, then $\bar{\Delta}=\ker S_f\cap L$ is invariant by $R_f$ and $R_g$, with $R_f|_{\bar{\Delta}}=R_g|_{\bar{\Delta}}$.
The eigenvalues of $R_f$ and $R_g$ corresponding to $X$ are different, for $r_1\neq 0$.
Using \ref{dimkerR}, we see that the multiplicities of the eigenvalues of $R_f$ and $R_g$ in $\bar{\Delta}$ do not coincide, and this is a contradiction.

Thus, we necessarily have $r_1r_2\neq 0$. Since $\xi_f\in\Gamma(\ker (R_f-R_g))$, then $\xi_f\in\Gamma(\Delta)$. 
It follows from \eqref{deltaX} and $\eqref{deltaY}$ that $\Delta$ is a totally geodesic distribution.
Given $T\in\Gamma(\Delta)$ of unit norm, then the $T$ component of $\eqref{eq:RA1}$, evaluated in $X$ and $Y$, gives 
$$
\lambda_1\<X,R_fT\>=0=\lambda_2\<Y,R_fT\>.
$$
This shows that $R_f$ leaves $\Delta$ and $\Delta^\perp$ invariant, and the same holds for $R_g$.
From \eqref{dimkerR}, we know that the multiplicities of the eigenvalues $0$ and $1$ for $R_f$ and $R_g$ coincide, since they depend only on the fixed number $k$.
Since $R_f|_{\Delta}=R_g|_{\Delta}$, then $R_f|_{\Delta^\perp}$ and $R_g|_{\Delta^\perp}$ have the same eigenvalues.
If $R_f|_{\Delta^\perp}=I$, then $R_g|_{\Delta^\perp}=I$, hence the immersions are congruent by \eqref{reltens3} and Proposition \ref{cong}, a contradiction.
The same holds if $R_f|_{\Delta^\perp}=0$. 
Thus both $0$ and $1$ are eigenvalues of $R_f|_{\Delta^\perp}$ and $R_g|_{\Delta^\perp}$.

Applying \eqref{derR} for $\bar{\xi}=\xi_f/\|\xi_f\|$ gives
\begin{align*}
\nabla_{\bar{\xi}}R_f\bar{\xi}-R_f\nabla_{\bar{\xi}}\bar{\xi}&=\|\xi_f\|\lambda\bar{\xi}+\lambda\xi_f\\
\bar{\xi}(r_f)\bar{\xi}-(R_f-r_fI)\nabla_{\bar{\xi}}\bar{\xi}&=2\lambda\xi_f.
\end{align*}
Notice that $\nabla_{\bar{\xi}}\bar{\xi}$ is orthogonal to $\bar{\xi}$. 
Since $r_f$ is an eigenvalue of multiplicity one, the previous equation implies that
\be\label{xi1}
\nabla_{\bar{\xi}}\bar{\xi}=0.
\ee
Let $\Delta_0$ be given by $\Delta_0=\Delta^\perp\oplus\spa\{\bar{\xi}\}$.
Taking any  $Z\in\Gamma(\Delta^\perp)$, we have from \eqref{derS2} that $\nab_Z\bar{\xi}\in\Gamma(\Delta_0)$. 
Moreover, the fact that $\Delta$ is totally geodesic implies that $\nabla_{\bar{\xi}}Z\in\Gamma(\Delta_0)$.
This together with \eqref{xi1}, \eqref{XXT}, \eqref{XTY}, \eqref{YYT} and \eqref{YTX} show that $\Delta_0$ is totally geodesic.

Let $\Delta_i$, $i=1,2$, be given by $\Delta_1=\ker(R_f)\cap\Delta$ and $\Delta_2=\ker(I-R_f)\cap\Delta$.  Note that $\dim \Delta_1\geq k-3$ and $\dim \Delta_2\geq n-k-2$. Thus $\Delta_1$ (respectively, $\Delta_2) $ might be trivial only if $k\in \{2,3\}$ (respectively, $k\in \{n-1, n-2\}$). 
If $\Delta_2$ is nontrivial, taking $T\in\Gamma(\Delta_2)$,  Eq. \eqref{derS2} gives
$$
\nabla_T\xi_f=T(\|\xi_f\|)\bar{\xi}+\|\xi_f\|\nabla_T\bar{\xi}=\lambda(t_f-1)T.
$$
Therefore $\|\xi_f\|$ is constant along $\Delta_2$, and $\Delta_2$ is an umbilical distribution. 
Since $S_f(T)=T(\lambda)$ for $T\in\Gamma(\Delta)$, then $\lambda$ is constant along $\Delta_2$. Moreover, \eqref{dert} implies that $t_f$ is also constant along $\Delta_2$.
Hence, the mean curvature vector field $\delta_2=\lambda(1-t_f)/\|\xi_f\|\bar{\xi}$ of  $\Delta_2$ satisfies
$$
(\nabla_T\delta_2)_{\Delta_0}=0.
$$
This shows that $\Delta_2$ is in fact a spherical distribution.
A similar argument shows that also $\Delta_1$, if nontrivial, is a spherical distribution. 
Choosing $Z\in\Gamma(\Delta_0)$ and $T\in\Gamma(\Delta_2)$, then \eqref{derR} gives
$$
(I-R_f)\nabla_ZT=0.
$$
This implies that $\Delta_0\oplus\Delta_2$ is a totally geodesic distribution.
The same argument shows that $\Delta_0\oplus\Delta_1$ is also totally geodesic.
Summarizing, we have an orthogonal decomposition
$$
TM=\Delta_0\oplus\Delta_1\oplus\Delta_2,
$$
with $\Delta_0\oplus\Delta_i$ totally geodesic and $\Delta_i$ spherical, $i=1,2$.
It follows from Hiepko's theorem \cite{hiepko} (see also Theorem 10.4 in \cite{DT}) that (locally) the Riemannian manifold $M^n$ is (isometric to) a warped product
$$
M^n=M_0^3\times_{\rho_1}M^{\ell_1}_1\times_{\rho_2}M^{\ell_2}_2,
$$
where $\ell_1+\ell_2=n-3$, $\ell_i\geq 0$ and $\rho_i\in C^\infty(M_0)$, $i=1,2$.

Observe that $\Delta_1\subset\ker(R_f)\cap\ker(R_g)$ and $\Delta_2\subset \ker(I-R_f)\cap\ker(I-R_g)$. Thus, we can apply Proposition~\ref{propmulti} and conclude that $f$ and $g$ are  rotational or doubly rotational hypersurfaces. In the latter case,
if we fix $\bar{x}_0\in M_0$, the immersions $f(\bar{x}_0,\cdot,\cdot)\colon M_1\times M_2\to\Hy^k\times\Sf^{n-k+1}$ and $g(\bar{x}_0,\cdot,\cdot)\colon M_1\times M_2\to\Hy^k\times\Sf^{n-k+1}$ are isometric extrinsic products of umbilical immersions, say $f(\bar{x}_0,\cdot,\cdot)=h_1^f\times h_2^f$ and $g(\bar{x}_0,\cdot,\cdot)=h_1^g\times h_2^g$. 
Moreover $h_i^f$ is congruent to $h_i^g$, $i=1,2$. 
Hence $f(\bar{x}_0,\cdot,\cdot)$ and $g(\bar{x}_0,\cdot,\cdot)$ are congruent in $\Hy^k\times\Sf^{n-k+1}$. Therefore, we can assume that $f$ and $g$ are doubly rotational hypersurfaces based on the same extrinsic product of umbilical immersions. Similarly if $f$ and $g$ are both rotational hypersurfaces.
 We have seen that $R_f|_{\Delta^\perp}$ and $R_g|_{\Delta^\perp}$ have $0$ and $1$ as eigenvalues. Hence, the profiles of $f$ and $g$ are necessarily isometric hypersurfaces $f_0, g_0\colon M_0^3\to \Hy^2\times\Sf^2$. 
\qed
\vspace{1ex}

In the sequel, we assume that $S_f\neq S_g$ everywhere, and we study separately the cases in which $A_f-A_g$ has rank one or two.

\begin{proposition}\label{confrul}
Let $f\colon M^n\to \Hy^k\times \Sf^{n-k+1}$, $n\geq 6$, $2\leq k\leq n-1$, be a truly deformable hypersurface that does not split. Assume that $f$ admits a true deformation $g\colon M^n\to \Hy^k\times \Sf^{n-k+1}$ such that the common eigenbundle $\Delta$ of $f$ and $g$ and the common principal curvature $\lambda$ of $A_f|_{\Delta}$ and $A_g|_{\Delta}$, given by Proposition \ref{commoneig2},  satisfy  $\lambda\neq 0$ and $\dim \Delta=n-2$. If, in addition, $S_f\neq S_g$ and $A_f-A_g$ has rank one everywhere, then $f$ and $g$ are conformally ruled with the same rulings.  
\end{proposition}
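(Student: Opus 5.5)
Since $A_f-A_g$ has rank one, I will take the frame $\{X,Y\}$ of $\Delta^\perp$ with $\lambda_1\neq 0$ and $\lambda_2=0$. Then \eqref{rlambda} gives $r_1=-\lambda\lambda_1\neq 0$ and $r_2=0$. The candidate ruling distribution is $L=\Delta\oplus\spa\{Y\}=\ker(A_f-A_g)=\ker(R_f-R_g)$, of rank $n-1$. The plan is to show three things: $L$ is umbilical; $A_f$ and $A_g$ both leave $L$ invariant with $A_f|_L=A_g|_L=\lambda I$ (or at least with $L$ umbilical for the second fundamental forms); and then to conclude that $f|_{\text{leaf}}$ and $g|_{\text{leaf}}$ are umbilical submanifolds of $\Hy^k\times\Sf^{n-k+1}$.

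The first step is to get the shape operators on $L$. Since $(A_f-A_g)Y=0$, the vector $A_fY-A_gY$ vanishes. I would use the Gauss equation \eqref{Gauss} for $f$ and $g$, subtract, and substitute $A_g=A_f-\lambda_1 X\otimes X^\flat$ and $R_g=R_f+\lambda\lambda_1X\otimes X^\flat$. Evaluating on $Y,T$ with $T\in\Gamma(\Delta)$, and using $r_1\neq 0$, this should force $\<A_fY,X\>=0$ and $\<A_fY,Y\>=\lambda$. Then $Y$ is a common eigenvector with eigenvalue $\lambda$, and by Lemma \ref{lemdeltaumb} the space $\Delta'$ contains $L$. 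Hence $L=\ker(A_f-\lambda I)\cap\ker(A_g-\lambda I)$ is umbilical by Proposition \ref{umbeig}. If the Gauss computation only yields $\<A_fY,X\>\mu=0$-type relations, I would instead use Codazzi \eqref{codazzi} along $T$ together with \eqref{XTY}, \eqref{YTX}, \eqref{TXY} and \eqref{Tlambda2}. Since $r_2=0$, the relations \eqref{YYT} and \eqref{deltaY} then give $\lambda_2S_f(T)=0$ trivially and $(S_f-S_g)(Y)=0$. Combined with $S_f\neq S_g$ and \eqref{deltaX}, this shows that $S_f-S_g$ is a nonzero multiple of $X^\flat$.

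The second step is to show that the leaves of $L$ are mapped to umbilical submanifolds. In the model $\tilde f\colon M\to\Les^{n+3}$, \eqref{sfft} restricted to $L$ gives
$$\a_{\tilde f}(Z,W)=\<Z,W\>(\lambda\, i_*N_f+f_1)-\<R_fZ,W\>\tilde f.$$
The $R_f$-term is not a multiple of $\<Z,W\>$. So I will argue through the conformal model, where umbilicity of a submanifold is conformally invariant. Under $\Psi$, being umbilical in $\Hy^k\times\Sf^{n-k+1}$ is equivalent to the leaf being umbilical in $\R^{n+1}$. By the proof of Proposition \ref{umbeig}, $L$ is a principal eigendistribution of $\hat f=\Psi\circ f$. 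An umbilical distribution that is an eigendistribution of a hypersurface of Euclidean space is mapped onto umbilical submanifolds exactly when its mean curvature vector satisfies the Codazzi-type condition that the eigenvalue is constant along the leaves, up to the umbilical correction. Concretely, I would verify that the leaf is umbilical in $\R^{n+1}$ by showing that its mean curvature vector in $\R^{n+1}$ is parallel in the normal bundle of the leaf. This reduces to $L$ being spherical with respect to the induced conformal metric, or to a Codazzi computation along $L$ using \eqref{Tlambda1} and $(S_f-S_g)|_L=0$. Since $g$ has the same $L$ and the same data, the same argument applies to $g$, so $f$ and $g$ are conformally ruled with the same rulings.

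I expect the main obstacle to be this last step: proving umbilicity of the leaves in the ambient space rather than just umbilicity of $L$ in $M$. The codimension of a leaf in $\Hy^k\times\Sf^{n-k+1}$ is two, so one must control the normal connection. My first attempt would be to differentiate $\lambda$ along $L$ via Codazzi, aiming to show $Z(\lambda)$ is expressible through $S_f(Z)$ and the mean curvature of $L$. The key input is \eqref{deltaX}, which gives $\<\nabla_TX,T\>=\lambda(S_f-S_g)(X)/r_1$ and ties the umbilicity of $L$ to $S_f-S_g$.
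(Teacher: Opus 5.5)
\textbf{There is a genuine gap: your first step fails as stated, and the missing idea is a direct proof that $L$ is umbilical.}

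Subtract the Gauss equations \eqref{Gauss} of $f$ and $g$, using $D=A_f-A_g=\lambda_1\<\cdot,X\>X$ and $R_f-R_g=-\lambda D$ from \eqref{reltens3}. This gives
\[
(A_f-\lambda I)U\wedge DV+DU\wedge(A_f-\lambda I)V=0.
\]
At $(Y,T)$ this identity is vacuous. At $(X,Y)$ it yields only $(A_f-\lambda I)Y\in\spa\{X\}$, that is, $\<A_fY,Y\>=\lambda$ and $\<A_fY,T\>=0$. It says nothing about $\mu=\<A_fY,X\>$.

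In fact $\mu$ can never vanish in this case. Since $A_gY=A_fY$, $\mu=0$ would put $Y$ in $\ker(A_f-\lambda I)\cap\ker(A_g-\lambda I)$. That intersection is the $\Delta$ of this subsection, so $\dim\Delta=n-2$ would be contradicted. Hence $L=\Delta\oplus\spa\{Y\}$ is not an eigendistribution of $A_f$, and Proposition \ref{umbeig} and Lemma \ref{lemdeltaumb} say nothing about it.

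The umbilicity of $L$ therefore has to be proved by hand. You need three facts:
\begin{itemize}
\item $\<\nabla_TX,Y\>=0$, from \eqref{TXY};
\item $\<\nabla_YT,X\>=0$, from \eqref{YTX};
\item the key identity $\<\nabla_YX,Y\>=\<\nabla_TX,T\>$.
\end{itemize}
The last one comes from the $Y$-component of the difference of the Codazzi equations \eqref{codazzi} at $X,Y$, namely $-\lambda_1\<\nabla_YX,Y\>=(S_f-S_g)(X)$, compared with \eqref{deltaX} via $r_1=-\lambda\lambda_1$. Your fallback never produces this identity. Also, \eqref{XTY} and \eqref{Tlambda2}, which you list, are vacuous when $r_2=\lambda_2=0$.

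\textbf{Your second step is aimed at a non-issue.} The rulings must be umbilical in $\Hy^k\times\Sf^{n-k+1}$, not in $\Les^{n+3}$. The $R_f$-term in \eqref{sfft} is the second fundamental form of $\Hy^k\times\Sf^{n-k+1}$ in $\Les^{n+3}$ and plays no role. The definition of conformally ruled asks neither for parallel mean curvature of the leaves nor for sphericity of $L$.

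Once $L$ is umbilical in $M^n$ with mean curvature vector $\eta$, the second fundamental form of a leaf in $\Hy^k\times\Sf^{n-k+1}$ is $\<U,V\>\eta+\<A_fU,V\>N_f$. For $U,V\in L$ we have $\<A_fU,V\>=\lambda\<U,V\>$, because $A_f|_\Delta=\lambda I$ and $\<A_fY,Y\>=\lambda$. Only the compression of $A_f$ to $L$ matters, not invariance of $L$. The same holds for $g$, since $A_gY=A_fY$.

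Your Gauss computation at $(X,Y)$ is a quicker route to $\<A_fY,Y\>=\lambda$ than the paper's. The paper combines the difference of \eqref{derR} along $Y$ with \eqref{deltaX} and uses $(S_f-S_g)(X)\neq 0$. Still, without the umbilicity of $L$ the argument is incomplete.
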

\proof
Following the previous notation, without loss of generality, we may assume that $r_2=0=\lambda_2$. 
Then, equations  \eqref{TXY} and \eqref{deltaY} imply that
\be\label{rul1}
\<\nabla_TX,Y\>=0
\ee
and 
$$
(S_f-S_g)(Y)=0,
$$
respectively.
Taking the difference of \eqref{derR} for $f$ and $g$ evaluated in $Y$, we obtain
$$
-(R_f-R_g)\nabla_YY=\<A_fY,Y\>(\xi_f-\xi_g),
$$
which implies that
\be\label{rul2}
-r_1\<\nabla_YY,X\>=\<A_fY,Y\>(S_f-S_g)(X).
\ee
The difference of \eqref{codazzi} for $f$ and $g$, evaluated in $X$ and $Y$, gives
$$
-(A_f-A_g)\nabla_XY-Y(\lambda_1)X-\lambda_1\nabla_YX=(S_f-S_g)(X)Y.
$$
Taking the $Y$-component of both sides of the above equation yields
\be\label{YYX}
-\lambda_1\<\nabla_YX,Y\>=(S_f-S_g)(X),
\ee
which, together with \ref{deltaX}, implies that
$$
\<\nabla_YX,Y\>=\<\nabla_TX,T\>.
$$
It follows from \eqref{YTX}, \eqref{rul1} and the previous equation that $\spa\{Y\}\oplus\Delta$ is an umbilical distribution.
Finally, by \ref{deltaX} and \eqref{rul2} we have
$$
\lambda=\<A_fY,Y\>,
$$
hence the hypersurfaces $f$ and $g$ are conformally ruled with the leaves of $L=\spa\{Y\}\oplus\Delta$ as common rulings.
\qed

\vspace{1ex}

\begin{proposition}\label{multi2}
Let $f\colon M^n\to \Hy^k\times \Sf^{n-k+1}$, $n\geq 6$, $2\leq k\leq n-1$, be a truly deformable hypersurface that does not split in any open subset. Assume that $f$ admits a true deformation $g\colon M^n\to \Hy^k\times \Sf^{n-k+1}$ such that $S_f\neq S_g$ everywhere, and such that the common principal curvature $\lambda$ of $f$ and $g$ given by Proposition \ref{commoneig2}, together with $\Delta=\ker(A_f-\lambda I)\cap \ker(A_g-\lambda I)$,  satisfy  $\lambda\neq 0$, $\dim \Delta=n-2$ and $\rank(A_f-A_g)=2$.
Then the following holds:
\begin{itemize}
\item [(i)]  on each connected component $\mathcal{V}$ of the interior of the subset where $\Delta\subset\ker S_f$, 
either $k=2$ or  $k=n-1$, and $f|_\mathcal{V}$ and $g|_\mathcal{V}$ are locally rotational hypersurfaces  of types $(I)$ or $(II)$, respectively,  with isometric surfaces  $f_0,g_0\colon M_0^2\to \mathbb{H}^\ell\times \mathbb{S}^{3-\ell}$ as profiles, respectively, where $\ell=2$ or $\ell=1$;
 \item [(ii)]  on each connected component $\mathcal{V}$ of the open subset where $\Delta\not \subset\ker S$, either $f|_\mathcal{V}$ and $g|_\mathcal{V}$ are locally doubly rotational hypersurfaces, with isometric three-dimensional hypersurfaces $f_0,g_0\colon M_0^3\to \mathbb{H}^\ell\times \mathbb{S}^{4-\ell}$, $\ell\in \{1,2,3\}$, as profiles, 
 or $2\leq k\leq 3$ (respectively, $n-2\leq k\leq n-1$) and $f, g$ are rotational hypersurfaces  of type $(I)$ (respectively, type $(II)$) with three-dimensional hypersurfaces $f_0, g_0\colon M_0^3\to \mathbb{H}^\ell\times \mathbb{S}^{4-\ell}$, $\ell\in \{2,3\}$ (respectively, $\ell\in \{1,2\}$), as profiles.
\end{itemize}
\end{proposition}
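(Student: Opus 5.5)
The plan follows the template of the proofs of Propositions \ref{sameS} and \ref{confrul}. We use the frame $\{X,Y\}$ of $\Delta^\perp$ diagonalizing $A_f-A_g$, with $\lambda_1\lambda_2\neq0$ now, so that $r_1r_2\neq 0$ by \eqref{rlambda}. Equations \eqref{XTY} and \eqref{YTX} then give $\<\nabla_XT,Y\>=0=\<\nabla_YT,X\>$. Equations \eqref{XXT}, \eqref{YYT}, \eqref{deltaX} and \eqref{deltaY} express the remaining Christoffel symbols mixing $\Delta$ and $\Delta^\perp$ in terms of $S_f|_\Delta$ and $(S_f-S_g)|_{\Delta^\perp}$. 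Since $\Delta$ is umbilical by Lemma \ref{lemdeltaumb}, we get $\<\nabla_TX,T\>=\<\nabla_SX,S\>$ for unit $T,S\in\Gamma(\Delta)$ and $\<\nabla_TX,S\>=0$ for $T\perp S$.

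Next we decompose $\Delta$ via $R_f$. As $\Delta\subset\ker(R_f-R_g)$, $R_f$ and $R_g$ agree on $\Delta$. Taking the difference of \eqref{derS2} for $f$ and $g$, as in \eqref{eq:RA1} but now with an extra term $\nabla_Z(\xi_f-\xi_g)$, and projecting appropriately, we aim to show that $R_f$ and $R_g$ leave $\Delta$ and $\Delta^\perp$ invariant, up to the position of $\xi_f$ and $\xi_g$. This is where the two cases split.

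\emph{Case (i): $\Delta\subset\ker S_f$ on an open set $\mathcal V$.} By \eqref{derS}, $\<R_fT,T\>$ is independent of the unit vector $T\in\Delta$, so $\Delta\subset\ker R_f$ or $\Delta\subset\ker(I-R_f)$, and likewise for $g$. By \eqref{dimkerR}, $\dim\Delta=n-2$ forces $k-1\le 2$, i.e.\ $k=2$ when $\Delta\subset\ker(I-R_f)$; symmetrically $k=n-1$ when $\Delta\subset\ker R_f$. We then set $\Delta_0=\Delta^\perp$ and take $\Delta_2=\Delta$ (respectively $\Delta_1=\Delta$). Using \eqref{derR} as in the proof of Proposition \ref{sameS}, we check that:
\begin{itemize}
\item[(a)] $\Delta_0$ is totally geodesic;
\item[(b)] $\Delta$ is spherical, because its mean curvature vector is determined by $\lambda$, $t_f$ and $S_f|_{\Delta^\perp}$, which are constant along $\Delta$ thanks to $S_f|_\Delta=T(\lambda)=0$, \eqref{dert}, \eqref{Tr1} and \eqref{Tr2}.
\end{itemize}
Hiepko's theorem then gives $M=M_0^2\times_\rho M^{n-2}$. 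Proposition \ref{propmulti} yields a rotational hypersurface of type $(I)$ or $(II)$ with profile a surface in $\Hy^2\times\Sf^1$ or $\Hy^1\times\Sf^2$. The profiles of $f$ and $g$ are isometric, since $M_0$ is intrinsic.

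\emph{Case (ii): $\Delta\not\subset\ker S_f$.} We try to show, as in Proposition \ref{sameS}, that $\xi_f$ and $\xi_g$ lie in $\Delta\oplus\Delta^\perp$ with a common component in $\Delta$. From $S_f|_\Delta=S_g|_\Delta=d\lambda|_\Delta$ we get $(\xi_f-\xi_g)\in\Delta^\perp$. We then define $\bar\xi$ as the unit vector along the $\Delta$-component of $\xi_f$ and set $\Delta_0=\Delta^\perp\oplus\spa\{\bar\xi\}$, $\Delta_1=\ker R_f\cap\Delta\cap\bar\xi^\perp$ and $\Delta_2=\ker(I-R_f)\cap\Delta\cap\bar\xi^\perp$. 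The remaining steps are:
\begin{itemize}
\item[(1)] Show that $\Delta_0$ is totally geodesic, using \eqref{derS2}, \eqref{XXT} to \eqref{deltaY}, and a computation like \eqref{xi1}.
\item[(2)] Show that $\Delta_1$ and $\Delta_2$ are spherical, as in Proposition \ref{sameS}.
\item[(3)] Show that $\Delta_0\oplus\Delta_i$ is totally geodesic, via \eqref{derR}.
\end{itemize}
Hiepko's theorem and Proposition \ref{propmulti} then give doubly rotational hypersurfaces with three-dimensional profiles. When $\Delta_1$ or $\Delta_2$ is trivial, which by \eqref{dimkerR} requires $k\le3$ or $k\ge n-2$, we obtain rotational hypersurfaces instead, with the stated ranges of $\ell$.

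The main obstacle is case (ii), specifically establishing that $\Delta_0$ is totally geodesic while $S_f\ne S_g$. Here $\xi_f\neq\xi_g$, so $\bar\xi$ must be shown to be a common eigenvector, and $\nabla_{\bar\xi}\bar\xi\in\Delta_0$ must be proved from the difference of \eqref{derR} for $f$ and $g$. I would first try to prove $r_f=r_g$ on the $\Delta$-component and then mimic \eqref{xi1}.
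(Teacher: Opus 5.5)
Your overall architecture is the paper's: the split according to whether $\Delta\subset\ker S_f$, the distribution $\Delta_0=\Delta^\perp\oplus\spa\{\bar\xi\}$ (your $\bar\xi$ is the paper's unit $Z\in\Delta\cap\bar\Delta^\perp$, with $\bar\Delta=\Delta\cap\ker S_f$), the distributions $\Delta_1,\Delta_2$, Hiepko's theorem and Proposition \ref{propmulti}. Case (i) is essentially the paper's argument. The genuine gap is in case (ii), at exactly the step you flag, and the route you propose there cannot work.

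\begin{itemize}
\item Since $S_f|_\Delta=S_g|_\Delta$ but $S_f\neq S_g$, at least one of $\xi_f,\xi_g$ has a nonzero $\Delta^\perp$-component.
\item $\bar\xi$ is never an eigenvector of $R_f$ for the eigenvalues $0$ or $1$, because such eigenvectors lie in $\ker S_f$ while $S_f(\bar\xi)\neq 0$. So $\bar\xi$ is an eigenvector of $R_f$ only if $\xi_f\parallel\bar\xi\in\Delta$, and likewise for $g$.
\item Hence $\bar\xi$ is never a common eigenvector of $R_f$ and $R_g$. A computation like \eqref{xi1}, which needs $R_f\bar\xi=r_f\bar\xi$ with $r_f$ simple and $\xi_f\in\Delta$, is therefore unavailable.
\item Nor should you expect $r_f=r_g$. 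The difference of \eqref{derS} at $(T,T)$, $T\in\Delta$ unit, gives $\lambda(t_f-t_g)=-(S_f-S_g)(\delta)$. By \eqref{deltaX} and \eqref{deltaY}, the mean curvature vector $\delta$ of $\Delta$ has components $(S_f-S_g)(X)/\lambda_1$ and $(S_f-S_g)(Y)/\lambda_2$. So $t_f\neq t_g$, e.g., whenever $\lambda_1\lambda_2>0$.
\end{itemize}

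The paper never needs $Z$ to be an eigenvector. Its chain is as follows.

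\begin{itemize}
\item First it secures $\langle\nabla_XT,Y\rangle=\langle\nabla_YT,X\rangle=\langle\nabla_TX,Y\rangle=0$ for all $T\in\Delta$. The last identity follows from \eqref{TXY} only when $\lambda_1\neq\lambda_2$. Otherwise one must choose $Y\in\ker(S_f-S_g)\cap\Delta^\perp$ and use the difference of \eqref{derS} in $T,Y$.
\item You skip this, but your sphericity claim (b) in case (i) needs it too. There the data governing $\delta$ are $(S_f-S_g)|_{\Delta^\perp}$ and $\lambda_1,\lambda_2$. They are not $t_f$ and $S_f|_{\Delta^\perp}$, which via \eqref{derS} only give $\langle\delta,\xi_f\rangle$.
\item Next, for $T\in\bar\Delta$, \eqref{XXT} and \eqref{XTY} give $\nabla_XT\in\Delta$. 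The difference of \eqref{derS} in $X,T$ then yields $\langle X,R_fT\rangle=0$, and similarly $\langle Y,R_fT\rangle=0$. Hence $R_f\bar\Delta\subset\bar\Delta$.
\item You only announce this invariance, but it is what makes $TM=\Delta_0\oplus\Delta_1\oplus\Delta_2$ true at all. Dimension counting alone gives only $\dim(\Delta_1\oplus\Delta_2)\geq n-5$, versus $\dim\bar\Delta=n-3$.
\item Total geodesicity of $\Delta_0=\spa\{X,Y,Z\}$ then comes from three facts. The $\bar\Delta$-component of \eqref{derR} in $X,Z$ (and $Y,Z$) gives $\langle\nabla_XT_i,Z\rangle(\langle R_fZ,Z\rangle-c_i)=0$ with $c_1=0$, $c_2=1$, and $0<\langle R_fZ,Z\rangle<1$ because $S_f(Z)\neq0$. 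Equation \eqref{derS} in $Z,T$ plus umbilicity of $\Delta$ gives $\langle\nabla_ZT,Z\rangle=0$. Finally, \eqref{XXT} and \eqref{YYT} give $(\nabla_XX)_\Delta=(\nabla_YY)_\Delta\perp\bar\Delta$.
\item Likewise, sphericity of $\Delta_i$ is not ``as in Proposition \ref{sameS}'', where $\xi_f\in\Delta$. One shows $(\nabla_T\delta)_{\Delta_0}=0$ by differentiating \eqref{deltaX} and \eqref{deltaY} along $\bar\Delta$. Then one shows $\langle\nabla_{T_i}\delta_i,Z\rangle=0$ via \eqref{derS} and \eqref{dert}.
\end{itemize}
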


\proof 
With the notations introduced before Proposition \ref{sameS}, the assumption that $\rank(A_f-A_g)=2$ is equivalent to $\lambda_1\lambda_2\neq 0$.

First, assume that $\Delta\subset\ker S_f$. In this case,
Eq. \eqref{derS} evaluated in $T\in\Gamma(\Delta)$ gives
$$
-S_f(\nabla_TT)=\lambda(t_f-\<R_fT,T\>).
$$
Since $\Delta$ is an umbilical distribution,  the left-hand side of the previous equation does not depend on the choice $T$, hence the same holds for the right-hand side. Observe that, by the assumption $n\geq 6$, the common eigenbundle $\Delta$ of $f$ and $g$ intersects  $\ker(I-R_f)$ or $\ker R_f$. This and the previous equation show that $\Delta$ must be contained in one of these kernels, because, otherwise,  $\<R_fT, T\>$ would depend on $T$. Thus, from our assumptions, the possible values of $k$ lie in $\{2,n-1\}$.
Then,  from \eqref{XXT}, \eqref{XTY}, \eqref{YYT} and \eqref{YTX} it follows that $\Delta^\perp$ is a totally geodesic distribution.

Let $\delta$ denote the mean curvature vector field of $\Delta$, that is, $(\nabla_TT)_{\Delta^\perp}=\delta$ for any $T\in\Gamma(\Delta)$ of unit length.
The difference of \eqref{derS} for $f$ and $g$, evaluated in $T$ and $X$, gives
\be\label{TSX}
T((S_f-S_g)(X))-(S_f-S_g)(\nabla_TX)=0.
\ee
If $r_1\neq r_2$ (or equivalently, $\lambda_1\neq\lambda_2$), then
\eqref{TXY} implies that $\<\nabla_TX,Y\>=0$. 
If $r_1=r_2$, we take $X$ and $Y$ such that $Y\in\Gamma(\ker (S_f-S_g)\cap\Delta^\perp)$. Then, Eq. \eqref{derS} for $f$ and $g$, evaluated in $T$ and $Y$, gives
$$
(S_f-S_g)(\nabla_TY)=0,
$$
which implies that $\<\nabla_TY,X\>=0$ also in this case.
Hence $\nabla_TX\in\Gamma(\Delta)$, and therefore \eqref{TSX} becomes
$$
T((S_f-S_g)(X))=0.
$$
Taking the derivative of \eqref{deltaX} with respect to $T$, keeping in mind \eqref{Tr1} and the equation above, we obtain
$$
\<\nabla_T\delta,X\>=0.
$$
In a similar way we see that $\<\nabla_T\delta,Y\>=0$, hence $(\nabla_T\delta)_{\Delta^\perp}=0$.
Therefore, $\Delta$ is a spherical distribution, and $\Delta^\perp$ is totally geodesic.

It follows from Hiepko's theorem  that $M^n$ is locally a warped product $M^n=M_0^2\times _\rho M_1^{n-2}$, whose product net is $(\Delta, \Delta^\perp)$. 
Since $\Delta$ is an eigenspace of $A_f$ and $A_g$ and $\Delta$ is contained in $\ker(R_f)\cap\ker(R_g)$ or $\ker(I-R_f)\cap\ker(I-R_g)$, we are in conditions for applying Proposition \ref{propmulti}. Then $f$ and $g$ are locally rotational hypersurfaces of types $(I)$ or $(II)$, with isometric surfaces  $f_0,g_0\colon M_0^2\to \mathbb{H}^\ell\times \mathbb{S}^{3-\ell}$ as profiles, where $\ell=2$ or $\ell=1$ respectively.
\vspace{2ex}

Let us now assume that $\Delta\not\subset\ker S_f$,  and denote $\bar{\Delta}=\ker S_f\cap\Delta=\ker S_g\cap\Delta$.
If $\lambda_1-\lambda_2\neq0$, then \eqref{XTY}, \eqref{TXY} and \eqref{YTX} give 
\be\label{nabXTY}
\<\nabla_XT,Y\>=\<\nabla_YT,X\>=\<\nabla_TX,Y\>=0
\ee
for any $T\in\Gamma(\Delta)$.
If $\lambda_1=\lambda_2$, we take $X$ and $Y$ such that $Y\in\Gamma(\ker(S_f-S_g)\cap\Delta^\perp)$. Then, equation  \eqref{derS} for $f$ and $g$, evaluated in $T$ and $Y$, gives
$$
(S_f-S_g)(\nabla_TY)=0,
$$
which implies that $\<\nabla_TY,X\>=0$ for any $T\in\Gamma(\Delta)$. Hence, equation  \eqref{nabXTY} still holds in this case.

Since $r_i=-\lambda\lambda_i$, $i=1,2$, then \eqref{XXT} and \eqref{YYT} give 
\begin{equation}\label{XXYY}
    (\nabla_XX)_{\Delta}=(\nabla_YY)_{\Delta},
\end{equation}
and this vector field is orthogonal to $\bar{\Delta}$. Let $Z\in\Gamma(\Delta\cap\bar{\Delta}^\perp)$ have unit length, and pick $T\in\Gamma(\bar{\Delta})$ also with unit length. Taking the difference of \eqref{derS} for $f$ and $g$, evaluated in $X$ and $T$, we have
$$
-(S_f-S_g)(\nabla_XT)=-\lambda_1\<X,R_fT\>.
$$
Since $T\in\Gamma(\bar{\Delta})$, from \eqref{nabXTY} it follows that $\nabla_XT\in\Gamma(\Delta)$, and hence $(S_f-S_g)(\nabla_XT)=0$. Thus, the previous equation gives
$$
\<X,R_fT\>=0,
$$
and, similarly,
$$
\<Y,R_fT\>=0,
$$
which implies that $R_f(\bar{\Delta})\subset\Delta$. Recall from \eqref{kerS} that $\ker S_f$ is invariant by $R_f$, hence $\bar{\Delta}$ is invariant by $R_f$ (and therefore by $R_g$). Denote $\Delta_0=\bar{\Delta}^\perp$, $\Delta_1=\ker(R_f)\cap\Delta$, and $\Delta_2=\ker(I-R_f)\cap\Delta$. Then we have an orthogonal decomposition
$$
TM=\Delta_0\oplus\bar{\Delta}=\Delta_0\oplus\Delta_1\oplus\Delta_2.
$$
From \eqref{derR} we see that
$$
\nabla_XR_fZ-R_f\nabla_XZ=S_f(Z)A_fX,
$$
whose $T$-component is
$$
\<R_fZ,\nabla_XT\>+\<\nabla_XZ,R_fT\>=0.
$$
Since $\nabla_XT$ has no component in $\Delta^\perp$, then
$$
\<R_fZ,Z\>\<\nabla_XT,Z\>+\<\nabla_XZ,R_fT\>=0.
$$
If $T=T_2\in\Delta_2$, then 
$$ 
\<\nabla_XT_2,Z\>(\<R_fZ,Z\>-1)=0,
$$
while, if $T=T_1\in\Delta_1$, then
$
\<\nabla_XT_1,Z\>\<R_fZ,Z\>=0.
$
Since $0<\<R_fZ,Z\><1$, for $S_f(Z)\neq 0$, then
\be\label{XZ}
\nabla_XZ\in\Gamma(\Delta_0),
\ee
and the same arguments give 
\be\label{YZ}
\nabla_YZ\in\Gamma(\Delta_0).
\ee
From \eqref{derS}, evaluated in $Z$ and $T$, we obtain
$
S_f(\nabla_ZT)=\lambda\<Z,R_fT\>=0,
$
which, together with the umbilicity of $\Delta$, implies that
\be\label{ZZ}
\<\nabla_ZT,Z\>=0.
\ee
Summarizing, Eqs. \eqref{nabXTY}, \eqref{XXYY}, \eqref{XZ}, \eqref{YZ}, \eqref{ZZ}, the definition of $Z$ and the umbilicity of $\Delta$ imply that $\Delta_0$ is a totally geodesic distribution. 

Let $T\in\Gamma(\bar{\Delta})$ and $T_i\in\Gamma(\Delta_i)$, $i=1,2$, have unit length. Then, Eq. \eqref{derR} gives
\be\label{nabTT1}
(I-R_f)\nabla_TT_2=\lambda\<T,T_2\>\xi_f
\ee
and 
\be\label{nabTT2}
R_f\nabla_TT_1=-\lambda\<T,T_1\>\xi_f.
\ee
In particular,
\be\label{TTi}
\mbox{if}\;\; \<T,T_i\>=0 \;\; \mbox{then}\;\; \nabla_TT_i\in\Delta_i.
\ee
If $\delta$ is the mean curvature vector field of $\Delta$, then \eqref{nabTT1} and \eqref{TTi} imply that
$$
(I-R_f)\nabla_{T_2}T_2=(I-R_f)\delta+\<\nabla_{T_2}T_2,Z\>(I-R_f)Z=\lambda\xi_f.
$$
This expression shows that $\<\nabla_{T_2}T_2, Z\>$ does not depend on the choice of $T_2$. 
Hence, $\Delta_2$ is an umbilical distribution.
A similar argument, using \eqref{nabTT2}, shows that $\Delta_1$ is also an umbilical distribution. Let $\delta+\delta_i$ denote the mean curvature vector field of $\Delta_i$, $i=1,2$.
It follows from \eqref{derR} that $(I-R_f)\nabla_XT_2=0=(I-R_f)\nabla_YT_2$, and that $R_f\nabla_XT_1=0=R_f\nabla_YT_1$. This, together with \eqref{TTi}, the umbilicity of $\Delta$ and the fact that $\Delta_0$ is totally geodesic imply that $\Delta_0\oplus\Delta_i$ is a totally geodesic distribution for $i=1,2$.

We claim that $\Delta_i$ is a spherical distribution for each $i=1,2$.
The difference of \eqref{derS} for $f$ and $g$, evaluated in $T\in\Gamma(\bar{\Delta})$ and $X$, gives
$$
T((S_f-S_g)(X))-(S_f-S_g)(\nabla_TX)=0.
$$
Using \eqref{nabXTY} and the umbilicity of $\Delta$, we have 
$
T((S_f-S_g)(X))=0.
$
Then, taking the derivative of \eqref{deltaX} in the direction of $T$, we obtain
$
\<\nabla_T\delta,X\>=0.
$
In a similar way we obtain $T((S_f-S_g)(Y))=0$. Taking the derivative of \eqref{deltaY} in the direction of $T$, we see that
$
\<\nabla_T\delta,Y\>=0,
$
which, together with the fact that $\Delta$ is umbilical, implies that 
$$
(\nabla_T\delta)_{\Delta_0}=0
$$
for any $T\in\bar{\Delta}$.
To prove the claim, it suffices to show that $\<\nabla_{T_i}\delta_i,Z\>=0$, $i=1,2$. 
By  \eqref{derS}, evaluated in $T_2$, we have  
$$
S_f(\delta+\delta_2)=\lambda(1-t_f),
$$
and its derivative in the direction of $T_2$ gives
$$
T_2(S_f(\delta+\delta_2))=-\lambda T_2(t_f),
$$
where we have used that $T_2(\lambda)=S_f(T_2)=0$.
On the other hand, by \eqref{derS},
$$
T_2(S_f(\delta+\delta_2))=S_f(\nabla_{T_2}(\delta+\delta_2)).
$$
Since  $(\nabla_{T_2}\delta)_{\Delta_0}=0$, then $S_f(\nabla_{T_2}(\delta+\delta_2))=S_f(\nabla_{T_2}\delta_2)$. It follows from the previous expressions and the fact that $\Delta$ is umbilical that
$$
\<\nabla_{T_2}\delta_2,Z\>S_f(Z)=-\lambda T_2(t_f)=0,
$$
where the last equality follows from \eqref{dert}. This proves that $(\nabla_{T_2}(\delta+\delta_2))_{\Delta_0}=0$, and hence that $\Delta_2$ is a spherical distribution.
Following the same arguments with $T_1\in\Gamma(\Delta_1)$, we see that $\Delta_1$ is also a spherical distribution, which proves the claim.

Summarizing, in any case we have an orthogonal decomposition of $TM$ as  $TM=\Delta_0\oplus\Delta_1\oplus\Delta_2$, where $\Delta_0$ and $\Delta_0\oplus\Delta_i$ are totally geodesic, and $\Delta_i$ is a spherical distribution, $i=1,2$. Here  $\Delta_i$ could be trivial for some $i$, depending on the value of $k$.
It follows from Hiepko's theorem  that (locally) the Riemannian manifold $M^n$ is (isometric to) a warped product
$$
M^n=M_0^3\times_{\rho_1}M^{\ell_1}_1\times_{\rho_2}M^{\ell_2}_2,
$$
where $\ell_1+\ell_2=n-3$, $\ell_i\geq 0$ and $\rho_i\in C^\infty(M_0)$, $i=1,2$.

Observe that the subbundles $\Delta_1\subset \ker(R_f)\cap\ker(R_g)$ and $\Delta_2\subset\ker(I-R_f)\cap\ker(I-R_g)$ and that $\Delta$ is a common eigenspace of $A_f$ and $A_g$. Thus, from Proposition \ref{propmulti} it follows that $f$ and $g$ are doubly rotational hypersurfaces. As before, we can assume that $f$ and $g$ are based on the same extrinsic product of umbilical immersions. Therefore, they are given in terms of isometric profiles. If some $\Delta_i$ is trivial, then $f$ and $g$ are rotational hypersurfaces of spherical or hyperbolic type given in terms of isometric profiles.
\qed

\begin{remark}\emph{We point out that the hypersurfaces in the previous proposition, when seen as hypersurfaces of $\R^{n+1}\setminus\R^{k-1}$ endowed with the metric induced by the conformal diffeomorphism between $\R^{n+1}\setminus\R^{k-1}$ and $\mathbb{H}^k\times \mathbb{S}^{n-k+1}$, are conformally surface-like hypersurfaces. This follows from the facts that the distribution $\Delta^\perp$ is umbilical, and that the umbilicity of $\Delta^\perp$ was shown in \cite{DFT} to characterize conformally surface-like hypersurfaces.}
\end{remark}

\subsubsection[Case lambda not 0 and dimDelta=n-1]{Case $\lambda\neq 0$ and $\dim\Delta=n-1$. }

When $\dim \Delta=n-1$ and $\lambda\neq 0$ for the eigenbundle $\Delta$ and the principal curvature $\lambda$ associated with a truly deformable hypersurface and a true deformation of it, we have the following result.

\begin{proposition}\label{multi1}
Let $f\colon M^n\to \Hy^k\times \Sf^{n-k+1}$, $n\geq 6$, $2\leq k\leq n-1$, be a truly deformable hypersurface that does not split in any open subset. Assume that $f$ admits a true deformation $g\colon M^n\to \Hy^k\times \Sf^{n-k+1}$ such that the common eigenbundle $\Delta$ of $f$ and $g$ and the common principal curvature $\lambda$ of $A_f|_{\Delta}$ and $A_g|_{\Delta}$, given by Proposition \ref{commoneig2}, satisfy  $\lambda\neq 0$ and $\dim \Delta=n-1$ at every point. 
Then the subset where $S_f=S_g$ or $\Delta=\ker S_f$ has empty interior and, on each connected component $\mathcal{V}$ of the open and dense subset where $S_f\neq S_g$ and $\Delta\not \subset\ker S$, one of the following holds:
\begin{itemize}
    \item [(i)]  $k=2$  (respectively, $k=n-1$), and $f$, $g$ are locally rotational hypersurfaces of type $(I)$ (respectivey, type $(II)$, whose profiles are isometric surfaces $f_0,g_0\colon M_0^2\to \mathbb{H}^\ell\times \mathbb{S}^{3-\ell}$, with $\ell=2$ (respectivey, $\ell=1$);

    \item [(ii)] $f$ and $g$ are locally doubly rotational hypersurfaces whose profiles are isometric surfaces $f_0,g_0\colon M_0^2\to \mathbb{H}^\ell\times \mathbb{S}^{3-\ell}$, $\ell\in \{1,2\}$.
\end{itemize}
 
\end{proposition}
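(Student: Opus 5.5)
I would follow the pattern of Propositions \ref{sameS} and \ref{multi2}, now with $\Delta^\perp$ of rank one. Let $X$ be a unit vector field spanning $\Delta^\perp$. Since $A_f|_\Delta=A_g|_\Delta=\lambda I$ and the two shape operators are not equal (otherwise Proposition \ref{cong} and Lemma \ref{12imp3} give congruence), $X$ is a common principal direction, say $A_fX=\mu_f X$ and $A_gX=\mu_g X$, with $\lambda_1=\mu_f-\mu_g\neq0$. By \eqref{reltens3}, $(R_f-R_g)X=r_1X$ with $r_1=-\lambda\lambda_1\neq 0$, while $R_f=R_g$ on $\Delta$. The Codazzi equation \eqref{codazzi} for $f$ and $g$, evaluated on $T,T'\in\Gamma(\Delta)$, gives $S_f(T)=T(\lambda)=S_g(T)$. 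Hence $S_f-S_g$ is a multiple of $X^\flat$.

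\textbf{Step 1: exclude the degenerate cases.} I would first rule out $S_f=S_g$ on an open set, imitating the beginning of Proposition \ref{sameS}. The difference of \eqref{derS} evaluated on $T\in\Gamma(\Delta)$ gives $r_f=r_g$, so $\xi_f=\xi_g\in\ker(R_f-R_g)=\Delta$. The analogue of \eqref{eq:RA1}, namely $t_f(A_f-A_g)=R_fA_f-R_gA_g$, applied to $X$ gives $\lambda_1 t_f X=\mu_fR_fX-\mu_gR_gX$. Hence $X$ is an eigenvector of both $R_f$ and $R_g$. Together with $R_f|_\Delta=R_g|_\Delta$ and the dimension count \eqref{dimkerR}, this forces $R_f=R_g$, a contradiction. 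Next I would rule out $\Delta=\ker S_f$, i.e. $\xi_f$ parallel to $X$. As in part (i) of Proposition \ref{multi2}, the identity $-S_f(\nabla_TT)=\lambda(t_f-\<R_fT,T\>)$ forces $\Delta$ to lie in $\ker R_f$ or in $\ker(I-R_f)$. But $\dim\Delta=n-1$ exceeds both $k$ and $n-k+1$ (since $2\le k\le n-1$), unless the hypersurface splits, which is excluded. So on an open dense set we have $S_f\ne S_g$ and $\Delta\not\subset\ker S_f$.

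\textbf{Step 2: geometry of the distributions.} Differentiating \eqref{derR} for $f$ and $g$ in the same way as \eqref{XXT}--\eqref{deltaX} gives $r_1\<\nabla_XX,T\>=\lambda_1S_f(T)$ and $T(r_1)=0$. It also gives $r_1\<\nabla_TX,T\>=\lambda(S_f-S_g)(X)$, so $\Delta$ is umbilical with mean curvature proportional to $X$. Set $\bar\Delta=\ker S_f\cap\Delta$ (of rank $n-2$), let $Z$ be the unit vector in $\Delta\cap\bar\Delta^\perp$, and set $\Delta_0=\spa\{X,Z\}$, $\Delta_1=\ker R_f\cap\Delta$, $\Delta_2=\ker(I-R_f)\cap\Delta$. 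Repeating the computations of Proposition \ref{multi2}(ii) should show the following: $\bar\Delta$ is $R_f$-invariant; $\Delta_0$ is totally geodesic; $\Delta_0\oplus\Delta_i$ is totally geodesic; and $\Delta_i$ is spherical. The last point uses the derivative along $T$ of $r_1\<\nabla_TX,T\>=\lambda(S_f-S_g)(X)$, together with \eqref{dert}. Hiepko's theorem then gives $M^n=M_0^2\times_{\rho_1}M_1\times_{\rho_2}M_2$.

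\textbf{Step 3: conclusion.} Proposition \ref{propmulti} applies, because $\Delta$ is a common eigenbundle, $\Delta_1\subset\ker R_f\cap\ker R_g$, and $\Delta_2\subset\ker(I-R_f)\cap\ker(I-R_g)$. It yields doubly rotational hypersurfaces, or rotational ones when some $\Delta_i$ is trivial. By \eqref{dimkerR}, $\Delta_1$ is trivial only if $k=2$, and $\Delta_2$ is trivial only if $k=n-1$. As before, the parallels can be made congruent, so the profiles are isometric surfaces in $\Hy^\ell\times\Sf^{3-\ell}$.

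The main obstacle I expect is Step 2, in particular proving that $\Delta_0$ is totally geodesic. The rank-one situation provides fewer equations than the rank-two one. I would first try to replace the missing $Y$-equations by the Gauss equation \eqref{Gauss} restricted to $\Delta$, and by the Codazzi equation \eqref{codazzi} on $X,T$ for the difference $f-g$.
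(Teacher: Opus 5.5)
Your proposal is correct and follows essentially the same route as the paper: you exclude $S_f=S_g$ and $\Delta=\ker S_f$ in the same way, use the same splitting $TM=\Delta_0\oplus\Delta_1\oplus\Delta_2$ with $\Delta_0=\bar\Delta^\perp$, and conclude via Hiepko's theorem and Proposition~\ref{propmulti}. The step you flag as the main obstacle needs no Gauss equation, because $\Delta_0$ is totally geodesic exactly as in Proposition~\ref{multi2}, with $T\in\Gamma(\bar\Delta)$ and $T_i\in\Gamma(\Delta_i)$:
$\<\nabla_XX,T\>=0$ follows from your identity $r_1\<\nabla_XX,T\>=\lambda_1S_f(T)$;
$\<\nabla_XZ,T_i\>=0$ follows from the $T_i$-component of \eqref{derR} on $(X,Z)$, together with the $R_f$-invariance of $\bar\Delta$ and $0<\<R_fZ,Z\><1$;
$\<\nabla_ZZ,T\>=0$ follows from \eqref{derS} on $(Z,T)$;
and $\<\nabla_ZX,T\>=0$ follows from the umbilicity of $\Delta$.
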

\proof 
First, observe that $R_f\neq R_g$. Otherwise, from \eqref{reltens3} we would have $A_f=A_g$, hence $S_f=S_g$ by \eqref{derR}. This would imply that $f$ and $g$ are congruent by Proposition \ref{cong}, a contradiction. 

We claim that $S_f\neq S_g$ (or equivalently, $\xi_f\neq \xi_g$). Assume otherwise. Recall that \eqref{codazzi} implies that $S_f(T)=T(\lambda)=S_g(T)$ for any $T\in\Gamma(\Delta)$.
Let $X\in\Gamma(\Delta^\perp)$ and $T\in\Gamma(\Delta)$ have unit length.
The difference of \eqref{derS} for $f$ and $g$, evaluated in $X$ and $T$, gives
$$
(S_f-S_g)(\nabla_XT)=\lambda_1\<X,R_fT\>,
$$
where $(A_f-A_g)X=\lambda_1X$. Then $\Delta$ is invariant by $R_f$ and $R_g$, or equivalently, $X$ is a common eigenvector of $R_f$ and $R_g$.
From the difference of \eqref{derS} for $f$ and $g$, evaluated in $T$, we obtain
$
\lambda(t_f-t_g)=0.
$
Hence $t_f=t_g$, and therefore $r_f=r_g$. This implies that, at any point $x$, $\xi_f(x)=\xi_g(x)\in\ker(R_f-R_g)(x)=\Delta(x)$.
Then $X$ is an eigenvector corresponding to the eigenvalues $0$ or $1$ of $R_f$ and $R_g$. Since $R_f|_\Delta=R_g|_\Delta$,  from \eqref{dimkerR} we see that $R_fX=R_gX$, which is a contradiction that proves our claim.

 Now we show that $\ker S_f \neq \Delta$. Otherwise, also 
 $\ker S_g=\Delta$. Let $\delta$ be the mean curvature vector field of $\Delta$. From \eqref{derS}, evaluated in $T$, we obtain
$$
S_f(\delta)=\lambda(\<T,R_fT\>-t_f).
$$
Since the left-hand side of the previous equation does not depend on the choice of $T$, then $\Delta$ must be contained in $\ker(I-R_f)$ or in $\ker R_f$. This already implies $k\in\{1,n\}$, which is ruled out by our assumptions.

Thus $\Delta\not\subset\ker S_f$. Denote $\bar{\Delta}=\Delta\cap\ker S_f=\Delta\cap\ker S_g$, and let $Y\in\Gamma(\Delta\cap\bar{\Delta}^\perp)$ have unit length. 
We follow the same arguments as in the proof of Proposition \ref{multi2}. 
For instance, it follows from \eqref{derR} that
$
\<\nabla_XX,T\>=0
$
for any $T\in\Gamma(\bar{\Delta})$. Then, the difference of \eqref{derS} for $f$ and $g$, evaluated in $X$ and $T$, gives $R_f(\bar{\Delta})=R_g(\bar{\Delta})\subset\Delta$. This, together with the fact that $\ker S_f$ and $\ker S_g$ are invariant by $R_f$ and $R_g$, respectively, implies that $\bar{\Delta}$ is invariant by $R_f$ and $R_g$. 
Using this fact, we see from \eqref{derS}, evaluated in $Y$ and $T$, that $\<\nabla_YY,T\>=0$. Define the distribution $\Delta_0=\bar{\Delta}^\perp=\spa\{X,Y\}$. Then, we obtain an orthogonal decomposition
$$
TM=\Delta_0\oplus\bar{\Delta}=\Delta_0\oplus\Delta_1\oplus\Delta_2,
$$
where $\Delta_1=\Delta\cap\ker(R_f)$ and $\Delta_2=\Delta\cap\ker(I-R_f)$. Note that $\dim \Delta_1\geq k-2$ and $\dim \Delta_2\geq n-k-1$. Thus $\Delta_1$ (respectively, $\Delta_2) $ may be trivial if (and only if) $k=2$ (respectively, $k=n-1$). 
Taking the $T_i$-component of $\eqref{derR}$ evaluated in $X$ and $Y$, where $T_i\in\Gamma(\Delta_i)$, $i=1,2$, we see that 
$$
\<R_fY,\nabla_XT_i\>+\<\nabla_XY,R_fT_i\>=0.
$$
Since $\nabla_XT_i$ has no component in the direction of $X$, we have
$$
\<R_fY,Y\>\<\nabla_XT_i,Y\>+\<\nabla_XY,R_fT_i\>=0.
$$
Hence 
$$
\<\nabla_XT_1,Y\>\<R_fY,Y\>=0=\<\nabla_XT_2,Y\>(\<R_fY,Y\>-1).
$$
Taking into account that $0<\<R_fY,Y\><1$, we obtain that $\<\nabla_XT_i,Y\>=0$, $i=1,2$, or equivalently, that $\nabla_XY\in\Gamma(\Delta_0)$.
It follows from \eqref{derS}, evaluated in $Y$ and $T$, that $\nabla_YY\in\Gamma(\Delta_0)$, hence $\Delta_0$ is totally geodesic.

The same arguments used in the proof of Proposition \ref{multi2} show that $\Delta_i$ is a spherical distribution and that $\Delta_0\oplus\Delta_i$ is totally geodesic, $i=1,2$. 
For simplicity, we omit repeating the details.
Therefore, in any case, $M^n$ is (locally) a warped product
$
M^n=M_0^2\times M_1^{\ell_1}\times M_2^{\ell_2}
$,
with $\ell_1+\ell_2=n-2$ and $\ell_i\geq 0$, $i=1,2$.
As before, we have $\Delta_1\subset\ker(R_f)\cap\ker(R_g)$ and $\Delta_2\subset\ker(I-R_f)\cap\ker(I-R_g)$, and they are contained in eigenspaces of $A_f$ and $A_g$. Thus, arguing as before, by Proposition \ref{propmulti}, they are locally doubly rotational hypersurfaces whose profiles are isometric surfaces in $\mathbb{H}^\ell\times \mathbb{S}^{3-\ell}$, $\ell\in \{1,2\}$. 
If $\Delta_1$ (respectively, $\Delta_2$) is trivial, then $k=2$ (respectively, $k=n-1$), and $f$, $g$ are rotational hypersurfaces of type $(I)$ (respectively, type $(II)$)  whose profiles are isometric surfaces in $\mathbb{H}^{2}\times \mathbb{S}^{1}$ (respectively, $\mathbb{H}^{1}\times \mathbb{S}^{2}$.
\qed

\subsection{Proof of Theorem \ref{main}}

The statement of Theorem \ref{main} will follow by putting together the several propositions in the preceding subsection. 

    Let $U_0$ be the interior of the subset of $M^n$ where the one-form $S_f$ vanishes. Then $f$ is locally as in $(i)$ or $(ii)$ in $U_0$ by Proposition \ref{splits}.
    
    Let $W_0$ be the interior of the subset where $\nu_f\leq n-2$, and let $V_0=W_0\cap(M^n\setminus \bar{U}_0)$. 
    By Proposition \ref{commoneig2}, if $g\colon M^n\to \Hy^{k}\times\Sf^{n-k+1}$ is a true deformation of $f$, then $A_f$ and $A_g$  share a common eigenbundle $\Delta$ of rank $n-1$ or $n-2$ on each connected component of an open and dense subset of $V_0$, the shape operators  $A_f$ and $A_g$ share the same (up to sign) eigenvalue $\lambda$ on $\Delta$ and $R_f|_{\Delta}=R_g|_{\Delta}$.
    
    Let $V_1$ and $V_2$ be the open subsets of $V_0$ where $\lambda\neq 0$ and $\dim \Delta=n-2$, respectively.
    By Proposition \ref{ruled}, if $f$ is neither as in $(i)$ nor as in $(ii)$, then it is as in $(iii)$ on $U_1=V_2\cap (V_0\setminus \bar{V}_1)$. 
    On the other hand, it follows from Proposition \ref{multi1} that $f$ is locally as in $(v)$ or $(vii)$ on $U_2=V_1\cap (V_0\setminus \bar{V}_2)$.
    
    Now set $V_3=V_1\cap V_2$, and let
    $W_i$, $1\leq i\leq 3$, be the open subsets of $V_3$ where $S_f\neq S_g$, $\rank (A_f-A_g)=2$ and $\Delta\not \subset \ker S_f$, respectively.  By Proposition \ref{confrul}, 
    $f$ is as in $(iv)$ on $U_3=W_1\cap( V_3\setminus \bar{W}_2)$. Proposition \ref{sameS} shows that  $U_4=(V_3\setminus \bar{W}_1)\cap {W}_3$ is dense on $V_3\setminus \bar{W}_1$, and that $f$ is locally as in $(vi)$ or $(viii)$ with $\ell=2$  on $U_4$.
   Finally, it follows from item $(i)$ of Proposition \ref{multi2} that $f$ is locally as in $(v)$ on $U_5=W_1\cap W_2 \cap (V_3\setminus \bar{W}_3)$, whereas we have from item $(ii)$ in that proposition that $f$ is locally as in $(vi)$ or $(viii)$ on $U_6=W_1\cap W_2 \cap W_3$. 
    
    
      The statement then holds on the open dense subset $\mathcal{U}=\cup_{i=1}^6 U_i$.\qed
\section{Towards the converse of Theorem \ref{main}}\label{Sec:final}
In this final section, we prove some partial results and pose some problems regarding the converse of Theorem \ref{main}. 

   First, it is clear that any hypersurface as in items $(i)$ and $(ii)$ is truly deformable; its true deformations are given by isometric deformations of $f_1\colon N^{k-1}\to \mathbb{H}^k$ (in item $(i)$) or $f_2\colon N^{n-k}\to \mathbb{S}^{n-k+1}$ (in item $(ii)$).  
   
     Proposition \ref{ruledconv} below shows that any simply connected ruled hypersurface is truly deformable, with a large set of true isometric deformations.
     
     \begin{proposition}\label{ruledconv}
Any simply connected ruled hypersurface $f\colon M^n\to \Hy^k\times \Sf^{n-k+1}$, $n\geq 6$, $2\leq k\leq n-1$, with index of relative nullity $n-2$ at any point is truly deformable. Moreover, if it does not split on any open subset, then its true deformations are in one-to-one correspondence with the set of smooth functions over an interval, and all of them are also ruled with the same rulings.
\end{proposition}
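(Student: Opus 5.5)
The plan is to build the deformations directly from the fundamental theorem (Theorem \ref{exist}), by keeping $R$ and $S$ fixed and changing only the shape operator on the orthogonal complement of the rulings. Let $\Delta$ be the relative nullity distribution of $f$, of rank $n-2$, and let $L\supset\Delta$ be the ruling distribution of rank $n-1$. If $f$ splits on an open subset, true deformations come from deformations of the factor (which is then a ruled hypersurface of $\Hy^k$ or $\Sf^{n-k+1}$), so the main case is the non-split one, where $S_f\neq0$.

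\textbf{Step 1: normal form.} Following the computations in the proof of Proposition \ref{ruled}, but run in reverse, I first show $L=\ker S_f$; this follows from \eqref{codazzi} together with the fact that $L$ is totally geodesic and $f|_{L}$ is totally geodesic. Then I take an orthonormal frame $\{X,Y\}$ of $\Delta^\perp$ with $X\perp L$ and $Y\in L$. In this frame,
$$A_f=\begin{bmatrix}\lambda_f&\mu\\ \mu&0\end{bmatrix}\quad\mbox{on }\Delta^\perp,$$
with $\mu\neq0$ because the relative nullity is $n-2$.

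\textbf{Step 2: ansatz.} I look for $g$ with $R_g=R_f$, $S_g=S_f$, and $A_g$ equal to $A_f$ except that $\lambda_f$ is replaced by an unknown function $\lambda_g$.
\begin{itemize}
\item The Gauss equation \eqref{Gauss} holds automatically, because $A_gX\wedge A_gY$ only involves $\det$ of the $2\times2$ block, which is $-\mu^2$ for both.
\item Equation \eqref{derR} reduces to $\<Y,\xi\>A_gX+\<A_gX,Y\>\xi$-type identities. Since $\xi\perp L$, these involve only $\<A_gZ,X\>$ for $Z\in L$, which is unchanged.
\item Equation \eqref{derS2} only needs $(tI-R)(A_g-A_f)=0$. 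Since $A_g-A_f$ is supported on $X\otimes X$, this is implied by $RX=(1-t)X$, which holds because $X$ is parallel to $\xi$.
\item Equation \eqref{dert} is $A_g\xi=A_f\xi$ up to the $X$-component. Here I anticipate trouble: $\grad t=-2A\xi$ changes by $-2(\lambda_g-\lambda_f)\|\xi\|X$. So the ansatz must be modified so that $\<A_g\xi,\xi\>$ is preserved, or $t_g$ must be allowed to differ from $t_f$.
\end{itemize}
I would therefore let $S_g=\pm S_f$ but allow $\mu$ to change instead, i.e. keep $\lambda_f$ and set $A_gY=\mu_g X$. Then the Gauss equation forces $\mu_g^2=\mu^2$. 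That is too rigid, so the correct freedom must come from rotating the frame within $\Delta^\perp$ while keeping $\ker S$. I expect the final correct choice is to keep $\<A\xi,\xi\>$ fixed and vary the off-diagonal entry together with the ruling geometry. Deciding this precisely is the first technical point.

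\textbf{Step 3: Codazzi as an ODE.} Once the ansatz is fixed, the Codazzi equation \eqref{codazzi} for $g$ is checked component by component, reusing the identities $\<\nabla_TX,Y\>=\<\nabla_TX,T\>=\<\nabla_YX,Y\>=\<\nabla_YX,T\>=0$ from the proof of Proposition \ref{ruled}. All components should be automatic except one, which reduces to a first-order linear PDE for the new function along $X$, with the function constant along the leaves of $L$. Because $L$ is totally geodesic of codimension one, simple connectivity gives a global parametrization of the leaf space by an interval. The PDE is then solvable with one arbitrary smooth function on that interval as initial data, and Theorem \ref{exist} produces $g$.

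\textbf{Step 4: trueness and uniqueness.} For non-weak congruence, I use Proposition \ref{prop:null}: a weakly congruent $g$ would force $A_g=\pm A_f+\rho I$, which is incompatible with $A_g|_\Delta=0=A_f|_\Delta$ unless $A_g=\pm A_f$, and that is congruence. For the converse, that every true deformation arises this way, I use Proposition \ref{commoneig2}: any true deformation has $R_g=R_f$ and $\Delta=\ker A_g$. The computation in Proposition \ref{ruled} then gives exactly the normal form of Step 1, hence the same rulings and the same ODE. This yields the one-to-one correspondence with smooth functions on an interval.

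The main obstacle is Step 2, namely getting the correct free entry so that \eqref{dert} and the Gauss equation both hold. The fallback I would try is to keep $S$ and $R$ fixed, allow $\lambda_g$ to vary, and verify that $\<A_f\xi,\xi\>=\lambda_f\|\xi\|^2$ is in fact irrelevant because $\grad t$ is determined by $\xi$-data that is already constrained.
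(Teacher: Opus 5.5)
Your Step 2 never settles on the data $(A_g,R_g,S_g)$, so the existence step is missing. It also cannot be supplied. Your worry about \eqref{dert} is justified, but your other two checks are wrong. Since $X$ spans $(\ker S_f)^\perp$, we have $\xi_f=\pm\|\xi_f\|X$ and $(tI-R)X=(2t-1)X$, which does not vanish in general, so \eqref{derS2} fails. Likewise, the $X$-component of $(\nabla_XR)X$ in \eqref{derR} is $2\langle X,\xi\rangle\langle A_gX,X\rangle$, which changes with $\lambda_g$.

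More decisively, suppose $R_g=R_f$ and $\xi_g=\xi_f=\xi\neq 0$. Subtracting \eqref{derR} for $f$ and $g$ and setting $Y=\xi$ gives, for $D=A_f-A_g$,
\[
\|\xi\|^2DZ+\langle DZ,\xi\rangle\,\xi=0\quad\mbox{for all } Z\in\mathfrak{X}(M).
\]
Pairing with $\xi$ gives $\langle DZ,\xi\rangle=0$, hence $D=0$. So once $S\neq 0$, the pair $(R,S)$ determines $A$ completely. No choice of free entry, off-diagonal term or rotated frame can work while $R$ and $S$ are kept.

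Your fallback is precisely the construction used in the paper: $A'=A_f+\theta A_fJ$ with $\theta A_fJ=\theta\mu\,X\otimes X^\flat$, keeping $R_f$ and $S_f$. There Gauss holds and Codazzi reduces to the ODE \eqref{codrul}, as in your Step 3. However, \eqref{derR}, \eqref{derS2} and \eqref{dert} are only asserted, and they fail because $A'\xi_f\neq A_f\xi_f$.

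Letting $R$ vary does not help either, and here your own Step 4 settles the matter. Since $\langle A_fY,Y\rangle=0$ and $\mu\neq 0$, the block of $A_f$ on $\Delta^\perp$ has determinant $-\mu^2<0$. So $A_f$ has eigenvalue $0$ with multiplicity $n-2$ and two simple eigenvalues of opposite signs. Hence the common eigenbundle of Proposition \ref{commoneig2} is $\ker A_f$ with $\lambda=0$, and \eqref{reltens3} gives $R_g=R_f$ for every true deformation $g$. Then \eqref{eq:pi1} gives $\xi_g=\pm\xi_f$, \eqref{eq:pi2} gives $t_g=t_f$, and the identity above gives $A_g=\pm A_f$. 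By Proposition \ref{cong}, $g$ is then congruent to $f$ on each component of an open dense subset where $S_f\neq 0$.

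The same point is overlooked in the proof of Proposition \ref{ruled}. Besides $A_f=A_g$ on $\ker S_f$, \eqref{dert} gives $A_f\xi_f=A_g\xi_f$, so that case never actually occurs. Therefore, granting Proposition \ref{commoneig2} as you do, a ruled hypersurface with $\nu_f=n-2$, $n\geq 6$, that does not split on any open subset has no true deformations. For such $f$ the statement cannot be proved, by your route or by the paper's. The obstruction disappears only in the split case, where $S_f=0$ and deformations come from the factor.
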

\proof Let $L$ be the totally geodesic distribution of rank $(n-1)$ of $M^n$ whose leaves are mapped by $f$ into totally geodesic submanifolds of $\Hy^k\times \Sf^{n-k+1}$. We may assume that $f$ does not split on any open subset.
By Proposition \ref{umbeig}, the distribution $\Delta=\ker A_f$ is umbilical. Let $\{X,Y\}$ be an orthonormal frame of $\Delta^\perp$ such that $X\in\Gamma(L^\perp)$ and $Y\in\Gamma(L\cap\Delta^\perp)$.
The matrix of $A_f$, restricted to $\Delta^\perp$, with respect to $\{X, Y\}$, is
$$
A_f=\begin{bmatrix}
\lambda & \mu\\
\mu & 0
\end{bmatrix}.
$$
The fact that $L$ is totally geodesic, together with \eqref{codazzi} evaluated in $Y$ and $T\in\Gamma(\Delta)$, gives
$L=\ker S_f$. 
Our next goal is to build a symmetric tensor $A'\in\Gamma(\End(TM))$ distinct from $\pm A_f$  that, together with $R_f$ and $S_f$, satisfies the Gauss and Codazzi equations \eqref{Gauss} and \eqref{codazzi}. Define $J\in\Gamma(\End(TM))$ by $JX=Y$ and $J|_{\ker S_f}=0$.
We look for a tensor of the form
$$
A'=A_f+\theta A_fJ,
$$
where $\theta$, constant along the leaves of $\Delta$, is a function to be determined.
The matrix of $\theta A_fJ$, restricted to $\Delta^\perp$, in terms of $\{X, Y\}$, is
$$
\theta A_fJ=\begin{bmatrix}
\theta\mu & 0\\
0 & 0
\end{bmatrix}.
$$
 Since we are asking $\theta$ to be constant along the leaves of $\Delta$, it follows from \eqref{codazzi} for $A_f$ that \eqref{codazzi} also holds for $A'$ when one of the vector fields is $T\in\Gamma(\Delta)$.
Thus \eqref{codazzi}, for $A'$ and $S_f$, is equivalent to the equation
\be\label{codrul}
Y(\log \theta\mu)=\<\nabla_XX,Y\>.
\ee
If we take an arbitrary function as initial condition along one maximal integral curve of $X$, then there exists a unique function $\theta$ such that $\theta\mu$ is a solution of \eqref{codrul}  and $T(\theta)=0$ for all $T\in\Gamma(\Delta)$.
Hence, for any such initial condition, we obtain a tensor $A'$ that, together with $S_f$, satisfies \eqref{codazzi}.

Now notice that $\theta A_fJ=A'-A_f$ has rank one and vanishes on $\ker S_f$. This, together with $A_fY=\mu X$, implies that \eqref{Gauss} holds for $A'$ and $R_f$.
Therefore, the tensors $A'$, $R_f$ and $S_f$ satisfy all the compatibility conditions \eqref{derR}, \eqref{derS2}, \eqref{dert}, \eqref{Gauss} and \eqref{codazzi}. It follows from Theorem \ref{exist} that there  exists an isometric immersion $g\colon M^n\to  \Hy^k\times\Sf^{n-k+1}$ that is a true deformation of $f$.\vspace{1ex}
\qed

Determining which conformally ruled hypersurfaces are truly deformable proved to be a challenging task. 
Let $f\colon M^n\to\Hy^k\times\Sf^{n-k+1}$ be a conformally ruled hypersurface and let $X\in\mathfrak{X}(M)$ have unit length and be orthogonal to the rulings $L$.
If $f$ has a true deformation $g$, we saw in the proof of Proposition \ref{confrul} that $R_f|_L=R_g|_L$. Hence $R_g$ is completely determined by a function $r_1\in C^\infty(M)$ from the expression $(R_f-r_1I)X=R_gX$.
This function would also determine the tensors $A_g$ and $S_g$ by \eqref{reltens3} and \eqref{YYX}. In the case of Proposition \ref{ruledconv}, the only restriction was equation \ref{codazzi}. However, in this case we have the additional restrictions given by \eqref{derR} and \eqref{derS2}. For instance, following the notation in the proof of Proposition \ref{confrul}, if we evaluate \eqref{derR} in $X$ (in both slots) and $Y$ (in both slots), we obtain the following necessary condition for the existence of a true deformation:
$$
S_f(Y)+\lambda\<\nabla_XX,Y\>-\mu\<\nabla_YX,Y\>=0.
$$
This already points in the direction that not every conformally ruled hypersurface admits a true deformation. 

Let us now focus on the problem of deciding which of the rotational and doubly rotational hypersurfaces of $\Hy^k\times\Sf^{n-k+1}$in Theorem \ref{main} are truly deformable. Let $f\colon M^n\to\Hy^k\times\Sf^{n-k+1}$ be a truly deformable doubly rotational hypersurface with true deformation $g$. We have seen that $g$ is also doubly rotational, and we can assume that it is based on the same extrinsic product of umbilical immersions. We know that their profiles are isometric, but it is not any pair of isometric profiles that produces true deformations. 

Let $f_0, g_0\colon M_0\to \Hy^{\ell_1}\times\Sf^{\ell_2}$, $\ell_1+\ell_2\leq 4$, be the profiles of $f$ and $g$ respectively. 
Fix bases $\{e_i\}_{0\leq i\leq\ell_1}$ and $\{d_j\}_{0\leq j\leq \ell_2}$ of $\Les^{\ell_1+1}$ and $\R^{\ell_2+1}$ such that
$\{e_i\}_{0\leq i\leq\ell_1}$ is orthonormal or pseudo-orthonormal depending on whether we are in the elliptic, hyperbolic or parabolic case and $\{d_j\}_{0\leq j\leq \ell_2}$ is orthonormal.
Then the warping functions of $M=M_0\times_{\rho_1}M_1\times_{\rho_2}M_2$ are given by \eqref{rho1} and \eqref{rho2}. 
This provides the first necessary condition on the profiles, say,
\be\label{isom11}  
\<f_0,e_0\>=\<g_0,e_0\>
\ee
in the elliptic and hyperbolic cases, 
\be\label{isom12}
 \<f_0,e_1\>=\<g_0,e_1\>
\ee
in the parabolic case, and
\be\label{isom2}
\<f_0,d_0\>=\<g_0,d_0\>.
\ee

We also know that $A_f$ and $A_g$ share a common eigenbundle $\Delta$ with respect to a common principal curvature $\lambda$, and that $\Delta_1\oplus\Delta_2\subset \Delta$.
Let $N_{f_0}$ and $N_{g_0}$ denote unit vector fields normal to $f_0$ and $g_0$.
It follows from \eqref{A0}, \eqref{princurv1} and $\eqref{princurv2}$ that $f_0$ and $g_0$ share a common principal direction, with common principal curvature $\lambda$ satisfying
\be \label{lambda1}
\lambda=-\frac{\<N_{f_0},e_0\>}{\<f_0,e_0\>}=-\frac{\<N_{g_0},e_0\>}{\<g_0,e_0\>}=-\frac{\<N_{f_0},d_0\>}{\<f_0,d_0\>}=-\frac{\<N_{g_0},d_0\>}{\<g_0,d_0\>}
\ee
in the elliptic and hyperbolic cases, and
\begin{equation} \label{lambda2}
\lambda=-\frac{\<N_{f_0},e_1\>}{\<f_0,e_1\>}=-\frac{\<N_{g_0},e_1\>}{\<g_0,e_1\>}=-\frac{\<N_{f_0},d_0\>}{\<f_0,d_0\>}=-\frac{\<N_{g_0},d_0\>}{\<g_0,d_0\>}
\end{equation}
in the parabolic case. These are the other necessary conditions on the profiles in order to obtain true deformations.

   Therefore, for a complete classification of the truly deformable hypersurfaces of $\mathbb{H}^k\times \mathbb{S}^{n-k+1}$, $2\leq k\leq n-1$, $n\geq 6$, the following problems should be solved:
   
   \begin{itemize}
   \item[(i)] For which conformally ruled hypersurfaces of dimension $n\geq 6$
of $\mathbb{H}^k\times \mathbb{S}^{n-k+1}$, $2\leq k\leq n-1$, can one construct tensors $A$ and $R$, and a one form $S$ satisfying \eqref{derR}, \eqref{derS2}, \eqref{dert}, \eqref{Gauss} and \eqref{codazzi}?

\item[(ii)] Which hypersurfaces $f_0\colon M_0^3\to \mathbb{H}^\ell\times \mathbb{S}^{4-\ell}$, $\ell\in \{1,2,3\}$ and
$f_0\colon M_0^2\to \mathbb{H}^\ell\times \mathbb{S}^{3-\ell}$, $\ell\in \{1,2\}$ admit isometric deformations satisfying the preceding additional conditions?
\end{itemize}

\noindent Ruy Tojeiro\\
Universidade de S\~ao Paulo\\
Instituto de Ci\^encias Matem\'aticas e de Computa\c c\~ao.\\
Av. Trabalhador S\~ao Carlense 400\\
13566-590 -- S\~ao Carlos\\
BRAZIL\\
\texttt{tojeiro@icmc.usp.br}
\bigskip

\noindent Miguel Ibieta Jimenez\\
Universidade Estadual de Campinas\\
Instituto de Matemática, Estatística e de Computação Científica\\
Rua Sérgio Buarque de Holanda, 651\\
13083-859 -- Campinas\\
BRAZIL\\
\texttt{mibieta@unicamp.br}

\end{document}